\def\spine{1.1in}
\def\dim{\operatorname{dim}}
\def\sp{\operatorname{span}}
\def\theta{\vartheta}
\def\Tr{\operatorname{Tr}}
\def\sgn{\operatorname{sgn}}
\def\adj{\operatorname{adj}}
\numberwithin{equation}{section}
\newtheorem{theorem}{Theorem}[section]
\newtheorem{lemma}[theorem]{Lemma}
\newtheorem{proposition}[theorem]{Proposition}
\newtheorem{corollary}[theorem]{Corollary}
\theoremstyle{definition}
\newtheorem{remark}{Remark}
\title{Optimal Measures for Multivariate Geometric Potentials}
\date{\today}
\author[D. Bilyk]{Dmitriy Bilyk}
\author[D. Ferizovi\'{c}]{Damir Ferizovi\'{c}}
\author[A. Glazyrin]{Alexey Glazyrin}
\author[R. Matzke]{Ryan W. Matzke}
\author[J. Park]{Josiah Park}
\author[O. Vlasiuk]{Oleksandr Vlasiuk}
\address{School of Mathematics, University of Minnesota, Minneapolis, MN 55455} 
\email{dbilyk@math.umn.edu}
\address{Department of Mathematics, Katholieke Universiteit Leuven, Leuven, Belgium} 
\email{damir.ferizovic@kuleuven.be}
\address{School of Mathematical \& Statistical Sciences, The University of Texas Rio Grande Valley, Brownsville, TX 78520}
\email{alexey.glazyrin@utrgv.edu}
\address{Department of Mathematics, Vanderbilt University, Nashville, TN 37240} 
\email{ryan.w.matzke@vanderbilt.edu}
\address{School of Mathematics, Georgia Insititute of Technology, Atlanta, GA 30332}
\email{j.park@gatech.edu}
\address{Department of Mathematics, Vanderbilt University, Nashville, TN, 37240}
\email{oleksandr.vlasiuk@vanderbilt.edu }
\keywords{Potential energy minimization, optimal measures, random polytopes, spherical codes, tight frames, isotropic measures}
\begin{document} 
\maketitle
\begin{abstract}
We study measures and point configurations  optimizing energies based on multivariate potentials. The emphasis is put on potentials defined by geometric characteristics of sets of  points, which serve as multi-input generalizations of the   well-known Riesz potentials for pairwise interaction. One of such potentials is volume squared of the simplex with vertices at the $k \ge 3$ given points: we show that the arising energy is maximized by balanced isotropic measures, in contrast to the classical two-input energy. These results are used to obtain interesting geometric optimality properties of the regular simplex. As the main machinery, we adapt the semidefinite programming method to this context and establish relevant versions of the $k$-point  bounds.
\end{abstract}
\tableofcontents

\section{Introduction}

A variety of problems in many areas of mathematics and science can be formulated as discrete or continuous energy optimization problems for two-point interaction  potentials. The discrete energy  and the continuous energy integral in this setup are defined as
\begin{equation}\label{e.2ener}
\frac{1}{N^2} \sum_{x,y \in \omega_N}  K (x,y) \,\,\, \textup{ or }   \int_\Omega \int_\Omega K(x,y) \, d\mu (x) \, d\mu (y),
\end{equation}
where $K: \Omega \times \Omega \rightarrow \mathbb R$ is a potential function. In the former case, the energy is determined for discrete sets $\omega_N$ of $N$ points in $\Omega$. In the latter case, it  is determined for probability measures $\mu$ on the domain $\Omega$.  For $\Omega \subset \mathbb R^d$, undoubtedly, one of the most well studied energies of this type are the Riesz energies with the kernels $K(x,y) = \| x-y \|^s$ (diagonal terms need to be dropped  in the discrete case for $s<0$). We refer the reader to \cite{BHS} for an excellent exposition of the subject. 

However, numerous applications (e.g. Menger curvature \cite{MMV}, $U$-statistics \cite{L,V}, $k$-point bounds \cite{BV, DMOV, Mu, CW}, three-nucleon force in physics \cite{Z}, etc) call for energies that depend on interactions of triples or $k$-tuples of particles, rather than just pairwise interactions, i.e.\ energies of the type
\begin{align}\label{e.nener}
E_K (\omega_N) &  = \frac{1}{N^k} \sum_{ z_1,\dots,z_k   \in \omega_N}  K (z_1,\ldots,z_k), \\  
\label{e.neneri} I_K (\mu) & =   \int_\Omega \dots \int_\Omega K (x_1,\dots,x_k) \, d\mu (x_1) \,\dots \,  d\mu (x_k),
\end{align}
with $k\ge 3$. The question of interest is  finding  point configurations and measures optimizing such energies.\\

Continuing the general study initiated in \cite{BFGMPV1}, in this paper  we study multivariate potentials that are determined by geometric characteristics of  sets of $k$ points in $\mathbb R^d$ and, at the same time, serve as generalizations of classical pairwise potentials ubiquitous in the literature, in particular, the aforementioned Riesz potentials.  There are two main classes of such potentials which we investigate here. \\

\noindent {\bf The potential $V$: volume of the parallelepiped spanned by $k$-tuples of vectors.} Let $2\leq k\leq d$ and define the $k$-input kernel $V(x_1,\ldots, x_k)$ as the $k$-dimensional volume of the parallelepiped whose vertices are the points $x_1$, $\ldots$, $x_k$ and the origin. In other words, the parallelepiped spanned by the {\emph{vectors}}   $x_1$, $\ldots$, $x_k$ (or, equivalently, $V$ is the volume of the simplex spanned by these $k$ vectors, scaled by a factor of $k!$). Note that $V^2$ is the determinant of the Gram matrix of the set of vectors $\{x_1,\ldots, x_k\}$. \\

\noindent {\bf The potential $A$: volume (area) of the simplex spanned by $k$-tuples of points.} For $2\leq k\leq d+1$, define the $k$-input kernel $A(x_1,\ldots, x_k)$ as the $(k-1)$-dimensional volume of the simplex whose vertices are the points $x_1$, $\ldots$, $x_k$.  Similarly to $V^2$, the potential $A^2$ can be represented as a determinant of a matrix based on scalar products of the set of vectors $\{x_1,\ldots, x_k\}$, see Lemma \ref{lem:A-formula}.\\ 

Observe that in the case $k=3$, $V$ is simply the three-dimensional {\emph{volume}}  of the parallelepiped  spanned by the vectors $x_1$,  $x_2$, $x_3$, while $A$ is the {\emph{area}} of the triangle with vertices at the points $x_1$,  $x_2$, $x_3$. This explains the notation chosen for these potentials. The three-input case of the potentials will be the focus of Section \ref{sec:SD}.  Note also that for $k=2$, $A(x,y) = \| x -y\|$, so the potentials $A^s$ are direct multivariate generalizations of the Riesz potentials. This was also remarked upon in \cite{KPS}, where the authors studied the gradient flow of $A^2$ as a generalization of the linear consensus model. The  two-input case for both of these potentials is discussed in Section \ref{sec:k=2}. 
\\

In direct analogy to the Riesz energies, we shall study multi-input energies with kernels given by powers of these potentials: $A^{s}$ and $V^s$ (in this paper, the powers are mostly assumed to be positive, i.e. $s>0$). Due to the nature of these potentials for $s>0$, one is generally interested in measures and point configurations that {\emph{maximize}} (rather than minimize) the corresponding energies. The geometric setting will be primarily restricted to the case when the domain $\Omega$ is the unit sphere $\mathbb S^{d-1}$, as well as $\Omega= \mathbb R^d$ with certain moment restrictions on the underlying probability measures \eqref{eq:moment}. In the former case, one of the most natural questions is whether the normalized uniform surface measure  $\sigma$ on the sphere $\mathbb S^{d-1}$ maximize the energies $I_{A^s} (\mu)$ and $I_{V^s} (\mu)$.

These questions can be reformulated in probabilistic terms as follows: assume that $k$ random points/vectors are chosen on the sphere $\mathbb S^{d-1}$ independently according to the probability distribution $\mu$. Which probability distribution $\mu$ maximizes the expected $s^{th}$ power of the volume of the parallelepiped spanned by the vectors (or, respectively, of the volume of the simplex spanned by the random points)? Is the uniform distribution $\sigma$ optimal?  \\

The case $s=2$ appears to be more manageable than others, since, as mentioned above, both $V^2$ and $A^2$ can be expressed as polynomials. In fact, it has been already shown by Cahill and Casazza \cite{CC} (see also Theorem \ref{thm:volume-gen} below) that $I_{V^2}$ is maximized by isotropic measures on the sphere (see \eqref{eq:isotropic_init} for the definition), which includes $\sigma$. Based on this result  we show that $I_{V^s}$ is maximized by the discrete measure uniformly distributed over the vertices of an orthonormal basis when $s>2$ (Corollary \ref{cor:V^s for s>2}).  Other main results of the present paper concerning multivariate geometric potentials  include:

\begin{itemize}
\item  The  maximizers of the energy $I_{A^2}(\mu)$ on the sphere $\mathbb S^{d-1}$    are exactly the balanced isotropic measures (which includes the uniform surface measure $\sigma$, see Section \ref{sec:not} for the relevant definitions). This is proved in Theorem \ref{thm:area-gen} in full generality (for all $d\ge 2$ and $3\le k \le d+1$), but different proofs of partial cases are also given in Theorem \ref{thm: triangle area squared max} (the case of $k=3$ inputs, i.e. area squared of a triangle) and Theorem \ref{thm:A^2maxGen} ($k=d+1$ inputs in dimension $d\ge 2$, i.e. a full-dimensional simplex; this theorem also applies to measures on $\mathbb R^d$ with unit second moment). 
\item When $k=d+1$ and $s>2$, the energy $I_{A^s}(\mu)$ is maximized by the discrete measure uniformly distributed over the vertices of a regular  $d$-dimensional simplex 
(Corollary \ref{cor:A^s for s>2}). 
\item For $0<s\le 2$, the discrete energies $E_{V^s}$ and $E_{A^s}$ with $N=d+1$ points are maximized by the vertices of a regular $d$-dimensional simplex, see Corollary \ref{cor:Simplex Best A, V}.  As a corollary, a regular $d$-dimensional simplex maximizes the sum of volumes of $j$-dimensional faces ($1\le j \le d$) among all simplices of a given circumradius (Corollary \ref{cor:Geometric Simplex is best}). 
\end{itemize}
\noindent For more precise technical statements of these results the reader is directed to the theorems and corollaries  referenced above. \\

The case $s=2$ is also special due to the fact that in the classical two-input setting  this is exactly the phase transition for the Riesz energy on the sphere, which is maximized uniquely by the uniform surface measure $\sigma$ for $0<s<2$ and by discrete measures for $s>2$ \cite{Bj} (see also Proposition \ref{prop:k=2} below).  Some of our main results suggest that similar behavior persists in the multivariate case, although the case $0<s<2$ (including the very natural $s=1$) remains out of reach. We conjecture that the uniform surface  measure $\sigma$ maximizes both $I_{A^s} (\mu)$ and $I_{V^s} (\mu)$  when $0<s<2$. \\

The main machinery for our optimization results is a variant of the semidefinite programming method. We adapt the method developed by Bachoc and Vallentin for finding three-point packing bounds for spherical codes \cite{BV}. Three-point bounds were  also applied to energy optimization for pair potentials in \cite{CW} and for multivariate $p$-frame energy in \cite{BFGMPV2}.  The approach of Bachoc and Vallentin later was generalized by Musin \cite{Mu} who established the $k$-point version of packing bounds. This method is actively utilized for solving packing/energy problems (see, e.g. \cite{DMOV, DDM}) but its applicability is typically limited due to complexity of actual semidefinite programs. Our paper seems to be the first one where general $k$-point bounds are explicitly used for all positive integer $k$.\\

The paper is organized as follows.  Section \ref{sec:not}   describes the relevant background, definitions, notation, and covers the two-input case of the energies.   Section \ref{sec:Discrete A and V}  presents the applications of our main results to some geometric  optimality properties of the regular simplex. 
In Section \ref{sec:SD} we discuss  the semidefinite programming approach  of \cite{BV} and demonstrate  how it leads to optimization results for $3$-input energies with geometric kernels. 
Section \ref{sec:vol} shows how the known results about $I_{V^2}$ \cite{CC} can be used to obtain partial results for $I_{A^2}$, as well as the discreteness of maximizers  for $I_{V^s}$ and $I_{A^s}$ with $s>2$. 
In Section \ref{sec:k-point} we provide a self-contained description of $k$-point semidefinite bounds for the sphere and give a  general construction of $k$-positive definite multivariate functions based on these bounds. Finally, in  the main result of Section \ref{sec:A^2 on Sphere}, we use multivariate functions from Section \ref{sec:k-point} to prove that the energy $I_{A^2}$ based on the squared volume of a simplex is maximized by balanced isotropic measures on the sphere. In the Appendix (Section \ref{sec:Appen A^2}) we give an explicit expression for the potential $A^2$.   

\section{Background and notation}\label{sec:not}

The notation in this paper generally follows \cite{BFGMPV1}. Most of the optimization problems, with a few exceptions, will be formulated for measures or finite configurations of points on the unit Euclidean sphere $\mathbb{S}^{d-1}$. Often the potentials will be invariant under the action changing an argument to its opposite. Essentially, this means that the underlying space is the real projective space $\mathbb{RP}^{d-1}$, but we will still formulate our results in terms of the unit sphere.

In what follows,  the domain $\Omega$ is either the sphere $\mathbb S^{d-1}$ or the Euclidean space $\mathbb R^d$. 
Assume  $ k \in \mathbb{N}\setminus\{1\}$ is the number of inputs and the kernel $K: \Omega^k \rightarrow \mathbb{R}$ is continuous. We denote by $\mathcal{M}(\Omega)$ the set of finite signed Borel measures on $\Omega$, and by $\mathcal{P}(\Omega)$ the set of Borel probability measures on $\Omega$. If $\Omega = \mathbb{R}^d$, we define $\mathcal{P}^*(\mathbb R^d)$ to be the set of  Borel probability measures $\mu$  on $\mathbb R^d$ satisfying
\begin{equation}\label{eq:moment}
\int_{\mathbb R^d}  \| x \|^2  d \mu(x) = 1.
\end{equation}
Observe that, by a slight abuse of notation, $\mathcal{P}(\mathbb S^{d-1}) \subset \mathcal{P}^*(\mathbb R^d)$ 

Let $\omega_N = \{ z_1, z_2, \ldots, z_N\}$ be an $N$-point configuration (multiset) in $\Omega$, for $N \geq k$. Then the discrete $K$-energy of $\omega_N$ is defined to be 
\begin{equation*}\label{eq:DiscreteEnergyDef}
E_K(\omega_N) :=  \frac{1}{N^k} \sum_{j_1=1}^{N} \cdots \sum_{j_k = 1}^{N} K(z_{j_1}, \ldots, z_{j_k}).
\end{equation*}
Similarly, we define the  energy integral  for measures on $\Omega$: for $\mu \in \mathcal{M}(\Omega)$, \begin{equation*}\label{eq:ContEnergyDef}
I_K(\mu ) = \int_{ \Omega} \cdots \int_{\Omega} K(x_1, \ldots, x_k) \,d\mu(x_1) \cdots  d\mu(x_k),
\end{equation*} 
when absolutely convergent, as will be the case in all of the contexts considered below.
In the present paper we shall be interested in finding probability measures ($\mu \in \mathcal{P}(\mathbb S^{d-1} ) $ or $\mu \in \mathcal{P}^*(\mathbb R^d)$) which optimize (in most cases, maximize) the energy integrals $I_K$.

\subsection{Isotropic measures and frame energy} 
The \textit{$p$-frame potential} is defined as $|\langle x,y \rangle|^p$. The notion of the $2$-frame potential, or simply \textit{frame potential}, was introduced by Benedetto and Fickus \cite{BF}, and later generalized to $p \in (0, \infty)$ by Ehler and Okoudjou \cite{EO}.  Minimization of the frame energy is well understood: the following lemma is usually stated for $\mu \in \mathcal P (\mathbb S^{d-1})$, see e.g. Theorem 4.10 in \cite{BF}, but the extension to $ \mathcal{P}^*(\mathbb R^d)$ is straightforward (see also Remark \ref{rem:proj} below).

\begin{lemma}\label{lem:frame}
For 
any {$\mu\in \mathcal{P}^*(\mathbb R^d)$}, and hence also any $\mu \in \mathcal P (\mathbb S^{d-1})$,
\begin{equation*}
\int_{ \Omega} \int_{\Omega} \langle x,y \rangle^2\, d\mu(x) d\mu(y) \geq \frac 1 d.
\end{equation*}
\end{lemma}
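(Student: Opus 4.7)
The plan is to reduce the double integral to a trace computation involving the second moment matrix of $\mu$, and then apply the Cauchy--Schwarz inequality (or equivalently, the power mean inequality) to its eigenvalues.

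First, I would define the $d \times d$ symmetric positive semidefinite matrix
\[
M_\mu := \int_{\mathbb R^d} x x^T \, d\mu(x).
\]
Using the identity $\langle x, y \rangle^2 = \operatorname{Tr}(x x^T y y^T)$ and Fubini, I would rewrite
\[
\int_{\mathbb R^d} \int_{\mathbb R^d} \langle x, y \rangle^2 \, d\mu(x) \, d\mu(y) = \operatorname{Tr}\!\left( M_\mu^2 \right).
\]
The moment normalization $\int \|x\|^2 \, d\mu(x) = 1$ translates into the trace condition $\operatorname{Tr}(M_\mu) = 1$.

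Next, let $\lambda_1, \ldots, \lambda_d \ge 0$ denote the eigenvalues of $M_\mu$. Then $\sum_i \lambda_i = 1$ and $\operatorname{Tr}(M_\mu^2) = \sum_i \lambda_i^2$. The Cauchy--Schwarz inequality gives
\[
\sum_{i=1}^d \lambda_i^2 \;\geq\; \frac{1}{d}\!\left( \sum_{i=1}^d \lambda_i \right)^2 = \frac{1}{d},
\]
which yields the desired bound. Equality is attained precisely when all eigenvalues of $M_\mu$ are equal, i.e.\ $M_\mu = \tfrac{1}{d} I_d$, which is the isotropic condition on $\mu$ referred to later in the paper.

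I do not anticipate any serious obstacle here. The only minor point requiring care is the justification of exchanging sum/integral and trace, which follows from the absolute convergence guaranteed by $\int \|x\|^2 \, d\mu(x) = 1$ together with $|\langle x, y \rangle|^2 \le \|x\|^2 \|y\|^2$ and Fubini's theorem. The inclusion $\mathcal{P}(\mathbb S^{d-1}) \subset \mathcal{P}^*(\mathbb R^d)$ noted earlier means the statement for measures on the sphere is an immediate consequence of the $\mathbb R^d$ version.
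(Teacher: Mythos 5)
Your proof is correct and complete. The paper itself does not write out a proof: it cites Benedetto--Fickus and later observes that the bound follows from the positive definiteness of the Gegenbauer polynomial $P_2^d(t)=\frac{dt^2-1}{d-1}$, since $\int\int P_2^d(\langle x,y\rangle)\,d\mu\,d\mu\geq 0$ rearranges to $\int\int\langle x,y\rangle^2\,d\mu\,d\mu\geq \frac1d$ for $\mu\in\mathcal P(\mathbb S^{d-1})$; the extension to $\mathcal P^*(\mathbb R^d)$ is then handled by the degree-$2$ homogeneity and the projection $\pi$ of Remark \ref{rem:proj}. Your route via the second moment matrix $M_\mu$, the identity $\int\int\langle x,y\rangle^2\,d\mu\,d\mu=\operatorname{Tr}(M_\mu^2)$, and Cauchy--Schwarz on the eigenvalues is genuinely different in mechanism: it is self-contained, works directly on $\mathbb R^d$ under the normalization $\operatorname{Tr}(M_\mu)=1$ without any reduction to the sphere, and delivers the equality characterization $M_\mu=\frac1d I_d$ (the isotropy condition \eqref{eq:isotropic_init}) as an immediate byproduct, which the paper needs and states separately. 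What the Gegenbauer route buys instead is that it exhibits the frame potential as the simplest instance of the linear programming framework the paper goes on to generalize; your argument, being a trace computation, does not generalize in that direction, but as a proof of this lemma it is arguably cleaner. Your remarks on absolute convergence and on the inclusion $\mathcal P(\mathbb S^{d-1})\subset\mathcal P^*(\mathbb R^d)$ are both appropriate and correct.
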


It is easy to see that the equality in the estimate above is achieved precisely for the measures which  satisfy
\begin{equation}\label{eq:isotropic_init}
\int_{ \Omega} x x^T \,  d\mu(x)=\frac 1 d I_d,
\end{equation}
where $I_d$ is the  $d\times d$ identity  matrix. It will be convenient for us to use this condition in the following form: for any $y\in\mathbb{S}^{d-1}$,
\begin{equation}\label{eq:isotropic}
\int_{ \Omega} \langle x, y\rangle^2\,  d\mu(x)=\frac 1 d.
\end{equation}
Measures which satisfy \eqref{eq:isotropic_init} or, equivalently, \eqref{eq:isotropic}, are called \textit{isotropic}.  We note that $\operatorname{Tr} (x x^T)=\|x\|^2$ and (\ref{eq:isotropic_init}) implies $\int_{\Omega}  \| x \|^2  d \mu(x) = 1$ so, as a matter of fact, all isotropic measures on $\mathbb{R}^d$ automatically belong to $\mathcal{P}^*(\mathbb{R}^d)$.

The discrete version of Lemma \ref{lem:frame} states  that for $N \geq d$ and $\{ x_1,\dots,x_N\} \subset \mathbb S^{d-1}$,
\begin{equation*}
\sum_{i=1}^{N}\sum_{j = 1}^{N} \langle x_i, x_j \rangle^2\geq \frac {N^2} d.
\end{equation*}
Discrete sets for which this bound is sharp are known as \textit{unit norm tight frames}, which explains the term  \emph{frame energy}. The lower bound  for the discrete frame energy is a special case of bounds by Welch \cite{W} and Sidelnikov \cite{Sid}.

There is a natural projection $\pi:\mathcal{P}^*(\mathbb{R}^d)\rightarrow \mathcal{P}(\mathbb{S}^{d-1})$ that maps isotropic measures in $\mathbb{R}^d$ onto isotropic measures in $\mathbb{S}^{d-1}$. First, we define the projection $\pi_0:\mathbb{R}^d\setminus\{0\}\rightarrow\mathbb{S}^{d-1}$ by $\pi_0(x)=x/\|x\|$. Now for any $\mu\in \mathcal{P}^*(\mathbb{R}^d)$, we define $\mu^*=\pi(\mu)$ as {the pushforward measure $ (\pi_0)_\# \|x\|^2\, d\mu(x) $, that is}: for any Borel subset $B$ of $\mathbb{S}^{d-1}$, we set
$$\mu^*(B)=\int_{\pi_0^{-1}(B)} \|x\|^2\, d\mu(x).$$
Clearly, $\mu^*$ is a Borel probability measure on $\mathbb{S}^{d-1}$. Checking (\ref{eq:isotropic}), we can also see that for an isotropic $\mu$, $\pi(\mu)$ is isotropic too.

\begin{remark}\label{rem:proj}
For potentials $K$ that are homogeneous of degree $2$ in each variable, the energy $I_K(\mu)$ is invariant under the projection $\pi$. The kernel $V^2$ is such a function, since it is the determinant of the Gram matrix of $\{ x_1,\dots,x_N\}$. This property is also satisfied by the frame potential $K(x,y) = \langle x,y \rangle^2$.  In such cases, it is sufficient to find optimizers for probability  measures on the sphere in order to solve an optimization problem in $\mathcal{P}^*(\mathbb{R}^d)$.
\end{remark}

We call a measure $\mu$ \textit{balanced} if $\int_{\Omega} x\, d\mu(x) = 0$, i.e. the center of mass is at the origin. Balanced isotropic measures can be used to construct isotropic measures in higher dimensions, as will be seen in the proof of  Theorem~\ref{thm:A^2maxGen}.

\subsection{Linear programming and positive definite kernels}
The linear programming method, developed for the spherical case in \cite{DGS},  appeared to be successful in finding optimizing measures and point configurations as well as in giving lower bounds for two-point interaction energies  (see, e.g., \cite{BGMPV1,CK, Y}). Here we briefly describe how it works. In Sections \ref{sec:SD} and \ref{sec:k-point}, we explain in more detail how the method is extended to semidefinite bounds for $k$-point energies.

A symmetric kernel $K: ({\mathbb S^{d-1}})^2 \rightarrow \mathbb{R}$ is called \textit{positive definite} if for every $\nu \in \mathcal{M}(\mathbb S^{d-1})$, the energy integral satisfies  $I_K(\nu) \geq 0$. A classical theorem of Schoenberg described positive definite kernels via Gegenbauer polynomials \cite{Sch}. The Gegenbauer polynomials $P_m^d$, $ m\geq 0 $, form an orthogonal basis on $[-1,1]$ with respect to the measure $(1-t^2)^{\frac{d-3}2}dt$. Here, $P_m^d$ is normalized so that $P_m^d(1) = 1$. All continuous functions on $[-1,1]$ can be expanded like so:
\begin{equation}\label{eq:GegenbauerExpansion}
f(t)=\sum_{m=0}^{\infty} \hat{f}_m P_m^d(t),
\end{equation}
where the sum converges uniformly and absolutely if $K(x,y) = f( \langle x, y \rangle)$ is positive definite on $\mathbb{S}^{d-1}$ (due to Mercer's Theorem). Rotationally-invariant positive definite kernels on the sphere are exactly {characterized} by the positivity of their Gegenbauer coefficients.

\begin{theorem}[Schoenberg \cite{Sch}]\label{thm:sch}
The kernel $K(x,y)=f(\langle x,y \rangle)$ is positive definite on $\mathbb{S}^{d-1}$ if and only if all coefficients $\hat{f}_m$ of the Gegenbauer expansion \eqref{eq:GegenbauerExpansion}  are non-negative.
\end{theorem}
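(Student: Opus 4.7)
The proof rests on the classical \emph{addition formula} for Gegenbauer polynomials. Let $\mathcal{H}_m \subset L^2(\sigma)$ denote the space of real spherical harmonics of degree $m$ on $\mathbb{S}^{d-1}$, set $h_m := \dim \mathcal{H}_m$, and fix any $L^2(\sigma)$-orthonormal basis $\{Y_{m,k}\}_{k=1}^{h_m}$. With the normalization $P_m^d(1)=1$ used in the excerpt, the addition formula reads
\begin{equation*}
P_m^d(\langle x, y\rangle) \;=\; \frac{1}{h_m}\sum_{k=1}^{h_m} Y_{m,k}(x)\,Y_{m,k}(y).
\end{equation*}
This single identity does essentially all the work: for any $\nu \in \mathcal{M}(\mathbb{S}^{d-1})$ it immediately yields
\begin{equation*}
\int\!\!\int P_m^d(\langle x,y\rangle)\,d\nu(x)\,d\nu(y) \;=\; \frac{1}{h_m}\sum_{k=1}^{h_m}\left(\int Y_{m,k}\,d\nu\right)^{\!2} \;\ge\; 0,
\end{equation*}
so each kernel $P_m^d(\langle\cdot,\cdot\rangle)$ is itself positive definite.

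For sufficiency, I would substitute the Gegenbauer expansion \eqref{eq:GegenbauerExpansion} into $I_K(\nu)$ and interchange sum and integral to obtain
\begin{equation*}
I_K(\nu) \;=\; \sum_{m=0}^{\infty} \frac{\hat{f}_m}{h_m}\sum_{k=1}^{h_m}\left(\int Y_{m,k}\,d\nu\right)^{\!2},
\end{equation*}
a sum of non-negative terms under the hypothesis $\hat{f}_m \ge 0$. For necessity, I would plug in the signed test measures $d\nu_{m,k} := Y_{m,k}\,d\sigma$: the $L^2(\sigma)$-orthonormality of $\{Y_{n,j}\}$ collapses the double integral against $\sum_n \hat{f}_n P_n^d(\langle x,y\rangle)$ down to a single surviving term, giving
\begin{equation*}
I_K(\nu_{m,k}) \;=\; \frac{\hat{f}_m}{h_m},
\end{equation*}
and the assumed positive definiteness of $K$ then forces $\hat{f}_m \ge 0$.

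The one technical point that needs care is the termwise integration in the sufficiency direction, since $f$ is only assumed continuous and its Gegenbauer series need not converge uniformly a priori. The slickest way around this is to apply Tonelli's theorem to the doubly-indexed non-negative sum obtained after inserting the addition formula, which bypasses the need for uniform convergence of the original series; alternatively one may invoke Mercer's theorem, cited in the preceding paragraph of the excerpt. The necessity direction poses no convergence issue, because $L^2$-orthogonality leaves only one non-vanishing spherical-harmonic cross-term after integration, reducing the calculation to a single term.
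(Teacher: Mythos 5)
The paper does not actually prove Theorem \ref{thm:sch}; it is quoted as a classical result of Schoenberg with a citation, so there is no in-paper argument to compare against. Your proposal is the standard proof, and its skeleton is sound: the addition formula $P_m^d(\langle x,y\rangle)=\frac{1}{h_m}\sum_k Y_{m,k}(x)Y_{m,k}(y)$ (correct with your normalization, since both sides equal $1$ on the diagonal after averaging) gives positive definiteness of each Gegenbauer kernel, termwise integration gives sufficiency, and testing against $d\nu_{m,k}=Y_{m,k}\,d\sigma$ gives necessity. The necessity step is indeed safe: either argue via the Funk--Hecke formula, which expresses $I_K(\nu_{m,k})$ as a positive multiple of $\hat f_m$ without ever expanding $f$, or note that the Gegenbauer series of a continuous $f$ converges in $L^2$ of the weight, which suffices to integrate term by term against the bounded kernel $Y_{m,k}(x)Y_{m,k}(y)$.

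The one step that does not work as written is your Tonelli justification of the interchange in the sufficiency direction. Tonelli requires the summands to be non-negative \emph{as integrands}, but the terms $\frac{\hat f_m}{h_m}Y_{m,k}(x)Y_{m,k}(y)$ change sign and are integrated against the signed product measure $\nu\times\nu$; they only become non-negative \emph{after} the integration you are trying to justify. Your fallback to Mercer is the right fix, but it needs one more sentence to avoid circularity: Mercer's theorem requires the kernel to be positive semidefinite as an integral operator on $L^2(\sigma)$, and this weaker hypothesis already follows from $\hat f_m\ge 0$ via Funk--Hecke (the $\hat f_m$ are, up to positive constants, the eigenvalues of the operator), so Mercer then yields uniform absolute convergence of the expansion and the termwise integration is immediate. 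An equally classical alternative is to show $\sum_m \hat f_m<\infty$ directly: the Poisson (Abel) means of the Gegenbauer expansion form a positive summability kernel, so $\sum_m \hat f_m r^m\to f(1)$ as $r\to 1^-$, and monotone convergence with $\hat f_m\ge 0$ gives $\sum_m\hat f_m=f(1)<\infty$; since $|P_m^d(t)|\le P_m^d(1)=1$ the series then converges uniformly and absolutely. With either repair the proof is complete.
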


More background on Gegenbauer polynomials, energy, and positive definite kernels on the sphere can be found in \cite{AH, BHS}.

If one  can bound a given function $f$ from below by a positive definite (modulo a constant) function $h$, usually a polynomial,  then the  linear programming bounds on the energy of $f$ are then essentially consequences of the inequalities
$$\int_{\mathbb{S}^{d-1}}\int_{\mathbb{S}^{d-1}} P_m^d(\langle x,y \rangle) \,d\mu(x) d\mu(y)\geq 0.$$
For example, $P_2^d(t)=\frac {dt^2-1} {d-1}$ and the inequality above immediately implies the lower bound in Lemma \ref{lem:frame}. \\

\subsection{$k$-positive definite kernels} As an extension of the notion of positive definite kernels to the multivariate case, we define \textit{$k$-positive definite kernels}. Let $K: (\mathbb S^{d-1})^k \rightarrow \mathbb{R}$ be continuous and symmetric in the first two variables. We define the \textit{potential function of $K$ for fixed $z_3, \ldots, z_k$} as
\begin{equation}\label{eq:PotentialFuncDef}
U_{K}^{z_3, \ldots, z_{k}}(x,y):= K(x,y, z_3, \ldots, z_k).
\end{equation} 
We call $K$ $k$-positive definite if for any $z_3, \ldots, z_k \in \mathbb S^{d-1}$,  the   potential function $U_{K}^{z_3, \ldots, z_{k}}(x,y)$ is positive definite as a function of $x$ and $y$. For kernels symmetric in all variables, this definition is the same as the one given in \cite{BFGMPV1}. A kernel $Y$ is $k$-negative definite if $-Y$ is $k$-positive definite. In Section \ref{sec:k-point} we provide a self-contained construction of large classes of $k$-positive definite kernels for $\mathbb{S}^{d-1}$. Here we collect some general results about positive definiteness and energy minimization for multivariate kernels.

\begin{lemma}\label{lem:Schur's Lemma}
Suppose that $K_1, K_2, \ldots$ are $k$-positive definite. Then $K_1 + K_2$ and $K_1 K_2$ are $k$-positive definite. If the sequence of $K_j$'s converges (uniformly in the first two variables and pointwise in the others) to a kernel $K$, then $K$ is also $k$-positive definite.
\end{lemma}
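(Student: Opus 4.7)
The plan is to fix an arbitrary tuple $(z_3,\ldots,z_k) \in (\mathbb{S}^{d-1})^{k-2}$ in each case and reduce to the corresponding classical statement for bivariate positive definite kernels on $\mathbb{S}^{d-1}$. The key observation that makes this reduction immediate is that the operations of forming potential functions and of pointwise addition/multiplication/limit commute: by \eqref{eq:PotentialFuncDef},
\begin{equation*}
U_{K_1+K_2}^{z_3,\ldots,z_k} = U_{K_1}^{z_3,\ldots,z_k} + U_{K_2}^{z_3,\ldots,z_k}, \qquad U_{K_1 K_2}^{z_3,\ldots,z_k} = U_{K_1}^{z_3,\ldots,z_k}\cdot U_{K_2}^{z_3,\ldots,z_k},
\end{equation*}
and uniform convergence of $K_j$ in the first two variables (pointwise in the rest) gives uniform convergence of $U_{K_j}^{z_3,\ldots,z_k}$ to $U_K^{z_3,\ldots,z_k}$ on $(\mathbb{S}^{d-1})^2$ for each frozen tail.

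For the sum, linearity of the energy integral in the kernel yields $I_{U_{K_1+K_2}^{z_3,\ldots,z_k}}(\nu) = I_{U_{K_1}^{z_3,\ldots,z_k}}(\nu) + I_{U_{K_2}^{z_3,\ldots,z_k}}(\nu) \geq 0$ for every $\nu \in \mathcal{M}(\mathbb{S}^{d-1})$, so $U_{K_1+K_2}^{z_3,\ldots,z_k}$ is positive definite. For the limit, uniform convergence lets one exchange limit and double integral: for $\nu \in \mathcal{M}(\mathbb{S}^{d-1})$ of finite total variation $\|\nu\|$,
\begin{equation*}
\bigl| I_{U_K^{z_3,\ldots,z_k}}(\nu) - I_{U_{K_j}^{z_3,\ldots,z_k}}(\nu) \bigr| \leq \bigl\|U_K^{z_3,\ldots,z_k} - U_{K_j}^{z_3,\ldots,z_k}\bigr\|_\infty \|\nu\|^2 \longrightarrow 0,
\end{equation*}
and since each $I_{U_{K_j}^{z_3,\ldots,z_k}}(\nu)\geq 0$ by hypothesis, the limit is nonnegative as well.

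The only step requiring external input, and hence the main obstacle, is the product case, which is a Schur-type statement for continuous kernels. I would handle this by discretization: since $U_{K_1}^{z_3,\ldots,z_k}$ and $U_{K_2}^{z_3,\ldots,z_k}$ are continuous, positive definiteness is equivalent to the finite-sample condition that for every $N$ and $x_1,\ldots,x_N \in \mathbb{S}^{d-1}$ the matrix $\bigl[U_{K_i}^{z_3,\ldots,z_k}(x_a,x_b)\bigr]_{a,b=1}^N$ is positive semidefinite (the easy direction uses atomic measures $\nu = \sum c_a \delta_{x_a}$; the converse follows by weak-$*$ approximation of arbitrary signed measures by atomic ones together with continuity of the kernel). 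The classical Schur product theorem then guarantees that the Hadamard product of two positive semidefinite matrices is positive semidefinite, so the finite-sample matrices of $U_{K_1 K_2}^{z_3,\ldots,z_k} = U_{K_1}^{z_3,\ldots,z_k}\cdot U_{K_2}^{z_3,\ldots,z_k}$ remain PSD, and continuity promotes this back to positive definiteness in the integral sense. Varying $(z_3,\ldots,z_k)$ completes the proof in all three parts.
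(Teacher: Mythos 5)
Your proposal is correct and matches the paper's approach: the paper disposes of this lemma in one line ("follows immediately from the same results for two-input kernels"), and your argument is exactly that reduction carried out in detail --- freeze the tail $(z_3,\ldots,z_k)$, observe that forming potential functions commutes with sums, products, and uniform limits, and invoke the classical bivariate facts (linearity for sums, Schur's product theorem via discretization for products, and exchange of limit and integral for uniform limits). Nothing further is needed.
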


This result follows immediately from the same results for two-input kernels. Similarly, we have the following:

\begin{lemma}\label{lem:Schur's Lemma2}
Suppose that $K_1, K_2, \ldots$ are kernels such that each $I_{K_j}$ is minimized by some probability measure $\mu$. Then $I_{K_1 + K_2}$ is also. If the sequence of $K_j$'s converges (uniformly in the first two variables and pointwise in the others) to a kernel $K$, then $I_K$ is also minimized by $\mu$.
\end{lemma}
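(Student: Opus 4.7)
The first claim is an immediate consequence of the linearity of the energy integral in the kernel. For any probability measure $\nu \in \mathcal{P}(\Omega)$, I would simply write
\begin{equation*}
I_{K_1+K_2}(\nu) \;=\; I_{K_1}(\nu) + I_{K_2}(\nu) \;\geq\; I_{K_1}(\mu) + I_{K_2}(\mu) \;=\; I_{K_1+K_2}(\mu),
\end{equation*}
invoking the two minimization hypotheses separately. By induction the same holds for any finite sum $K_1+\cdots+K_n$.

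For the second claim, the plan is to establish that
\begin{equation*}
I_{K_j}(\nu) \longrightarrow I_K(\nu) \quad \text{for every } \nu \in \mathcal{P}(\Omega),
\end{equation*}
and then pass to the limit in $I_{K_j}(\nu) \geq I_{K_j}(\mu)$ to deduce $I_K(\nu) \geq I_K(\mu)$. To prove the convergence on the sphere (the compact setting where the lemma will be applied), I would argue in two steps. First, the hypothesis that $K_j \to K$ uniformly in the first two variables, pointwise in the remaining ones, says that for each fixed $(z_3,\ldots,z_k) \in (\mathbb{S}^{d-1})^{k-2}$,
\begin{equation*}
\iint \bigl[K_j(x,y,z_3,\ldots,z_k)-K(x,y,z_3,\ldots,z_k)\bigr]\, d\nu(x)\,d\nu(y) \longrightarrow 0.
\end{equation*}
Second, since $K$ is continuous on the compact set $(\mathbb{S}^{d-1})^k$ it is bounded by some constant $M$, and uniform convergence in $(x,y)$ gives the uniform estimate $\|K_j\|_\infty \leq M+1$ for all sufficiently large $j$. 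This lets me apply the dominated convergence theorem in the remaining variables $(z_3,\ldots,z_k)$ against $d\nu^{\otimes(k-2)}$, yielding $I_{K_j}(\nu) \to I_K(\nu)$.

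The main (and essentially only) obstacle is this interchange of limit and integration; the rest of the argument is a one-line passage to the limit of an inequality. In the sphere setting compactness furnishes the uniform majorant for free, so no finer estimate is needed. For an extension to $\mathcal{P}^\ast(\mathbb{R}^d)$, one would instead need to provide an integrable majorant for the $K_j$ with respect to $\mu^{\otimes k}$ (and to each competing $\nu^{\otimes k}$), but that refinement is not required by the statement as written.
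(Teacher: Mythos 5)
The paper never writes out a proof of this lemma --- it is presented as following ``immediately from the same results for two-input kernels'' --- and your argument is exactly the one the authors have in mind: linearity of $I_K$ in the kernel for the sum, and passage to the limit in the inequality $I_{K_j}(\nu)\ge I_{K_j}(\mu)$ for the convergence statement. The first half of your proof is complete and correct.

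One step in the second half does not hold as written, though. You claim that uniform convergence in $(x,y)$ yields a single bound $\|K_j\|_\infty\le M+1$ valid for all sufficiently large $j$, and you use this as the majorant for dominated convergence in the tail variables. But the hypothesis is only \emph{pointwise} convergence in $(z_3,\dots,z_k)$: for each fixed tail $z$ you get $\sup_{x,y}|K_j(x,y,z)|\le M+1$ only for $j\ge j_0(z)$, with a threshold depending on $z$, and pointwise convergence of continuous functions on a compact set does not force uniform boundedness (a moving, sharpening bump in the tail variables is the standard obstruction). So the dominating function you invoke is not furnished by the stated hypotheses, and the interchange of limit and integration over $(z_3,\dots,z_k)$ is not justified at the level of generality you claim. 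The repair is easy and costs nothing in the paper's applications: either assume in addition that the $K_j$ are uniformly bounded --- then $\phi_j(z):=\sup_{x,y}|K_j(x,y,z)-K(x,y,z)|$ is dominated by a constant, tends to $0$ pointwise, and $|I_{K_j}(\nu)-I_K(\nu)|\le\int\phi_j\,d\nu^{\otimes(k-2)}\to 0$ by dominated convergence --- or observe that every sequence to which the lemma is applied in this paper (partial sums of uniformly and absolutely convergent Maclaurin or Gegenbauer expansions) in fact converges uniformly in \emph{all} variables, in which case $|I_{K_j}(\nu)-I_K(\nu)|\le\|K_j-K\|_\infty\to 0$ for every $\nu$ simultaneously and no measure-theoretic argument is needed.
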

As in the two-input case, multiplication does not generally preserve the minimizers of energies.

\begin{proposition}\label{prop:kPosDef and Energy Zero is Min}
Suppose that $Y$ is a $k$-positive definite kernel on $\mathbb S^{d-1}$ and $\mu \in \mathcal{P}(\mathbb S^{d-1})$ with $I_Y(\mu) = 0$. Then $\mu$ is a minimizer of $I_Y$.
\end{proposition}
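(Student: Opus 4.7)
The plan is to show that $I_Y(\nu) \geq 0$ for every $\nu \in \mathcal{P}(\mathbb{S}^{d-1})$; since $I_Y(\mu) = 0$, this immediately gives that $\mu$ is a minimizer.

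The key step is to exploit the hypothesis that $Y$ is $k$-positive definite by ``peeling off'' the last $k-2$ variables. Concretely, for any fixed $z_3, \ldots, z_k \in \mathbb{S}^{d-1}$, the potential function $U_Y^{z_3, \ldots, z_k}(x,y) = Y(x,y,z_3,\ldots,z_k)$ is positive definite as a two-variable kernel on $\mathbb{S}^{d-1}$. By the two-input definition of positive definiteness, this means
\begin{equation*}
\int_{\mathbb{S}^{d-1}} \int_{\mathbb{S}^{d-1}} Y(x,y,z_3,\ldots,z_k) \, d\nu(x)\, d\nu(y) \;\geq\; 0
\end{equation*}
for every $\nu \in \mathcal{P}(\mathbb{S}^{d-1})$ (and in fact for every signed Borel measure, but I only need the probabilistic case here). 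The point is that this inequality holds \emph{pointwise} in the remaining variables $z_3, \ldots, z_k$.

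Next, I would integrate this inequality against $d\nu(z_3) \cdots d\nu(z_k)$. Since $Y$ is continuous on the compact space $(\mathbb{S}^{d-1})^k$ and hence bounded, Fubini's theorem applies and the result is
\begin{equation*}
I_Y(\nu) \;=\; \int_{(\mathbb{S}^{d-1})^k} Y(x,y,z_3,\ldots,z_k) \, d\nu(x) d\nu(y) d\nu(z_3) \cdots d\nu(z_k) \;\geq\; 0,
\end{equation*}
where I have used that $\nu$ is a nonnegative measure so that integrating a nonnegative quantity against it preserves the inequality. Combining $I_Y(\nu) \geq 0$ with the hypothesis $I_Y(\mu) = 0$ yields $I_Y(\nu) \geq I_Y(\mu)$ for all $\nu \in \mathcal{P}(\mathbb{S}^{d-1})$, establishing the conclusion.

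There is essentially no analytic obstacle here; the only thing to be careful about is that the positive-definiteness inequality must be integrated against a \emph{nonnegative} measure in the remaining variables (this is why the argument works for probability measures, and why one should not try to extend it to arbitrary signed $\mu$). The proof is really just an application of the definitions combined with Fubini.
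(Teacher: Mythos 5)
Your proof is correct and follows essentially the same route as the paper: fix the last $k-2$ variables, use $k$-positive definiteness to get nonnegativity of the two-variable energy pointwise in $z_3,\ldots,z_k$, and then integrate against the nonnegative measure $\nu$ to conclude $I_Y(\nu)\geq 0 = I_Y(\mu)$. The paper phrases this as bounding $I_Y(\nu)$ below by the minimum over $z_3,\ldots,z_k$ of the inner double integral, which is the same observation.
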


\begin{proof}
Let $\nu \in \mathcal{P}(\mathbb S^{d-1})$. Then, since $Y$ is $k$-positive definite
\begin{equation*}
I_Y(\nu) \geq \min_{z_3, \ldots, z_k \in \mathbb S^{d-1}} \int_{\mathbb S^{d-1}} \int_{\mathbb S^{d-1}} Y(x,y, z_3, \ldots, z_n)\, d\nu(x) d\nu(y) \geq 0 = I_Y(\mu).
\end{equation*}
\end{proof}

We can create multivariate kernels from kernels with fewer inputs in a natural way that preserves minimizers of the energy.
\begin{lemma}\label{lem:kPosDef to nPosDef}
For some kernel $Y: (\mathbb S^{d-1})^k \rightarrow \mathbb{R}$ and $n > k$, let
\begin{equation}
K(x_1, x_2, \ldots, x_n) = \frac{1}{|S|} \sum_{\pi \in S} Y(x_1, x_2, x_{\pi(3)}, \ldots, x_{\pi(k)}),
\end{equation}
where $S$ is a nonempty set of permutations of  the set $\{ 3, \ldots, n\}$. Then $I_K$ is minimized by $\mu \in \mathcal{P}(\mathbb S^{d-1})$ if and only if $I_Y$ is as well. In addition, if $Y$ is $k$-positive definite, then $K$ is $n$-positive definite.
\end{lemma}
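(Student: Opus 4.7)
The plan is to first verify the identity $I_K(\nu)=I_Y(\nu)$ for every $\nu\in\mathcal{P}(\mathbb{S}^{d-1})$. Once this is in hand, the equivalence of minimizers is immediate, since the two energy functionals coincide on the feasible set $\mathcal{P}(\mathbb{S}^{d-1})$. The identity itself reduces to a Fubini computation. By linearity,
\[
I_K(\nu)=\frac{1}{|S|}\sum_{\pi\in S}\int_{(\mathbb{S}^{d-1})^n} Y(x_1,x_2,x_{\pi(3)},\dots,x_{\pi(k)})\,d\nu(x_1)\cdots d\nu(x_n).
\]
For each fixed $\pi\in S$, the integrand depends only on the $k$ variables indexed by $\{1,2,\pi(3),\dots,\pi(k)\}$, and these indices are distinct because $\pi$ is a permutation of $\{3,\dots,n\}$. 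Since $\nu$ is a probability measure, integrating over the remaining $n-k$ variables yields $1$, and relabeling the dummy variables of integration (say $x_{\pi(j)}\mapsto y_j$ for $3\le j\le k$) converts the integral to $I_Y(\nu)$. Averaging over $\pi\in S$ then gives $I_K(\nu)=I_Y(\nu)$.

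For the second assertion, fix arbitrary $z_3,\dots,z_n\in\mathbb{S}^{d-1}$. Then
\[
U_K^{z_3,\dots,z_n}(x,y)=\frac{1}{|S|}\sum_{\pi\in S} U_Y^{z_{\pi(3)},\dots,z_{\pi(k)}}(x,y).
\]
If $Y$ is $k$-positive definite, then for each $\pi\in S$ the kernel $U_Y^{z_{\pi(3)},\dots,z_{\pi(k)}}(x,y)$ is a two-input positive definite function of $(x,y)$, the parameters $z_{\pi(3)},\dots,z_{\pi(k)}$ being admissible points of $\mathbb{S}^{d-1}$. By the two-input case of Lemma \ref{lem:Schur's Lemma}, a finite non-negative linear combination of positive definite kernels is positive definite, so $U_K^{z_3,\dots,z_n}(x,y)$ is positive definite. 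Since $(z_3,\dots,z_n)$ was arbitrary, $K$ is $n$-positive definite.

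There is no substantive obstacle in this argument; the lemma is essentially bookkeeping that exploits two trivial but useful features: (i) integrating a function of fewer variables against a product probability measure leaves the value unchanged, and (ii) $k$-positive definiteness is preserved under averaging of the extra parameters. The only point requiring a little care is the observation that, because $\pi$ is a permutation of $\{3,\dots,n\}$, the indices $1,2,\pi(3),\dots,\pi(k)$ are pairwise distinct, which is what lets Fubini dispose of the remaining $n-k$ integrations cleanly.
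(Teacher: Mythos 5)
Your proof is correct and follows essentially the same route as the paper: the energy identity $I_K(\nu)=I_Y(\nu)$ (which the paper simply asserts and you justify via Fubini and the fact that $\nu$ is a probability measure), and the observation that the potential function of $K$ at fixed $z_3,\dots,z_n$ is an average of positive definite potential functions of $Y$. The only difference is that you supply the bookkeeping details the paper omits.
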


Note that if $S$ is the set of all such permutations, then $K$ is symmetric in the last $n-2$ variables.

\begin{proof}
For any $\nu \in \mathcal{M}(\mathbb S^{d-1})$, we see that
\begin{equation*}
    I_K(\nu) = I_Y(\nu),
\end{equation*}
meaning their minimizers must be the same,  and for any $z_3, \ldots, z_n \in \mathbb S^{d-1}$,
\begin{equation*}
\int_{\mathbb S^{d-1}} \int_{\mathbb S^{d-1}} K(x,y, z_3, \ldots, z_n)\, d\nu(x) d\nu(y) = \frac{1}{|S|} \sum_{\pi \in S} \int_{\mathbb S^{d-1}} \int_{\mathbb S^{d-1}} Y(x, y, z_{\pi(3)}, \ldots, z_{\pi(k)})\nu(x)\, d\nu(y),
\end{equation*}
which is non-negative if $Y$ is $k$-positive definite.
\end{proof}

\begin{proposition}\label{prop:Min is Min for Symmetrization}
For some kernel $Y: (\mathbb S^{d-1})^k \rightarrow \mathbb{R}$ let $K$ be defined by
$$K( x_1,\ldots, x_k) = \frac{1}{k!} \sum_{\pi} Y( x_{\pi(1)}, \ldots, x_{\pi(k)}),$$
where $\pi$ varies over all permutations of $\{1, \ldots, k\}$. Then $K$ is a symmetric kernel, and $I_K$ is minimized by $\mu \in \mathcal{P}(\mathbb S^{d-1})$ if and only if $I_Y$ is as well.
\end{proposition}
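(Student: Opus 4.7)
The proposition has two assertions: symmetry of $K$ and equivalence of minimizers of $I_K$ and $I_Y$. The plan is to dispatch symmetry by direct inspection of the averaging, then to reduce the equivalence of minimizers to the stronger statement that $I_K(\nu) = I_Y(\nu)$ for every $\nu \in \mathcal{M}(\mathbb S^{d-1})$ (not just probability measures). Once this pointwise identity of functionals is established, the equivalence of minimizers over $\mathcal{P}(\mathbb S^{d-1})$ is an immediate tautology.

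For symmetry, I would observe that for any permutation $\tau$ of $\{1,\dots,k\}$, the map $\pi \mapsto \pi \circ \tau$ is a bijection on the symmetric group $\mathfrak{S}_k$. Applying this reindexing to the averaged sum defining $K(x_{\tau(1)},\dots,x_{\tau(k)})$ shows that this equals $K(x_1,\dots,x_k)$, establishing full permutation symmetry.

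For the energy identity, I would fix $\nu \in \mathcal{M}(\mathbb S^{d-1})$, expand the definition of $I_K(\nu)$, and exchange the (finite) sum with the integral:
\begin{equation*}
I_K(\nu) \;=\; \frac{1}{k!} \sum_{\pi} \int_{\mathbb S^{d-1}} \cdots \int_{\mathbb S^{d-1}} Y(x_{\pi(1)}, \ldots, x_{\pi(k)})\, d\nu(x_1) \cdots d\nu(x_k).
\end{equation*}
In each term, because the product measure $\nu^{\otimes k}$ is symmetric under relabeling of coordinates, the substitution $y_i = x_{\pi(i)}$ (equivalently, a Fubini-based reordering of the iterated integrals, all taken against the same $\nu$) converts the integrand to $Y(y_1,\dots,y_k)$ and the measure to $d\nu(y_1)\cdots d\nu(y_k)$. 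Hence each of the $k!$ summands equals $I_Y(\nu)$, giving $I_K(\nu) = I_Y(\nu)$.

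With this identity in hand, a measure $\mu \in \mathcal{P}(\mathbb S^{d-1})$ minimizes $I_K$ over $\mathcal{P}(\mathbb S^{d-1})$ iff it minimizes $I_Y$ there, since the two functionals agree on every probability measure. There is no real obstacle: the only thing that needs to be justified with any care is the interchange of the finite sum and the iterated integrals, which is automatic because $Y$ is continuous on a compact domain and $\nu$ is a finite measure, and the renaming of dummy variables in each summand, which is valid because the same measure $\nu$ appears in every coordinate.
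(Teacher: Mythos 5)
Your proof is correct and follows essentially the same route as the paper, which also establishes the identity $I_K(\nu)=I_Y(\nu)$ for arbitrary measures by exploiting the symmetry of the product measure $\nu^{\otimes k}$ under relabeling of coordinates (the paper simply notes the argument is identical to that of Lemma~\ref{lem:kPosDef to nPosDef}). Your explicit treatment of the symmetry of $K$ via the bijection $\pi\mapsto\pi\circ\tau$ and of the interchange of sum and integral fills in details the paper leaves implicit, but introduces no new ideas.
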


The proof is identical to that of Lemma \ref{lem:kPosDef to nPosDef}. We note that, unlike in Lemma \ref{lem:kPosDef to nPosDef}, $k$-positive definiteness of $Y$ does not imply that $K$ is also $k$-positive definite. In fact, in the three-input case, $-V^2$ and $-A^2$ in this paper are examples of symmetric kernels that are not $3$-positive definite \cite[Propositions 6.9 and 6.10]{BFGMPV1} but are the symmetrizations of $3$-positive definite kernels (modulo a constant), see \eqref{eq:Volume^2 SDP decomposition} and \eqref{eq:Area^2 SDP Decomposition}. 

We finally remark that the discussion of this section generalizes to arbitrary compact metric spaces in place of  the sphere $\mathbb S^{d-1}$. 

\subsection{Two-input volumes}\label{sec:k=2}

Here, we address the two-input versions of $V^2$ and $A^2$ on the sphere.

\begin{proposition}\label{prop:vsas}
Let $k=2$. On the sphere $\mathbb{S}^{d-1}$, $\sigma$ is a maximizer of the two-input energies $I_{A^s}$ and $I_{V^s}$ for $0<s<2$. Moreover, in the case of $A^s$, $\sigma$ is the unique maximizer.
\end{proposition}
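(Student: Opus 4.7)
The plan is to reduce both statements to the positive-definite expansion machinery of Schoenberg (Theorem~\ref{thm:sch}). Since $x,y\in\S^{d-1}$, we have the closed forms $A^s(x,y) = f_A(\inner{x}{y})$ with $f_A(t)=(2-2t)^{s/2}$, and $V^s(x,y) = f_V(\inner{x}{y})$ with $f_V(t)=(1-t^2)^{s/2}$. For any continuous $f$ and any $\mu\in\P(\S^{d-1})$, expanding $f=\sum_{m\ge 0}\hat{f}_m P_m^d$ and using $\iint P_m^d(\inner{x}{y})\,d\mu(x)\,d\mu(y)\ge 0$ (with equality when $\mu=\sigma$ and $m\ge 1$) gives
\begin{equation*}
I_f(\mu) - I_f(\sigma) = \sum_{m\ge 1}\hat{f}_m \iint P_m^d(\inner{x}{y})\,d\mu(x)\,d\mu(y).
\end{equation*}
Thus it suffices to show that $\hat{f}_{A,m}\le 0$ and $\hat{f}_{V,m}\le 0$ for every $m\ge 1$, and strictly negative for $f_A$ to secure uniqueness.

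The key tool for both kernels is the Frullani-type representation
\begin{equation*}
r^{\alpha} = \frac{\alpha}{\Gamma(1-\alpha)}\int_0^\infty (1-e^{-rt})\, t^{-\alpha-1}\,dt,\qquad 0<\alpha<1,\ r>0,
\end{equation*}
applied with $\alpha = s/2$. Taking $r = 2-2\inner{x}{y}$ yields $A^s(x,y) = C_s\int_0^\infty(1 - e^{-2t}e^{2t\inner{x}{y}})\,t^{-s/2-1}\,dt$. Now $e^{2t\inner{x}{y}}$ is positive definite on $\S^{d-1}$, and the three-term recurrence for Gegenbauer polynomials shows inductively that each monomial $\inner{x}{y}^n$ has a strictly positive Gegenbauer coefficient of degree $m$ for every $0\le m\le n$ with $m\equiv n\pmod 2$. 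Consequently $e^{2t\inner{x}{y}}=\sum_m a_m(t)\,P_m^d(\inner{x}{y})$ with $a_m(t)>0$ for every $t>0$ and $m\ge 0$, and after justifying the interchange of sum and integral one reads off
\begin{equation*}
\hat{f}_{A,m} = -C_s\int_0^\infty e^{-2t}\,a_m(t)\,t^{-s/2-1}\,dt < 0,\qquad m\ge 1.
\end{equation*}
An identical argument with $r=1-\inner{x}{y}^2$ handles $V^s$: here $e^{t\inner{x}{y}^2}$ is positive definite with only even-degree Gegenbauer coefficients (being a product of the positive definite kernel $\inner{x}{y}^2$), so $\hat{f}_{V,m}\le 0$ for $m\ge 1$, with equality precisely when $m$ is odd.

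Uniqueness in the $A^s$ case then follows from the strict negativity of every $\hat{f}_{A,m}$, $m\ge 1$: the equality $I_{A^s}(\mu)=I_{A^s}(\sigma)$ forces $\iint P_m^d(\inner{x}{y})\,d\mu\,d\mu=0$ for every $m\ge 1$, which upon decomposing $\mu$ into spherical harmonic components forces $\mu=\sigma$. The main technical obstacle is justifying the interchange of sum and integral in the Gegenbauer expansion of the Frullani integral; this is handled by Fubini once one checks $a_m(t) = O(t^m)$ as $t\to 0$ (from the lowest-order term $n=m$ in the power series for $e^{2t\inner{x}{y}}$) and $a_m(t)\le e^{2t}$ for all $t\ge 0$ (by evaluating the Gegenbauer expansion at $\inner{x}{y}=1$). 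Uniqueness is not asserted for $V^s$ and would require a separate analysis, since the vanishing of the odd Gegenbauer coefficients of $f_V$ prevents the argument above from ruling out perturbations concentrated in the odd-harmonic subspaces.
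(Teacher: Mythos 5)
Your argument is correct, but it reaches the key sign condition by a different route than the paper. Both proofs reduce the statement to showing that the Gegenbauer coefficients $\hat f_m$ of $f_A(t)=(2-2t)^{s/2}$ and $f_V(t)=(1-t^2)^{s/2}$ are nonpositive for all $m\ge 1$ (strictly negative for $f_A$), and both ultimately rest on the same elementary fact: each monomial $t^n$ is positive definite on $\mathbb{S}^{d-1}$ with a strictly positive coefficient of $P_m^d$ for every $m\le n$ of the same parity. The paper gets there in two lines by expanding the binomial series $(1-w)^{s/2}=\sum_{m\ge 0}(-1)^m\binom{s/2}{m}w^m$ with $w=t$, respectively $w=t^2$, and observing that $(-1)^m\binom{s/2}{m}<0$ for every $m\ge 1$ when $0<s<2$; thus each kernel is a constant minus a series in positive definite monomials with positive weights. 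You instead route everything through the Frullani--Bernstein representation $r^{s/2}=C_s\int_0^\infty(1-e^{-ru})\,u^{-s/2-1}\,du$ and transfer positivity through the exponentials $e^{2u\inner{x}{y}}$ and $e^{u\inner{x}{y}^2}$. This is the classical subordination argument: it is more flexible (it applies verbatim to any Bernstein function of $2-2\inner{x}{y}$ or of $1-\inner{x}{y}^2$, e.g.\ the logarithmic kernel, and avoids having to track the sign of the binomial coefficients), at the cost of the Fubini justifications, which you correctly identify and supply via the bounds $a_m(u)=O(u^m)$ near $0$ and $a_m(u)\le e^{2u}$. Your uniqueness argument for $A^s$ via strict negativity of all $\hat f_{A,m}$, and your observation that the vanishing of the odd coefficients of $f_V$ blocks the analogous conclusion for $V^s$, agree with the paper.
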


\begin{proof}
It is well known, see e.g. \cite[Proposition 2.3]{BGMPV1}, that  $\sigma$ is a minimizer of $I_K$, where $K(x,y)=f(\langle x,y\rangle)$, if and only if for the Gegenbauer expansion $f(t)=\sum_{m=0}^{\infty} \hat{f}_m P_m^d(t)$, $\hat{f}_m\geq 0$ for all $m\geq 1$ (which, according to Theorem \ref{thm:sch}, is equivalent to the  fact that $f$ is positive definite on $\mathbb S^{d-1}$ modulo an additive constant). Moreover, $\sigma$ is the unique minimizer if $\hat{f}_m> 0$ for all $m\geq 1$.

We note that it is sufficient to use a weaker condition based on Maclaurin expansions of $f$. Assume $f(t)=\sum_{m=0}^{\infty} f^*_m t^m$ for $t\in[-1,1]$, with the series converging uniformly and absolutely. Each function $t^m$ is positive definite on $\mathbb{S}^{d-1}$ by Schur's product theorem, 
and, by Theorem \ref{thm:sch}, it can be represented as a non-negative combination of Gegenbauer polynomials with, in particular, a positive coefficient for $P_m^d(t)$. This means that, whenever all $f^*_m$ are non-negative (positive) for $m\geq 1$, then all Gegenbauer coefficients $\hat{f}_m$ for $m\geq 1$ are also non-negative (positive). We need  to show that $\sigma$ is a maximizer, so it is sufficient to check that all coefficients, starting from $m=1$, of the  Maclaurin expansions of $V^s$ and $A^s$ are nonpositive.

Indeed, 
\begin{equation*}
V^s(x,y) = (V^2)^{s/2} = \left( 1-\langle x,y\rangle^2 \right)^{s/2} =  \sum_{m=0}^{\infty} (-1)^m \binom{s/2}{m} \langle x,y \rangle^{2m}.
\end{equation*}
Similarly, 
\begin{equation*}
A^s(x,y) = (A^2)^{s/2} = (2-2\langle x,y\rangle)^{s/2}=2^{s/2} (1-\langle x,y\rangle)^{s/2} = 2^{s/2}\sum_{m=0}^{\infty} (-1)^m \binom{s/2}{m} \langle x,y \rangle^m.
\end{equation*}
In both cases, $(-1)^m \binom{s/2}{m}$ is negative for all $m\geq 1$ so $\sigma$ is a maximizer for $V^s$ and the unique maximizer for $A^s$.
\end{proof}

\begin{remark}
Since $V$ is invariant under  central symmetry, it would be natural  to consider it as a potential on the projective space  $\mathbb{RP}^{d-1}$. Under this setup the uniform distribution over $\mathbb{RP}^{d-1}$ is the unique maximizer of $I_{V^s}$.
\end{remark}

Since $A(x,y) = \| x-y \|$, the statements  about $A^s$  in Proposition \ref{prop:vsas} can be viewed as a special case of a more general result of  Bjorck  \cite{Bj}. Below we collect his results specialized to the sphere.

\begin{proposition}[Bjorck \cite{Bj}]\label{prop:k=2}
Let $k=2$, i.e. $A (x,y) = \| x- y \|$. For the two-input energy $I_{A^s}$ on the sphere $\mathbb{S}^{d-1}$,
\begin{itemize}
\item if $0<s<2$, then $\sigma$ is the unique maximizer of $I_{A^s}$;
\item if $s = 2$, then $\mu$ is a maximizer of $I_{A^s}$ if and only if $\mu$ is balanced;
\item if $s > 2$, then the maximizers of $I_{A^s}$ are exactly measures of the the form $\frac{1}{2}( \delta_{p} + \delta_{-p})$, for some $p \in \mathbb{S}^{d-1}$.
\end{itemize}
\end{proposition}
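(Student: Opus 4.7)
My plan is to treat the three regimes separately; each reduces to a short argument once the right reformulation is chosen. The first case, $0<s<2$, is essentially subsumed by Proposition \ref{prop:vsas}: the Maclaurin expansion
\[
A^s(x,y) = 2^{s/2}\sum_{m=0}^{\infty}(-1)^m\binom{s/2}{m}\langle x,y\rangle^m
\]
has strictly negative coefficients for every $m\ge 1$ when $0<s<2$, so by the Schur/Schoenberg combination used in that proof, $-A^s$ is, modulo an additive constant, strictly positive definite on $\mathbb{S}^{d-1}$; this yields that $\sigma$ is the unique maximizer.

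For $s=2$ I would simply expand $A^2(x,y) = \|x-y\|^2 = 2 - 2\langle x,y\rangle$ on the sphere, so that
\[
I_{A^2}(\mu) \;=\; 2 \,-\, 2\,\Bigl\|\int_{\mathbb{S}^{d-1}} x\,d\mu(x)\Bigr\|^2 \;\le\; 2,
\]
with equality if and only if the barycenter of $\mu$ vanishes, i.e., $\mu$ is balanced.

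For $s>2$ the strategy is to bootstrap from the $s=2$ case using the trivial pointwise bound $\|x-y\|\le 2$ on $\mathbb{S}^{d-1}$:
\[
A^s(x,y) \;=\; A^{s-2}(x,y)\cdot A^2(x,y) \;\le\; 2^{s-2}\,A^2(x,y),
\]
so integrating against $\mu\otimes\mu$ and applying the previous step gives $I_{A^s}(\mu)\le 2^{s-2}I_{A^2}(\mu)\le 2^{s-1}$, which matches $I_{A^s}\bigl(\tfrac12(\delta_p+\delta_{-p})\bigr)=2^{s-1}$. The one step requiring a little care, and what I expect to be the main obstacle, is the equality characterization: one must argue that the non-negative continuous integrand $2^{s-2}A^2(x,y)-A^s(x,y)$ must vanish $\mu\otimes\mu$-a.e., hence identically on $\supp(\mu)\times\supp(\mu)$; since $s>2$ this forces $\|x-y\|\in\{0,2\}$ for all $x,y\in\supp(\mu)$, so every pair of support points is either equal or antipodal, giving $\supp(\mu)\subseteq\{p,-p\}$ for some $p\in\mathbb{S}^{d-1}$. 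Combined with equality in the $s=2$ bound (namely, balancedness), this pins $\mu$ down to $\tfrac12(\delta_p+\delta_{-p})$, completing the characterization.
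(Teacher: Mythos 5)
Your argument is correct in all three regimes, but note that the paper itself offers no proof of this proposition: it is stated as a citation of Bj\"orck's results specialized to the sphere, so there is nothing internal to compare against except Proposition \ref{prop:vsas}, whose proof your first bullet reproduces essentially verbatim (strictly negative Maclaurin coefficients of $(2-2t)^{s/2}$ for $m\ge 1$, hence strictly positive Gegenbauer coefficients of $-A^s$ modulo a constant, hence uniqueness of $\sigma$). Your treatments of $s=2$ and $s>2$ are self-contained and sound: the identity $I_{A^2}(\mu)=2-2\bigl\|\int x\,d\mu\bigr\|^2$ settles the phase-transition case, and the pointwise bound $A^s\le 2^{s-2}A^2$ together with the standard observation that a nonnegative continuous function with zero $\mu\otimes\mu$-integral vanishes on $\supp(\mu)\times\supp(\mu)$ forces $\|x-y\|\in\{0,2\}$ on the support, so that $\supp(\mu)\subseteq\{p,-p\}$, and balancedness (equality in the second inequality of the chain) then pins down the weights to $\tfrac12,\tfrac12$. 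This bootstrapping from the quadratic case is a clean and elementary route to the $s>2$ classification; the only point worth making explicit is the absolute convergence of the binomial series for $(1-t)^{s/2}$ at the endpoints $t=\pm1$, which holds since $\sum_m\bigl|\binom{s/2}{m}\bigr|<\infty$ for $s/2\in(0,1)$ and is needed to apply the term-by-term Gegenbauer argument.
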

A similar proposition about the minimizers over $\mathcal P(\mathbb S^{d-1})$ can be formulated for powers of $V$ in the two-input case. 

\begin{proposition}\label{prop:Vk=2}
Let $k=2$, i.e. $V (x,y) = \left( 1-\langle x,y\rangle^2 \right)^{1/2} $. For the two-input energy $I_{V^s}$ on the sphere $\mathbb{S}^{d-1}$,
\begin{itemize}
\item if $0<s<2$, then $\sigma$ is  a maximizer of $I_{V^s}$;
\item if $s = 2$, then $\mu$ is a maximizer of $I_{V^s}$ if and only if $\mu$ is isotropic;
\item if $s > 2$, then the only  maximizers (up to central symmetry and rotation) of $I_{V^s}$ are uniform measures on the elements of an orthonormal basis of $\mathbb R^d$, i.e. measures of the  the form $\frac{1}{d} \sum_{i=1}^d \delta_{e_i}$, where $\{e_i\}_{i=1}^d$ is an orthonormal basis of $\mathbb R^d$. 
\end{itemize}
\end{proposition}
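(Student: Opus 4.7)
The plan is to handle the three regimes separately, with the first two being direct consequences of results already established. For $0<s<2$, Proposition \ref{prop:vsas} already shows that $\sigma$ maximizes $I_{V^s}$, so nothing remains. For $s=2$, I would simply write
\begin{equation*}
I_{V^2}(\mu) = \int_{\mathbb{S}^{d-1}}\int_{\mathbb{S}^{d-1}} \bigl(1-\langle x,y\rangle^2\bigr)\, d\mu(x)\, d\mu(y) = 1 - \int_{\mathbb{S}^{d-1}}\int_{\mathbb{S}^{d-1}} \langle x,y\rangle^2\, d\mu(x)\, d\mu(y),
\end{equation*}
and apply Lemma \ref{lem:frame}, which says the second integral is at least $1/d$ with equality precisely on isotropic measures. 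This pins down the maximum value as $1 - 1/d$ and identifies the maximizers.

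For $s>2$, the crux is the pointwise estimate $V^s \le V^2$ on $\mathbb{S}^{d-1}\times\mathbb{S}^{d-1}$, which holds because $V^2 = 1-\langle x,y\rangle^2 \in [0,1]$ and $s/2 > 1$. Integrating against $\mu\otimes\mu$ and combining with the $s=2$ bound gives
\begin{equation*}
I_{V^s}(\mu) \le I_{V^2}(\mu) \le 1 - \tfrac{1}{d}.
\end{equation*}
A direct computation with $\mu_0 = \frac{1}{d}\sum_{i=1}^d \delta_{e_i}$ yields $I_{V^s}(\mu_0) = \frac{1}{d^2}\cdot d(d-1) = 1-\frac{1}{d}$ since $V(e_i,e_j) = 1$ for $i\ne j$ and $0$ on the diagonal, so equality is attained.

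The remaining and most delicate step is to extract the full structure of the maximizers from the two equalities. Equality in the outer estimate forces $\mu$ to be isotropic (from the $s=2$ case). Equality $V^s = V^2$ pointwise $\mu\otimes\mu$-almost everywhere forces $V(x,y)\in\{0,1\}$ for $\mu\otimes\mu$-a.e.\ pair, i.e., $\langle x,y\rangle \in \{-1,0,1\}$ for all $x,y$ in the support of $\mu$. This rigidity (every pair of support points is either equal, antipodal, or orthogonal) is the main obstacle, but a short argument shows that such a set must be contained in $\{\pm e_1,\dots,\pm e_d\}$ for some orthonormal basis: pick any support point $x_1$, and any other support point $x_2$ must be $\pm x_1$ or orthogonal to $x_1$; iterating yields at most $d$ mutually orthogonal directions. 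Finally, isotropy forces the total mass on each antipodal pair $\{e_i,-e_i\}$ to equal $1/d$: testing \eqref{eq:isotropic} with $y=e_i$ gives $\int \langle x, e_i\rangle^2\, d\mu = p_i = 1/d$. Since $V$ is invariant under the sign flip $x\mapsto -x$ in either argument, all such measures are equivalent to $\frac{1}{d}\sum_{i=1}^d \delta_{e_i}$ modulo central symmetries at individual atoms and a rotation, completing the classification.
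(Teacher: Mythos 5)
Your proposal is correct and follows essentially the same route as the paper: the case $0<s<2$ is quoted from Proposition \ref{prop:vsas}, the case $s=2$ is the equality case of Lemma \ref{lem:frame}, and for $s>2$ the paper defers to the more general Corollary \ref{cor:V^s for s>2}, whose proof is exactly your pointwise comparison $V^s\le V^2$ combined with the rigidity of the equality set and isotropy. Your $k=2$ specialization, including the observation that the support must lie in $\{\pm e_1,\dots,\pm e_d\}$ with mass $1/d$ on each antipodal pair, matches the paper's argument.
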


The case $0<s<2$ is covered in Proposition \ref{prop:vsas} above. The phase transition case  $s=2$ follows from the case of equality in Lemma \ref{lem:frame}. The case $s>2$ can be easily handled by the linear programming method, but we give the proof of a more general statement for all $2\le k\le d$ in Corollary \ref{cor:V^s for s>2}. Exposition on the logarithmic and singular energies ($s < 0$) can be found in \cite{BHS} (and the references therein) for $A^s$ and \cite{CHS} for $V^s$.

\subsection{Comparison of two-input and multi-input energies}\label{sec:compare}  The multi-input, i.e. $k\geq 3$, generalizations of Propositions \ref{prop:k=2} and \ref{prop:Vk=2}, which are naturally more complicated, require different methods and form the main purpose of this paper.   As stated in the introduction, we believe that the uniform measure $\sigma$ still maximizes both $I_{A^s}$ and $I_{V^s}$ in the range $0<s<2$ for $k\ge 3$, but this remains a conjecture. 

When $s=2$ and $k\ge 3$, maximizers of $I_{V^2}$ are, as in Proposition \ref{prop:Vk=2},
 exactly the isotropic measures on $\mathbb S^{d-1}$ \cite{CC} (see Theorem \ref{thm:volume-gen}). However, we shall show  (see Theorem \ref{thm:area-gen}, as well as Theorems \ref{thm: triangle area squared max}  and  \ref{thm:A^2maxGen})  that the maximizers of $I_{A^2}$ for $3\le k \le d+1$ are exactly \emph{balanced isotropic measures} (and not just balanced as in Proposition \ref{prop:k=2} for $k=2$). 

 The case  $s>2$ of Proposition \ref{prop:Vk=2} for $I_{V^s}$ still holds for all $2\le k \le d$ (Corollary \ref{cor:V^s for s>2}). However, we are only able to prove an analogue of this case for $A^s$ when $k=d+1$ (Corollary \ref{cor:A^s for s>2}): the uniform measure on the vertices of a regular simplex replaces the two poles as the unique (up to rotations) maximizer of $I_{A^s}$ for $s>2$. We conjecture that maximizers of $I_{A^s}$ with $s>2$ are discrete for all $3\le k \le d+1$, but their exact structure remains elusive (see end of Section~\ref{sec:A^2 on Sphere}).

This discussion shows that in the multi-input case $k\ge 3$ the behavior of $A^s$ is significantly more complicated than that of $V^s$, which is evidenced already by the fact that the polynomial representation of $A^2$ (Lemma \ref{lem:A-formula}) is  more involved than that of $V^2$. 

\section{Discrete Energies and Optimality of the Regular Simplex}\label{sec:Discrete A and V}

Before presenting the study of maximizers of continuous energies with kernels  $V^s$ and $A^s$, we discuss their discrete analogues with $N=d+1$ points, and find that the  regular simplex is a maximizer for $0 < s < 2$. Consequently, we discover a new geometrically optimal property of the regular simplex. These statements use the results from Sections \ref{sec:vol} and \ref{sec:A^2 on Sphere}  about continuous $k$-point energies  as a tool. We chose to open with these discrete results since, in our opinion, they yield particularly elegant applications of the  theory. We start with a general statement:

\begin{theorem}\label{thm: simplex best, general}
Let $2 \leq k \leq d+1$ and $B: ( \mathbb{S}^{d-1})^k \rightarrow [0, \infty)$ be a polynomial kernel of degree at most two in each variable, such that $\sigma$ maximizes $I_B$ and whenever $x_i = x_j$ for some $i \neq j$, then $B(x_1, x_2, \ldots, x_k) = 0$.

Let $f: [0, \infty) \rightarrow \mathbb{R}$ be concave, increasing, and such that $f(0) = 0$, and define the kernel $K(x_1, \ldots, x_k) = f(B(x_1, \ldots, x_k))$. If $N= d+1$, then the set of  vertices of a regular $(N-1)$-simplex inscribed in $\mathbb{S}^{d-1}$ maximizes the discrete energy $E_K(\omega_N)$ over all $N$-point configurations on the sphere.

Moreover, if $f$ is strictly concave and strictly increasing, then the vertices of regular $(N-1)$-simplices are the only maximizers of the energy   
(if $B$ doesn't contain terms which are linear in some of the variables, the uniqueness is up to changing any individual vertex $x$ to its opposite $-x$).
\end{theorem}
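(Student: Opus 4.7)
The plan is to use Jensen's inequality to reduce the discrete $K$-energy to the discrete $B$-energy, bound the latter by $I_B(\sigma)$ via the hypothesis, and verify that the regular simplex saturates every inequality in the resulting chain. Because $B$ vanishes on diagonal tuples (where some $x_i = x_j$) and $f(0) = 0$, the kernel $K = f \circ B$ vanishes there too; writing $N^{(k)} = N(N-1)\cdots(N-k+1)$ for the number of ordered $k$-tuples of distinct indices, both discrete energies are really averages over the $N^{(k)}$ distinct tuples. Jensen's inequality applied to the concave $f$ (with uniform weights on these tuples) gives
\[
E_K(\omega_N) \;\leq\; \frac{N^{(k)}}{N^k}\, f\!\left(\frac{N^k}{N^{(k)}}\, E_B(\omega_N)\right).
\]
Since $B$ is a polynomial of degree at most $2$ in each variable, $I_B(\mu)$ depends only on the first and second moments of $\mu$, and tautologically $E_B(\omega_N) = I_B(\mu_N)$ for the empirical measure $\mu_N = \frac{1}{N}\sum \delta_{z_i}$. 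The hypothesis $I_B(\mu_N) \leq I_B(\sigma)$ together with monotonicity of $f$ then yields the universal upper bound $E_K(\omega_N) \leq \frac{N^{(k)}}{N^k}\, f\!\left(\frac{N^k}{N^{(k)}}\, I_B(\sigma)\right)$.

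Next I would check that the regular $(N-1)$-simplex $\Delta$ inscribed in $\mathbb S^{d-1}$ attains this bound. Its empirical measure $\mu_\Delta$ satisfies $\sum z_i = 0$ and $\sum z_i z_i^T = \frac{d+1}{d}I_d$, hence is balanced and isotropic; its first and second moments agree with those of $\sigma$, giving $I_B(\mu_\Delta) = I_B(\sigma)$. Moreover, the $O(d)$-invariance of the geometric kernel $B$, combined with the fact that the symmetric group $S_{d+1}$ acts on the vertices of $\Delta$ by ambient isometries, forces $B$ to be constant on all distinct ordered $k$-tuples of vertices, so Jensen's inequality is tight. Thus $\Delta$ attains the universal upper bound, establishing the maximality claim.

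For uniqueness, assume $f$ is strictly concave and strictly increasing. At any maximizer $\omega_N$, both inequalities in the chain must be equalities: strict concavity forces $B$ to be constant on all distinct ordered $k$-tuples of $\omega_N$, and strict monotonicity forces $I_B(\mu_N) = I_B(\sigma)$. Using the polynomial degree-$\leq 2$ structure of $B$ and the tuple-constancy condition, one then shows $\mu_N$ must itself be balanced and isotropic. Given this, for $N=d+1$ unit vectors the structure is rigid: expanding $0 = \left|\sum_i z_i\right|^2$ and using the isotropy identity $\sum_i \langle z_i, z_j\rangle^2 = \frac{d+1}{d}$, the $d$ inner products $\{\langle z_i, z_j\rangle\}_{i \neq j}$ for each fixed $j$ satisfy Cauchy--Schwarz with equality, yielding $\langle z_i, z_j\rangle = -1/d$ for all $i \neq j$, so $\omega_N$ is a regular simplex. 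The sign-ambiguity refinement is immediate: if $B$ contains no terms linear in any variable, then $z_i \mapsto -z_i$ leaves every $B$-value (and hence the energy) unchanged. The main obstacle is the step extracting balanced isotropy of $\mu_N$ from the combined maximality and tuple-constancy conditions --- this is where the polynomial degree-$\leq 2$ hypothesis on $B$ must do essential work, via a careful analysis of $I_B$ as a polynomial in the first and second moments of $\mu$ to rule out maximizing moment profiles other than $(0, \tfrac{1}{d} I_d)$.
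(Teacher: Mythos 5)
Your main argument --- dropping the diagonal tuples (since $B$ vanishes there and $f(0)=0$), applying Jensen's inequality to the concave $f$, bounding $E_B(\omega_N)=I_B(\mu_N)\le I_B(\sigma)$ via the hypothesis, and checking that the regular simplex saturates both inequalities because its empirical measure has the moments of $\sigma$ (a spherical $2$-design) and $B$ is constant on its distinct tuples --- is exactly the paper's proof of the maximality claim.

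Where you diverge is the uniqueness part, and there your sketch has a genuine gap that you yourself flag as ``the main obstacle.'' From $I_B(\mu_N)=I_B(\sigma)$ you cannot conclude that $\mu_N$ is \emph{balanced and} isotropic for a general $B$ satisfying the hypotheses: for $B=V^2$ the maximizers of $I_B$ are exactly the isotropic measures, balanced or not (Theorem \ref{thm:volume-gen}), so your rigidity computation, which starts from $0=\bigl\|\sum_i z_i\bigr\|^2$, is not available in that case. The paper does not fill this in inside Theorem \ref{thm: simplex best, general} either (it only says ``the case of uniqueness is similar''); the classification is actually carried out in Corollary \ref{cor:Simplex Best A, V}, where the key input is that the only unit-norm tight frames of $d+1$ vectors in $\mathbb{R}^d$ are the vertices of a regular simplex up to sign changes and rotations \cite{GK}. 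That fact (or an equivalent direct argument) is what closes your gap: isotropy of the $(d+1)$-point empirical measure alone already pins down the configuration up to the sign ambiguity permitted by the parenthetical in the statement, and when $B$ contains linear terms the additional balancedness (which for such $B$ does follow from maximality of $I_B$, as for $A^2$) removes the sign freedom. Your Cauchy--Schwarz argument is correct \emph{once} both balancedness and isotropy are in hand; it is the derivation of those two properties from the equality conditions that is missing as written.
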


\begin{proof}

Let $\omega_N = \{ z_1, \ldots, z_N\}$ be an arbitrary point configuration on $\mathbb{S}^{d-1}$. Since $B$ is zero if two of its inputs are the same, we can restrict the sum to $k$-tuples with distinct entries. Combining this with the fact that $f$ is increasing and concave, using Jensen's inequality,  we have
\begin{align*}
E_K(\omega_N) & := \frac{1}{N^k} \sum_{z_1, \ldots, z_k \in \omega_N}  f(B(z_1, \ldots, z_k)) \\
& \leq \frac{N (N-1) \cdots (N-k+1)}{N^k} f \left( \sum_{\substack{z_{j_1}, \ldots, z_{j_k} \in \omega_N \\
j_1, \ldots, j_k \text{ distinct}}} \frac{B(z_{j_1}, \ldots, z_{j_k})}{N (N-1) \cdots (N-k+1)}\right)\\
& = \frac{N (N-1) \cdots (N-k+1)}{N^k} f \left(  \frac{ N^k E_B (\omega_N)}{N (N-1) \cdots (N-k+1)}\right)\\
& \leq \frac{N (N-1) \cdots (N-k+1)}{N^k} f \left(  \frac{ N^k I_B(\sigma)}{N (N-1) \cdots (N-k+1)}\right).\\
\end{align*}
The first inequality becomes an equality if
\begin{equation*}
B(y_1,\ldots, y_k) = \frac{N^k E_B (\omega_N)}{N (N-1) \cdots (N-k+1)}
\end{equation*}
for all distinct $y_1, \ldots, y_k \in \omega_N$, while the second becomes an equality if the point configuration is a spherical $2$-design, in particular, if $\omega_N$ is a regular simplex. The case of  uniqueness is similar.  
\end{proof}

This generalizes some known results for $B(x,y) = \| x-y\|^2$ \cite{Y, CK}.  Note that this proof also extends  to provide an upper bound of the energy $E_{f \circ B}(\omega_N)$ for every $N \geq k$, and that this upper bound is achieved whenever $B(z_{j_1}, ..., z_{j_k})$ is constant for every $k$-tuple of distinct points, meaning that one may find additional optimizers of the energy. For instance, for $B=V^2$ and $N = d$, any orthonormal basis would be a maximizer (though this would not work for $B = A^2$, since an orthonormal basis is not balanced). An upper bound of this form was given for $E_{V}(\omega_N)$ in \cite[Corollary 5.2]{CC}.

We will show in subsequent  sections that $\sigma$  maximizes the continuous energies with kernels  $V^2$ and $A^2$ (Theorems \ref{thm:area-gen} and \ref{thm:volume-gen}), both of which are polynomials of degree two. Hence  Theorem \ref{thm: simplex best, general} applies,  immediately yielding  the following corollary:

\begin{corollary}\label{cor:Simplex Best A, V}
Assume that either  $K(x_1, \ldots, x_k) = V(x_1, \ldots, x_k)^s$ with   $2 \leq k \leq d$, or $K(x_1, \ldots, x_k) = A(x_1, \ldots, x_k)^s$  with  $2 \leq k \leq d+1$.

Let  $0 < s \le 2$. For $N=d+1$ points,  the discrete $k$-input  energy $E_K$  on the  sphere $\mathbb S^{d-1}$ is uniquely (up to rotations, and up to central symmetry in the case of $V^2$) maximized by the vertices of a regular simplex inscribed in $\mathbb{S}^{d-1}$.
\end{corollary}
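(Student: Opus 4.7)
The plan is to realize the corollary as an instance of Theorem \ref{thm: simplex best, general} applied to $B = V^2$ (in the $V^s$ case, $2 \le k \le d$) and $B = A^2$ (in the $A^s$ case, $2 \le k \le d+1$), with the outer function $f(t) = t^{s/2}$. First I would check the hypotheses: both $V^2$ and $A^2$ are nonnegative polynomial kernels of degree two in each input ($V^2$ is the determinant of the Gram matrix of $\{x_1, \ldots, x_k\}$ and enters each $x_i$ quadratically only through the diagonal entry $\|x_i\|^2$; $A^2$ admits an analogous polynomial description via Lemma \ref{lem:A-formula}), and both vanish whenever two arguments coincide, since the spanning parallelepiped or simplex then collapses. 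Theorems \ref{thm:volume-gen} and \ref{thm:area-gen} provide that $\sigma$ maximizes $I_{V^2}$ and $I_{A^2}$ on $\mathbb{S}^{d-1}$. The function $f(t) = t^{s/2}$ is concave, increasing, and satisfies $f(0) = 0$ for $0 < s \le 2$, and it is strictly concave and strictly increasing on $(0,\infty)$ when $0 < s < 2$. Since $V^s = f \circ V^2$ and $A^s = f \circ A^2$, Theorem \ref{thm: simplex best, general} immediately gives that for $N = d+1$ the vertices of a regular $d$-simplex inscribed in $\mathbb{S}^{d-1}$ maximize both $E_{V^s}$ and $E_{A^s}$ throughout $0 < s \le 2$.

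For uniqueness in the range $0 < s < 2$, the ``moreover'' clause of Theorem \ref{thm: simplex best, general} applies directly. The case distinction is accounted for by the parenthetical remark in that clause concerning linear terms: $V^2$ is invariant under negating any single argument (the volume of the spanning parallelepiped is preserved when one vector is replaced by its opposite), so $V^2$ contains no terms linear in any variable, and uniqueness is lifted to being modulo individual sign flips---the ``central symmetry'' in the corollary statement. By contrast, $A^2$ visibly contains terms linear in each variable (already from $A^2(x_1,x_2) = 2 - 2\inner{x_1}{x_2}$ in the two-input case), so uniqueness for $A^s$ is only up to rotation.

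The endpoint $s = 2$ requires a short supplementary argument, since $f(t) = t$ is not strictly concave; this is the main place where one steps outside Theorem \ref{thm: simplex best, general} and is the principal obstacle. Here $E_B(\omega_N) = I_B\bigl(\tfrac{1}{N}\sum_i \delta_{z_i}\bigr) \le I_B(\sigma)$, with equality if and only if the empirical measure $\mu_{\omega_N}$ is itself a maximizer of $I_B$; by Theorems \ref{thm:volume-gen} and \ref{thm:area-gen} this forces $\omega_N$ to be a unit-norm tight frame (for $V^2$) or a balanced unit-norm tight frame (for $A^2$). For $N = d+1$, the tight-frame condition $\sum_i z_i z_i^T = \frac{d+1}{d} I_d$ forces the Gram matrix to take the form $\frac{d+1}{d}(I_{d+1} - v v^T)$ for some unit null vector $v \in \mathbb{R}^{d+1}$, and the unit-norm condition then pins $v_i = \pm 1/\sqrt{d+1}$. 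In the balanced case the null vector must be proportional to $(1, \ldots, 1)^T$, recovering the regular simplex's Gram matrix exactly; in the unrestricted case the remaining sign ambiguity in $v$ corresponds precisely to the allowed individual vertex negations, completing the uniqueness claim.
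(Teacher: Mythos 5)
Your proposal is correct and follows essentially the same route as the paper: apply Theorem \ref{thm: simplex best, general} with $B=V^2$ or $B=A^2$ and $f(t)=t^{s/2}$ for $0<s<2$, then treat $s=2$ separately by reducing to the classification of (balanced) unit-norm tight frames with $d+1$ elements via Theorems \ref{thm:volume-gen} and \ref{thm:area-gen}. The only difference is that where the paper cites \cite[Theorem 2.6]{GK} for that classification, you supply the short Gram-matrix argument ($G=\tfrac{d+1}{d}(I_{d+1}-vv^T)$ with $v_i=\pm 1/\sqrt{d+1}$) directly, which is a correct and self-contained substitute.
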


\begin{proof}
For $0<s<2$, the function $f(t) = t^{s/2}$ is strictly concave and strictly increasing, so the Theorem immediately applies. The uniqueness in the case $s=2$ needs a separate discussion. By Theorems \ref{thm:area-gen} and \ref{thm:volume-gen}, $I_{V^2}$ is maximized by isotropic measures on $\mathbb S^{d-1}$, and $I_{A^2}$ -- by balanced isotropic measures. In the discrete case, isotropic measures on $\mathbb S^{d-1}$ are exactly unit norm tight frames. The only tight frames  on $\mathbb S^{d-1}$  with $N=d+1$ elements (up to central symmetry and rotations) are the vertices of a regular simplex \cite[Theorem 2.6]{GK}. 
\end{proof}

Taking $K = A^s$ in Corollary \ref{cor:Simplex Best A, V},  and setting $j=k-1$, we obtain an interesting  geometric result: 

\begin{corollary}\label{cor:Geometric Simplex is best}
Let $1 \leq j \leq d$, $0 < s \leq 2$, $S$ be a $d$-simplex inscribed in $\mathbb{S}^{d-1}$, $\mathcal{F}_{j}$ the set of $j$-dimensional faces of $S$, and $\mathrm{Vol}_{j}(C)$ the $j$-dimensional volume of a set $C$. Then
\begin{equation}\label{eq:T-functional}
\sum_{F \in \mathcal{F}_{j}} \mathrm{Vol}_{j}(F)^s,
\end{equation}
achieves its maximum if and only if $S$ is a regular simplex.
\end{corollary}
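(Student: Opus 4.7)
The plan is to recognize $\sum_{F \in \mathcal{F}_{j}} \mathrm{Vol}_{j}(F)^s$ as, up to a positive normalizing constant, the discrete multivariate energy $E_{A^s}$ with $k = j+1$ inputs evaluated on the vertex set of $S$, and then to invoke Corollary \ref{cor:Simplex Best A, V} directly.

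More precisely, let $\omega_N = \{v_1, \ldots, v_{d+1}\}$ be the vertex set of $S$, so $N = d+1$, and set $k = j+1$. The hypothesis $1 \le j \le d$ translates to $2 \le k \le d+1$, which is exactly the range covered by Corollary \ref{cor:Simplex Best A, V} for the potential $A^s$. The $j$-dimensional faces of $S$ are in bijection with the $k$-element subsets of $\omega_N$, and for any such subset $\{z_1, \ldots, z_k\}$ the definition of $A$ gives $A(z_1, \ldots, z_k) = \mathrm{Vol}_{j}(F)$, where $F$ is the corresponding face. Since $A$ vanishes whenever two of its arguments coincide, only ordered $k$-tuples of distinct vertices contribute to $E_{A^s}(\omega_N)$, and each $j$-face accounts for exactly $k!$ such tuples. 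This yields the identity
\begin{equation*}
E_{A^s}(\omega_N) = \frac{k!}{N^k} \sum_{F \in \mathcal{F}_{j}} \mathrm{Vol}_{j}(F)^s.
\end{equation*}

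Since the prefactor $k!/N^k$ is a positive constant depending only on $d$ and $j$, maximizing the geometric functional \eqref{eq:T-functional} over all $d$-simplices inscribed in $\mathbb{S}^{d-1}$ is equivalent to maximizing $E_{A^s}(\omega_N)$ over all $(d+1)$-point configurations on $\mathbb{S}^{d-1}$ that form a non-degenerate simplex. By Corollary \ref{cor:Simplex Best A, V}, for $0 < s \le 2$ the latter maximum is attained uniquely (up to rotations) by the vertex sets of regular simplices inscribed in $\mathbb{S}^{d-1}$. Because the regular simplex is itself a non-degenerate simplex, the restriction to simplex vertex sets loses no maximizers and preserves uniqueness. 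The claim follows.

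The argument is essentially bookkeeping: the only step requiring care is the counting identity between ordered $k$-tuples and $j$-faces, and the verification that Corollary \ref{cor:Simplex Best A, V} applies in the full range $1 \le j \le d$. There is no substantive obstacle beyond this once the discrete $k$-point energy framework has been set up; all of the analytic content has already been absorbed into the corollary.
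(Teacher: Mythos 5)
Your proposal is correct and takes essentially the same route as the paper, which obtains this corollary directly by setting $K = A^s$ with $j = k-1$ in Corollary \ref{cor:Simplex Best A, V}; you have simply made explicit the bookkeeping identity $E_{A^s}(\omega_N) = \frac{k!}{N^k}\sum_{F \in \mathcal{F}_j}\mathrm{Vol}_j(F)^s$ and the (valid) observation that restricting from arbitrary $(d+1)$-point configurations to non-degenerate simplices loses no maximizers.
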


In the case $s = 1$, this generalizes the known results for $j=1$, i.e. the sum of distances between vertices \cite{F1}, $j=d-1$, i.e. the surface area \cite{Ta2}, and $j=d$, i.e. the volume \cite{Jo, Ta1, Ball2, HL}. We also note that \eqref{eq:T-functional} is a special case of the $T$-functional, which has received a fair amount of study, mostly in Stochastic Geometry (see e.g. \cite{A, GKT, HoLe, HMR, KMTT, KTT}).

\begin{remark}
We note that by adjusting the definition of the discrete energy $E_K$ to only include summands where all inputs are distinct, we can study lower-semicontinuous kernels $K: \Big(\mathbb{S}^{d-1} \Big)^k \rightarrow ( - \infty, \infty]$. In this case, if we define $f$ in the statement of Theorem \ref{thm: simplex best, general} as a decreasing, convex function $f: (0, \infty) \rightarrow \mathbb{R}$, with $f(0) = \lim_{x \rightarrow 0^+} f(x)$, an identical proof shows that the vertices of a regular simplex minimize $E_{f \circ B}$, as a generalization of \cite[Theorem 1.2]{CK}. In particular, as an extension of Corollary \ref{cor:Simplex Best A, V}, this shows that regular simplices are optimal for $-\log(A)$ and $-\log(V)$, as well as $A^{s}$ and $V^{s}$ for $s < 0$.
\end{remark}

\section{Semidefinite Programming and Three-input Volumes}\label{sec:SD}
{In this section we recall the basics of semidefinite programming and apply it to the maximization of integral functionals with three-input kernels. It will be shown that isotropic probability measures on $ \mathbb S^{d-1} $ maximize $ I_{V^2} $, where  $ V $ is  the $3$-volume, among all probability measures, while for the $2$-area energy integral $ I_{A^2} $, maximizers are isotropic and balanced probability measures. In Sections} \ref{sec:Powers of V} and \ref{sec:A^2 on Sphere}, {these results are generalized to larger numbers of inputs and to measures on $\mathbb{R}^d$.}

For brevity, we denote $u=\langle y,z \rangle$, $v=\langle v,z \rangle$, $t=\langle x,y \rangle$. We also take $\sigma$, as before, to be the uniform probability measure on the sphere $\mathbb{S}^{d-1}$. In \cite{BV}, Bachoc and Vallentin produced a class of infinite matrices and associated polynomials of the form
\begin{equation}\label{eq:SemDefProgYs}
(Y_{m}^d)_{i+1,j+1} (x,y,z) := Y_{m,i,j}^d(x,y,z) := P_i^{d + 2m}(u) P_{j}^{d +  2m}(v) Q_m^d(u,v,t),
\end{equation}
where $m,i,j \in \mathbb{N}_0$, $P_m^h$ is the normalized Gegenbauer polynomial of degree $m$ on $\mathbb{S}^{h-1}$ and
\begin{equation}\label{eq:BachocValQKernel}
Q_m^d(u,v,t) = ((1-u^2)(1-v^2))^{\frac{m}{2}}P_{m}^{d-1}\left(\frac{t-uv}{\sqrt{(1-u^2)(1-v^2)}}\right).
\end{equation}

\begin{remark}
Polynomials $Y_{m,i,j}^d$ in \cite{BV} were defined with certain coefficients which we omit here for the sake of simplicity.
\end{remark}

Here we provide the upper left $3\times 3$, $2 \times 2$, and $1 \times 1$ submatrices of infinite matrices $Y_0^d$, $Y_1^d$, and $Y_2^d$, respectively, which is all that we need for the rest of this section:
$$\begin{pmatrix} 
1 & v & \frac {dv^2-1} {d-1}\\
u & uv & u \frac {dv^2-1} {d-1}\\ 
\frac {dv^2-1} {d-1} & \frac {du^2-1} {d-1} v & \frac {du^2-1} {d-1} \frac {dv^2-1} {d-1}\end{pmatrix}$$
$$ \begin{pmatrix} t-uv & u (t-uv)\\ 
v(t-uv) & uv(t-uv)\end{pmatrix}, \begin{pmatrix} \frac {(d-1)(t-uv)^2-(1-u^2)(1-v^2)} {d-2}\end{pmatrix}.$$
By letting $\pi$ run through the group of all permutation of the variables $x$, $y$, and $z$, and averaging, they defined the following symmetric matrices and associated polynomials
\begin{equation*}
(S_{m}^d)_{i+1,j+1} (x,y,z) := S_{m,i,j}^d (x,y,z) := \frac{1}{6} \sum_{\pi} Y_{m,i,j}^d (\pi(x), \pi(y), \pi(z)).
\end{equation*}
These polynomials and matrices have a variety of nice properties: 
\begin{enumerate}
\item\label{i} For any $\mu \in \mathcal{P}(\mathbb{S}^{d-1})$ and $e \in \mathbb{S}^{d-1}$,
$$ \int_{\mathbb{S}^{d-1}} \int_{\mathbb{S}^{d-1}} Y_{m}^d(x,y,e)\, d\mu(x) d \mu(y)$$
and
$$ S_m^d(\mu) := \int_{\mathbb{S}^{d-1}} \int_{\mathbb{S}^{d-1}} \int_{\mathbb{S}^{d-1}} S_{m}^d(x,y,z)\, d\mu(x) d \mu(y) d \mu(z)$$
are positive semidefinite, i.e. all principal minors (formed by finite submatrices) are nonnegative. 
\item\label{ii} For $(m,i,j) \neq (0,0,0)$, $I_{S_{m,i,j}^d}(\sigma) = 0$, and for all $e \in \mathbb{S}^{d-1}$, $ I_{Y_{m,i,j}^d}(\sigma, \sigma, \delta_{e}) = 0$.
\item\label{iii} For $m \geq 1$ and $e \in \mathbb{S}^{d-1}$, $I_{S_{m,i,j}^d}(\delta_e) = I_{Y_{m,i,j}^d}(\delta_e) = 0$.
\end{enumerate}
We note that the paper  \cite{BV} was only concerned with finite point sets. However, the results naturally extend to the continuous setting, and \eqref{i} is simply the extension of Corollary 3.5 in \cite{BV}, while  \eqref{ii} follows from the construction of $Y_{m,i,j}$'s from spherical harmonics (see Theorem 3.1 and the preceding text, as well as equation (11), in \cite{BV}). Finally, \eqref{iii} follows from the fact that $Q_m^d(1,1,1) = 0$.

Now consider an infinite, symmetric, positive semidefinite matrix $A$ with finitely many nonzero entries. Then for any $m \geq 1$ and $\mu \in \mathcal{P}( \mathbb{S}^{d-1})$, $\Tr( S_m^d(\mu) A) \geq 0$, with equality if $\mu = \sigma$.  (Indeed, observe that,  for two matrices positive definite matrices $B=(b_{ij})$, $C = (c_{ij})$, Schur's theorem implies that the Hadamard product $B\circ C = (b_{ij} c_{ij})$ is positive definite, which leads to the inequality  $\Tr (BC) = \sum_{i,j} b_{ij} c_{ij} \ge 0$.) 

Likewise, let $A_0$ be an infinite, symmetric, positive semidefinite matrix $A$ with finitely many nonzero entries and such that all entries in the first row and first column are zeros. Then for any probability measure $\mu$, $\Tr( S_m^d(\mu) A_0) \geq 0$, with equality if $\mu = \sigma$. In this case, we require zeros in the first row and column due to the fact that $S_{0,0,0}^d$ is a constant, so we would not get equality for $\sigma$ in the above inequality. This gives us the following:
\begin{theorem}\label{thm:SemiDefMin}
Let $n \in \mathbb{N}_0$. For each $m \leq n$, let $A_m$ be an infinite, symmetric, positive semidefinite matrix with finitely many nonzero entries, with the additional requirement that $A_0$ has only zeros in its first row and first column. Let
$$K(x,y,z) = \sum_{m=0}^{n} \Tr( S_{m}^d(x,y,z)\, A_m).$$
Then $\sigma$ is a minimizer of $I_K$ over probability measures on the sphere $\mathbb{S}^{d-1}$.
\end{theorem}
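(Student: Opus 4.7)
The plan is to integrate $K$ termwise against $\mu \otimes \mu \otimes \mu$, recast the result as a finite sum of traces $\sum_{m=0}^n \Tr\bigl(S_m^d(\mu)\, A_m\bigr)$, establish nonnegativity of each summand via Schur's product theorem, and finally verify that every term vanishes at $\mu=\sigma$. Because each $A_m$ has only finitely many nonzero entries, Fubini lets us commute the triple integral with the finite sums defining the trace, producing
\begin{equation*}
I_K(\mu) \;=\; \sum_{m=0}^{n} \Tr\bigl(S_m^d(\mu)\, A_m\bigr).
\end{equation*}

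For the nonnegativity step, property \eqref{i} tells us that $S_m^d(\mu)$ is positive semidefinite for every $m$, while each $A_m$ is PSD by hypothesis. Restricting both matrices to the (finite) principal block on which $A_m$ is supported does not change the trace; Schur's product theorem then gives that $S_m^d(\mu) \circ A_m$ is PSD on that block, and since $\Tr(BC) = \mathbf 1^{T}(B\circ C)\mathbf 1$ for real symmetric matrices $B,C$, this forces $\Tr\bigl(S_m^d(\mu)\, A_m\bigr) \ge 0$. Summing in $m$, we obtain $I_K(\mu) \ge 0$ for every $\mu \in \mathcal P(\mathbb S^{d-1})$.

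To finish, I would show that $I_K(\sigma) = 0$. By property \eqref{ii}, $I_{S_{m,i,j}^d}(\sigma) = 0$ for all $(m,i,j) \neq (0,0,0)$, so $S_m^d(\sigma)$ is the zero matrix for every $m \ge 1$, and the only entry of $S_0^d(\sigma)$ that can be nonzero is the $(1,1)$ entry (coming from the constant polynomial $S_{0,0,0}^d \equiv 1$). The hypothesis that $A_0$ has zeros throughout its first row and first column then annihilates this last contribution, yielding $\Tr\bigl(S_0^d(\sigma)\, A_0\bigr) = 0$. Combining this with the previous paragraph gives $I_K(\sigma) = 0 \le I_K(\mu)$ for all $\mu$, as desired.

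There is no substantial obstacle here: the theorem is essentially a packaging of the semidefinite facts \eqref{i}--\eqref{ii} already recorded for the matrices $S_m^d$ together with the standard Schur/Hadamard trace identity. The only step that requires real care is the bookkeeping at $\mu=\sigma$, and it is precisely to make that bookkeeping work that the hypothesis singles out the first row and column of $A_0$.
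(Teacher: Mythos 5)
Your proof is correct and follows essentially the same route as the paper, which establishes the theorem in the discussion immediately preceding its statement: positivity of each $\Tr(S_m^d(\mu)A_m)$ via property \eqref{i} combined with Schur's product theorem and the identity $\Tr(BC)=\sum_{i,j}b_{ij}c_{ij}$, and vanishing at $\sigma$ via property \eqref{ii} together with the first-row/column condition on $A_0$ to kill the constant term $S_{0,0,0}^d$. Your write-up is just a more explicit packaging of that same argument.
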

\noindent Naturally, adding a constant to $K$  does not change this statement, and multiplying by $-1$ turns  it into a maximization result.  
Observe also that, when $A$ is a diagonal matrix, $\Tr( S_m^d A) $ is simply a positive linear combination of the diagonal elements of $ S_m^d $.  Theorem \ref{thm:SemiDefMin} is often applied in this way (see the proofs of Theorems \ref{thm:vol squared max} and \ref{thm: triangle area squared max} below), in close  analogy to Theorem \ref{thm:sch}.

\subsection{Volume of a parallelepiped}

Maximizing the sum of distances between points on a space (or the corresponding distance integrals)  is a very natural optimization problem for two-input kernels, and one which has garnered a fair amount of attention (see, e.g. \cite{B, BBS, BS, BD, BDM, Bj, F1, F2, Sk, St}), 
and, as mentioned in the introduction,  higher dimensional analogues, such as area and volume, yield natural extensions for kernels with more inputs. In this section we discuss such questions for $k=3$ inputs, focusing on volume squared and area squared, as these produce polynomials which are easier to work with.

We first consider the kernel
\begin{equation*}
K(x,y,z) = V^2(x,y,z) = \det\begin{pmatrix} 1 & u & v \\ u & 1 & t \\ v & t & 1 \end{pmatrix}=1-u^2-v^2-t^2+2uvt
\end{equation*}
on the sphere $\mathbb{S}^{d-1}$, with $d > 2$, and where $V(x,y,z)$ is the volume of the parallelepiped formed by the vectors $x$, $y$, and $z$.  As mentioned in \cite{BFGMPV1}, $-V^2$ is not 3-positive definite (modulo a constant) but as we show here, $\sigma$ is a minimizer of $I_{-V^2}$, i.e. a maximizer of $I_{V^2}$.

Indeed, we see that
\begin{equation}\label{eq:Volume^2 SDP decomposition}
V^2(x,y,z) = \frac{(d-1)(d-2)}{d^2} - \frac{(d-1)(d-2)}{d^2} S_{0,2,2}^d - \frac{4(d-2)}{d} S_{1,1,1}^d - \frac{(3d-4)(d-2)}{d (d-1)} S_{2,0,0}^d,
\end{equation}
so Theorem \ref{thm:SemiDefMin} tells us that $\sigma$ is a maximizer.
Moreover, since $V^2$ is a polynomial of degree two in every variable, and has no linear terms, any isotropic measure on the sphere is also a maximizer, and in fact this classifies all maximizers. 
\begin{theorem}\label{thm:vol squared max}
Isotropic probability measures on the sphere maximize $I_{V^2}$ over $\mathcal{P}(\mathbb{S}^{d-1})$.
\end{theorem}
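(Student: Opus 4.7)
The plan is to combine the semidefinite decomposition \eqref{eq:Volume^2 SDP decomposition} with a direct moment reduction, closing with Maclaurin's inequality for elementary symmetric polynomials.

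First I would apply Theorem \ref{thm:SemiDefMin} directly to \eqref{eq:Volume^2 SDP decomposition}: for $d \ge 3$ all three coefficients in front of the $S_{m,i,j}^d$ kernels are strictly negative, so the identity presents $V^2$ as a constant minus a non-negative linear combination of kernels having non-negative $\mu^3$-energies. This produces the a priori ceiling
\begin{equation*}
I_{V^2}(\mu) \le \frac{(d-1)(d-2)}{d^2},
\end{equation*}
attained at $\mu = \sigma$.

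To promote this single optimizer to the full claimed family, I would use that $V^2 = 1 - u^2 - v^2 - t^2 + 2uvt$ is a polynomial of degree at most $2$ in each of $x,y,z$ containing no monomial of odd degree in any single argument. Consequently $I_{V^2}(\mu)$ depends only on the second-moment matrix $T := \int_{\mathbb{S}^{d-1}} x x^T \, d\mu(x)$; a short computation using the identities $\int \int \langle x,y\rangle^2 \, d\mu(x) d\mu(y) = \Tr(T^2)$ and $\int \int \int \langle x,y\rangle \langle y,z\rangle \langle x,z\rangle \, d\mu(x) d\mu(y) d\mu(z) = \Tr(T^3)$ yields
\begin{equation*}
I_{V^2}(\mu) = 1 - 3\Tr(T^2) + 2\Tr(T^3).
\end{equation*}
Every isotropic measure shares the same $T = \frac{1}{d} I_d$ as $\sigma$ and therefore attains the ceiling.

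To rule out non-isotropic maximizers, I would write $T$ in its eigenbasis with eigenvalues $\lambda_1, \dots, \lambda_d \ge 0$ summing to $\Tr(T) = 1$. Newton's identities collapse $1 - 3 p_2 + 2 p_3$ into the single term $6\, e_3(\lambda)$, so $I_{V^2}(\mu) = 6\, e_3(\lambda)$. Maclaurin's inequality then gives $e_3(\lambda) \le \binom{d}{3}/d^3 = (d-1)(d-2)/(6 d^2)$ with equality if and only if $\lambda_1 = \cdots = \lambda_d$; together with $\sum_i \lambda_i = 1$ this forces $T = \frac{1}{d} I_d$, identifying the maximizers precisely with the isotropic probability measures on $\mathbb{S}^{d-1}$. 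The only potentially subtle aspect of the plan is noticing that the cubic trace combination reduces to $6\, e_3$, after which the classification follows with no further work.
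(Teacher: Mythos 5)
Your proposal is correct, and the two halves of it sit on either side of the paper's own argument. The first half (reading off the upper bound $\frac{(d-1)(d-2)}{d^2}$ from the decomposition \eqref{eq:Volume^2 SDP decomposition} via Theorem \ref{thm:SemiDefMin}) is exactly what the paper does. For the identification of the maximizers, however, the paper only remarks that $V^2$ is a polynomial of degree two in each variable with no linear terms, so that $I_{V^2}$ is determined by the second-moment matrix and all isotropic measures attain the value of $\sigma$, asserting without further detail that this classifies the maximizers. Your second half replaces that terse remark with a complete, self-contained computation: $I_{V^2}(\mu)=1-3\Tr(T^2)+2\Tr(T^3)=6\,e_3(\lambda)$ by Newton's identities (using $\Tr(T)=1$ on the sphere), and Maclaurin's inequality gives $e_3(\lambda)\le \binom{d}{3}/d^3$ with equality iff $T=\frac1d I_d$. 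This not only settles the classification rigorously but also reproves the upper bound, so your SDP step is logically redundant. What each approach buys: the moment/Maclaurin route is elementary, gives the full ``if and only if'' in one stroke, and is essentially the mechanism behind the general-$k$ result of Rankin and Cahill--Casazza quoted as Theorem \ref{thm:volume-gen} (where the answer is $k!\,e_k(\lambda)$); the SDP route is what the paper is deliberately showcasing, since it is the template that extends to the kernel $A^2$ and to the $k$-point bounds of Sections \ref{sec:k-point}--\ref{sec:A^2 on Sphere}, where no such clean moment reduction is available.
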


\subsection{Area of a triangle}
Using the same method as in Theorem \ref{thm:vol squared max}, we can show that $\sigma$ is a maximizer of $I_{A^2}$, where $A(x,y,z)$ is the area of a triangle, since
\begin{equation}\label{eq:Area^2 SDP decomposition}
A^2(x,y,z) = \frac{1}{4} \Big(3 \frac{d-1}{d} - 3 \frac{d-2}{d-1} S_{2,0,0}^d -6 S_{1,1,1}^d - 6S_{1,0,0}^d - 3 \frac{d-1}{d}S_{0,2,2}^d \Big).
\end{equation}

However, we can also prove this with a slightly different method, which acts as a special case of Theorem \ref{thm:vol squared max}, a more general statement that we will prove by means of $k$-point bounds.

\begin{theorem}\label{thm: triangle area squared max}
Suppose $d \geq 2$, and let $ A^2(x,y,z) $ be the square of the area of the triangle  with vertices at  $x$, $y$, $z\in \mathbb S^{d-1}$. Then the uniform surface measure $\sigma$ maximizes $I_{A^2} (\mu)$ over $\mathcal{P} (\mathbb{S}^{d-1})$. Moreover, any balanced, isotropic measure $\mu \in \mathcal{P} (\mathbb{S}^{d-1})$ maximizes $I_{A^2}$.
\end{theorem}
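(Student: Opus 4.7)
The plan is to verify the polynomial decomposition \eqref{eq:Area^2 SDP decomposition} of $A^2$ and then feed it through Theorem \ref{thm:SemiDefMin}. Both assertions of the theorem will then follow: $\sigma$ maximizes $I_{A^2}$ because the only non-constant terms in the decomposition have negative coefficients, and balanced isotropic $\mu$ are maximizers because each of those non-constant terms integrates to zero against $\mu^3$.

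\emph{Step 1 (main obstacle): verify the decomposition.} Starting from the Gram identity
\[
4 A^2(x,y,z) = (2-2t)(2-2v) - (1 + u - t - v)^2,
\]
where $t = \langle x,y\rangle$, $u = \langle y,z\rangle$, $v = \langle x,z\rangle$ (from the Gram determinant of the edge vectors $y-x$ and $z-x$), one expands $A^2$ as a symmetric polynomial in $t,u,v$ of degree at most two in each variable. The kernels $1$, $S_{1,0,0}^d$, $S_{2,0,0}^d$, $S_{1,1,1}^d$, $S_{0,2,2}^d$ all lie in this finite-dimensional space, and matching coefficients on the symmetric monomial families $1$, $t+u+v$, $tu+tv+uv$, $t^2+u^2+v^2$, $tuv$, and the quartic $u^2v^2$-type terms yields \eqref{eq:Area^2 SDP decomposition}. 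This algebraic check is unavoidable and is the only computationally heavy part of the argument.

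\emph{Step 2: $\sigma$ is a maximizer.} Once the decomposition is in hand, all four non-constant terms on the right of \eqref{eq:Area^2 SDP decomposition} have strictly negative coefficients. Choosing diagonal matrices $A_0, A_1, A_2$ with positive entries placed so that $\operatorname{Tr}(S_m^d A_m)$ extracts exactly $\tfrac{3(d-2)}{4(d-1)}S_{2,0,0}^d + \tfrac{3}{2}S_{1,1,1}^d + \tfrac{3}{2}S_{1,0,0}^d + \tfrac{3(d-1)}{4d}S_{0,2,2}^d$ —namely $A_0$ with a single positive entry at $(3,3)$ (so the first-row/column-zero requirement of Theorem \ref{thm:SemiDefMin} is satisfied), $A_1$ with positive entries at $(1,1)$ and $(2,2)$, and $A_2$ with a positive entry at $(1,1)$— one has
\[
\tfrac{3(d-1)}{4d} - A^2(x,y,z) = \sum_{m=0}^{2} \operatorname{Tr}\!\bigl(S_m^d(x,y,z)\,A_m\bigr).
\]
Theorem \ref{thm:SemiDefMin} then says $I$ of the right-hand side is minimized by $\sigma$, so $\sigma$ maximizes $I_{A^2}$.

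\emph{Step 3: balanced isotropic measures are also maximizers.} By the decomposition and linearity, it suffices to show $I_{S_{m,i,j}^d}(\mu)=0$ for each of the four relevant triples, and by permutation-symmetry of $\mu^{\otimes 3}$ this reduces to computing $I_{Y_{m,i,j}^d}(\mu)$. Using only $\int xx^T\,d\mu = \tfrac{1}{d}I_d$, Fubini gives $I_{Y_{0,2,2}^d}(\mu)=0$ by integrating out $y$ (since $\int P_2^d(\langle y,z\rangle)\,d\mu(y)=0$); and reduces $I_{Y_{2,0,0}^d}(\mu)$ and $I_{Y_{1,1,1}^d}(\mu)$ to the moment identities $\int \langle x,y\rangle^2\,d\mu^{\otimes 2} = \tfrac{1}{d}$ and $\int tuv\,d\mu^{\otimes 3} = \tfrac{1}{d^2}$, whose arithmetic produces cancellation in the polynomial expressions for $Y_{2,0,0}^d = \frac{(d-1)(t-uv)^2-(1-u^2)(1-v^2)}{d-2}$ and $Y_{1,1,1}^d = uv(t-uv)$. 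Finally $I_{Y_{1,0,0}^d}(\mu) = \int (t-uv)\,d\mu^{\otimes 3} = 0$ invokes the balanced condition $\int x\,d\mu = 0$, which kills both $\int t\,d\mu^{\otimes 2}$ and $\int uv\,d\mu^{\otimes 3}$. Hence $I_{A^2}(\mu) = \tfrac{3(d-1)}{4d} = I_{A^2}(\sigma)$, and $\mu$ is a maximizer.
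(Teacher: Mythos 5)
Your proof is correct, but it follows the route that the paper only sketches, not the one it actually writes out. You verify the full three-point decomposition \eqref{eq:Area^2 SDP decomposition} and push it through Theorem \ref{thm:SemiDefMin} with diagonal PSD matrices $A_0,A_1,A_2$, and your moment computations in Step 3 all check out: isotropy alone kills $S_{0,2,2}^d$, $S_{2,0,0}^d$ and $S_{1,1,1}^d$ (via $\int \langle y,z\rangle^2\,d\mu^{\otimes 2}=1/d$ and $\int tuv\,d\mu^{\otimes 3}=\int u^2v^2\,d\mu^{\otimes 3}=1/d^2$), while balancedness kills $S_{1,0,0}^d$. The paper instead uses the shorter identity \eqref{eq:Area^2 SDP Decomposition}, $A^2=\tfrac34-\tfrac32 S_{1,0,0}^d-\tfrac14(u^2+v^2+t^2)$, keeping only the single block $S_{1,0,0}^d$ and handling the quadratic part by the classical two-point frame bound, Lemma \ref{lem:frame}. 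That version buys three things: it avoids the coefficient matching you defer to ``the only computationally heavy part'' (which, if you want it cheaply, follows by combining $3S_{1,0,0}^d=(u+v+t)-(uv+vt+tu)$ with the identity $3\tfrac{d-2}{d-1}S_{2,0,0}^d+6S_{1,1,1}^d+3\tfrac{d-1}{d}S_{0,2,2}^d+\tfrac3d=u^2+v^2+t^2$ recorded right after the theorem); it works verbatim at $d=2$, where $S_{2,0,0}^d$ is strictly speaking undefined (its formula carries $d-2$ in the denominator and $P_2^1$ does not exist), so your decomposition survives there only because the coefficient $\tfrac{d-2}{d-1}$ vanishes; and the equality analysis in Lemma \ref{lem:frame} immediately yields the converse --- every maximizer must be balanced and isotropic --- which the theorem does not claim but the paper uses later in Corollary \ref{cor:Simplex Best A, V}. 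Your version, in exchange, stays entirely inside the semidefinite framework of Section \ref{sec:SD} and makes explicit that each Gegenbauer block vanishes on balanced isotropic measures, which is precisely the template generalized in Theorem \ref{thm:area-gen}.
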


\begin{proof}
Using Heron's formula, we express $A^2(x,y,z)$ via the scalar products of $x,y,z$:
\begin{equation}\label{eq:Area^2 SDP Decomposition}
A^2 (x,y,z)   =   \frac34  -\frac12 (u+v+t) + \frac12 (uv + vt +tu) - \frac{1}4 ( u^2 + v^2 + t^2) = \frac {3} {4} - \frac 3 2 S_{1,0,0}^d - \frac{1}4 ( u^2 + v^2 + t^2).
\end{equation}

Note that $\sigma$ minimizes both $S_{1,0,0}^d$ and $u^2+v^2+t^2$ by Theorem \ref{thm:SemiDefMin} and Lemma \ref{lem:frame}, respectively. Therefore, for all $\mu \in \mathcal{P}(\mu)$,
$$I_{A^2}(\mu)\leq \frac 3 4 - \frac 1 4 \cdot \frac 3 d = \frac {3(d-1)} {4d} = I_{A^2}(\sigma).$$
More generally, maximizers of $I_{A^2}$ must be isotropic measures in order to achieve the sharp bound from Lemma \ref{lem:frame} and must be balanced so that $S_{1,0,0}^d$ vanishes on them.
\end{proof}

An alternative proof of this result is also given in \cite[Theorem 6.7]{BFGMPV1}. We also would like to remark that since
\begin{equation*}
3 \frac{d-2}{d-1} S_{2,0,0}^d +6 S_{1,1,1}^d  + 3 \frac{d-1}{d}S_{0,2,2}^d + \frac{3}{d} = u^2 + v^2 + t^2,
\end{equation*}
which follows from \cite[Proposition 3.6]{BV}, Lemma \ref{lem:frame} follows from Theorem \ref{thm:SemiDefMin}, which demonstrates an instance of obtaining  $2$-point bounds from $3$-point bounds.

\section{Maximizing $k$-volumes}\label{sec:vol}

{
    This section collects results on maximization of volume integral functionals over probability measures with unit second moment in $ \mathbb R^d $, denoted as before by $ \mathcal P^*(\mathbb R^d) $, for $k\ge 3$ inputs. In some cases we will further restrict the supports of such measures to the unit sphere, thereby optimizing over $ \mathcal P(\mathbb S^{d-1}) $. 
   As  in the rest of the paper, we are interested in powers of  two kernels:  the $k$-dimensional Euclidean volume of the parallelepiped   
   $V(x_1,\ldots,x_k)$,  
   and the $(k-1)$-dimensional 
   volume of the simplex 
   $A (x_1,\ldots,x_k)$.}


\subsection{Maximizing the powers of  $V$}\label{sec:Powers of V}

As in the previous section, we start with  $ V^2 $, i.e.  the squared $k$-dimensional volume of a parallelepiped spanned by the vectors $x_1, \ldots, x_k$, equal to the determinant of the Gram matrix of the set of vectors $\{x_1,\ldots,x_k\}\subset \mathbb R^d$. Alternatively, $\frac 1 {(k!)^2} V^2(x_1, \ldots, x_k)$ can be seen as the square of the Euclidean volume of the simplex with vertices $0, x_1, \ldots, x_k$.  The following theorem (in a slightly different form) can be found in the literature: 

\begin{theorem}\label{thm:volume-gen} Let $d\ge 3 $ and $3\le k \le d$. 
The  set of maximizing measures of $I_{V^2}$ in $\mathcal{P}^*(\mathbb{R}^d)$ is the set of isotropic measures on $\mathbb{R}^d$. The value of the maximum is $\frac {k!}{d^k}\binom{d}{k}$.

As a corollary,  isotropic measures on $\mathbb S^{d-1}$ (which include the uniform surface measure $\sigma$) are exactly the  maximizers of  $I_{V^2}$ over $\mathcal{P}(\mathbb S^{d-1})$. 
\end{theorem}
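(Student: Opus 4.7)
The plan is to express $I_{V^2}(\mu)$ purely in terms of the eigenvalues of the second-moment matrix $M := \int_{\mathbb R^d} xx^T \, d\mu(x)$ and then invoke Maclaurin's inequality on the simplex where these eigenvalues live.

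First, I would let $X$ denote the $d \times k$ matrix with columns $x_1, \dots, x_k$, so that $V^2(x_1,\dots,x_k) = \det(X^T X)$. By the Cauchy--Binet formula (applicable since $k \le d$), one has $V^2 = \sum_{I \in \binom{[d]}{k}} \det(X_I)^2$, where $X_I$ is the $k \times k$ submatrix of $X$ formed by the rows indexed by $I$. The classical moment identity $\mathbb E[(\det Y)^2] = k! \det(\mathbb E[Y_1 Y_1^T])$ for a random $k \times k$ matrix $Y$ with i.i.d.\ columns $Y_1, \dots, Y_k$ --- a quick consequence of the Leibniz expansion together with the reparametrization $(\sigma,\tau) \mapsto (\sigma, \tau\sigma^{-1})$, which collapses one of the permutation sums into a factor of $k!$ --- then yields
\begin{equation*}
\int \det(X_I)^2 \, d\mu(x_1) \cdots d\mu(x_k) = k!\, \det(M_I),
\end{equation*}
where $M_I$ is the principal $k \times k$ submatrix of $M$ indexed by $I$. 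Summing over $I$ and using the standard linear-algebra fact that the sum of principal $k \times k$ minors of $M$ equals the $k$-th elementary symmetric polynomial $e_k$ of its eigenvalues $\lambda_1, \dots, \lambda_d$, one obtains the clean formula
\begin{equation*}
I_{V^2}(\mu) = k! \cdot e_k(\lambda_1, \dots, \lambda_d).
\end{equation*}

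Since $M$ is positive semidefinite the $\lambda_i$ are non-negative, and since $\mu \in \mathcal{P}^*(\mathbb R^d)$ their sum equals $\Tr(M) = \int \|x\|^2 \, d\mu = 1$. Maclaurin's inequality then gives $e_k \le \binom{d}{k}/d^k$, with equality if and only if $\lambda_1 = \dots = \lambda_d = 1/d$, equivalently $M = I_d/d$, equivalently $\mu$ is isotropic. This produces the asserted maximum value $\frac{k!}{d^k}\binom{d}{k}$ and precisely characterizes the maximizers. The corollary on the sphere is immediate: by Remark \ref{rem:proj} the functional $I_{V^2}$ is invariant under the projection $\pi: \mathcal{P}^*(\mathbb R^d) \to \mathcal{P}(\mathbb S^{d-1})$, which sends isotropic measures to isotropic measures; since $\mathcal{P}(\mathbb S^{d-1}) \subset \mathcal{P}^*(\mathbb R^d)$ and $\sigma$ is isotropic on the sphere (and hence already a maximizer), the same characterization transfers to $\mathcal{P}(\mathbb S^{d-1})$.

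No single step here is a genuine obstacle: the reduction of $I_{V^2}$ to a symmetric function of the spectrum of $M$ is standard moment calculus, and the maximization of $e_k$ on the probability simplex is textbook Maclaurin. The only place requiring mild care is the equality analysis in Maclaurin's inequality --- ensuring isotropy is necessary and not merely sufficient --- which follows from strict concavity of $(e_k/\binom{d}{k})^{1/k}$ on the open simplex.
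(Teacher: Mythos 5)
Your proposal is correct. Note, however, that the paper does not actually prove Theorem \ref{thm:volume-gen}: it defers entirely to the literature, citing Rankin for $k=d$ and Cahill--Casazza for general $k$, with the remark that their finite-set arguments ``work for measures in $\mathcal{P}^*(\mathbb{R}^d)$ with only minor adjustments,'' and it invokes Remark \ref{rem:proj} to reduce to the spherical case. What you have written is, in essence, a self-contained reconstruction of the Cahill--Casazza strategy carried out directly at the level of measures: Cauchy--Binet turns $V^2=\det(X^TX)$ into a sum of squared $k\times k$ minors, the i.i.d.\ moment identity $\e[(\det Y)^2]=k!\det(\e[Y_1Y_1^T])$ (your reparametrization $\tau\mapsto\tau\sigma^{-1}$ is exactly right, and integrability follows from the finite second moment) collapses $I_{V^2}(\mu)$ to $k!\,e_k(\lambda_1,\dots,\lambda_d)$ for the spectrum of the second-moment matrix, and Maclaurin's inequality under the constraint $\Tr M=1$ gives the bound $\binom{d}{k}/d^k$ with equality precisely at $M=\frac1d I_d$. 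This buys two things the paper leaves implicit: it avoids any appeal to the projection $\pi$ or to adapting finite-configuration proofs, and it makes the characterization of the equality case (isotropy as a necessary condition, which the paper needs later in Theorems \ref{thm:A^2maxGen} and \ref{thm:area-gen} and only supports by further citations to \cite{FNZ, P}) completely explicit. The one point to state carefully is the equality condition in Maclaurin: rather than arguing via strict concavity of $\bigl(e_k/\tbinom{d}{k}\bigr)^{1/k}$ (which is homogeneous of degree one and hence only strictly concave after restriction to the simplex), it is cleaner to quote the standard fact that the Maclaurin chain is strict unless all variables coincide; with $e_1=1$ this forces $\lambda_i=1/d$ for all $i$. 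The spherical corollary then follows exactly as you say, since a global maximizer ($\sigma$) lies in the subclass $\mathcal{P}(\mathbb S^{d-1})\subset\mathcal{P}^*(\mathbb R^d)$.
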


This theorem was proved by Rankin \cite{R} for $k=d$ and by Cahill and Casazza \cite{CC} in the general case. In both papers, the statements are for finite spherical sets but the proofs work for measures in $\mathcal{P}^*(\mathbb{R}^d)$ with only minor adjustments. We also note that due to Remark \ref{rem:proj}, it is sufficient to prove the result for the spherical case only, since $V^2$ is homogeneous of degree two. The equality case of Theorem \ref{thm:volume-gen} was also treated in a more general context in \cite{FNZ, P}.

{Theorem \ref{thm:volume-gen} allows one to characterize the minimizers of $ I_{V^s} $ with $ s>2 $ as well.}

\begin{corollary}\label{cor:V^s for s>2}
For $s > 2$, the energy $I_{V^s}$  on $\mathbb{S}^{d-1}$  is uniquely (up to rotations and central symmetry) maximized by the uniform measure on an orthonormal basis. 
\end{corollary}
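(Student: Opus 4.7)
The plan is to derive the corollary from Theorem \ref{thm:volume-gen} via the pointwise inequality $V^s \le V^2$ on $(\mathbb S^{d-1})^k$, and then to characterize the equality cases.

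First, since each $x_i\in\mathbb S^{d-1}$, Hadamard's inequality gives $V(x_1,\dots,x_k)\in[0,1]$, with $V=1$ precisely when the vectors are orthonormal and $V=0$ precisely when they are linearly dependent. For $s>2$ this yields $V^s\le V^2$ pointwise, and Theorem \ref{thm:volume-gen} then gives
\begin{equation*}
I_{V^s}(\mu) \;\le\; I_{V^2}(\mu) \;\le\; \frac{k!}{d^k}\binom{d}{k}\qquad\text{for all }\mu\in\mathcal{P}(\mathbb S^{d-1}).
\end{equation*}
For $\mu_\star=\frac{1}{d}\sum_{i=1}^d\delta_{e_i}$ with $\{e_i\}$ an orthonormal basis, a direct count of $k$-tuples with distinct entries gives $I_{V^s}(\mu_\star)=d(d-1)\cdots(d-k+1)/d^k=\frac{k!}{d^k}\binom{d}{k}$, so $\mu_\star$ is a maximizer.

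For uniqueness, suppose $\mu$ attains this maximum. Equality in the first inequality forces $V(x_1,\dots,x_k)\in\{0,1\}$ for $\mu^{\otimes k}$-almost every tuple; continuity of $V$ and closedness of $\{0,1\}$ then extend this to every tuple in $S^k$, where $S:=\supp(\mu)$. Equality in the second inequality combined with Theorem \ref{thm:volume-gen} forces $\mu$ to be isotropic, so in particular $\sp(S)=\mathbb R^d$. I would then extract an orthonormal basis from $S$: take any $d$ linearly independent $v_1,\dots,v_d\in S$; for any fixed pair $v_i,v_j$ pick a $k$-subset containing both, those $k$ vectors being linearly independent must satisfy $V=1$, i.e.\ are orthonormal. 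Thus the $v_i$'s are pairwise orthogonal unit vectors, i.e.\ an orthonormal basis, which I relabel $\{e_i\}_{i=1}^d\subset S$.

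It remains to show $S\subset\{\pm e_1,\dots,\pm e_d\}$. Given $x\in S$, I examine $V(x,e_{i_1},\dots,e_{i_{k-1}})\in\{0,1\}$ over all $(k-1)$-subsets $I=\{i_1,\dots,i_{k-1}\}\subset\{1,\dots,d\}$: the value $V=0$ means $x\in\sp(e_i:i\in I)$, while $V=1$ means $x\perp e_i$ for all $i\in I$. The first alternative cannot hold for every $I$ simultaneously (otherwise $x$ lies in every $(k-1)$-coordinate subspace, forcing $x=0$), so pick $I$ with $V=1$. Then for each $j\notin I$, swapping some $i_0\in I$ out of $I$ for $j$ and comparing the two alternatives on the modified subset forces either $x\perp e_j$ or $x=\pm e_j$; since $x\ne 0$, exactly one $j$ must satisfy $x=\pm e_j$. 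Finally, isotropy $\int xx^T\,d\mu=\frac{1}{d}I_d$ yields $\mu(\{e_i\})+\mu(\{-e_i\})=\frac{1}{d}$ for each $i$; since $V^s$ is invariant under flipping the sign of any single argument, these measures all agree with the stated uniform measure up to rotations and central symmetry. The main obstacle is the final structural step: pinning down $S\subset\{\pm e_i\}$ using only the fact that $V\in\{0,1\}$ on $S^k$ together with $\sp(S)=\mathbb R^d$, which requires careful index bookkeeping to handle the case $d>k$ where a single choice of $I$ does not localize $x$ to a single axis.
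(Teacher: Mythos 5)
Your proof is correct and follows essentially the same route as the paper: the pointwise bound $V^s\le V^2$ for $s>2$, the maximum value from Theorem \ref{thm:volume-gen}, and the observation that equality forces $V\in\{0,1\}$ on the support together with isotropy. The only difference is that you carry out in full the structural argument pinning the support to $\{\pm e_1,\dots,\pm e_d\}$, which the paper asserts in one sentence; your index bookkeeping there is sound.
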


\begin{proof}
For $s > 2$, $V^2(x_1, \ldots, x_k) \geq V^s(x_1, \ldots, x_k)$ for all $x_1, \ldots, x_k \in \mathbb{S}^{d-1}$, with equality exactly when $x_1, \ldots, x_k$ is an orthonormal set (so the volume is $1$) or when $x_1, \ldots, x_k$ are linearly dependent (so the volume is $0$). Thus, for all $\mu \in \mathcal{P}(\mathbb{S}^{d-1})$,
\begin{equation}
\frac {k!}{d^k}\binom{d}{k} \geq I_{V^2}(\mu) \geq I_{V^s}(\mu).
\end{equation}
The first inequality becomes an equality if $\mu$ is isotropic, and the second inequality becomes equality if any $x_1, \ldots, x_k \in \operatorname{supp}(\mu)$ are either orthonormal or linearly dependent. Since the support of an isotropic measure must be full-dimensional, both of these conditions  occur simultaneously  if and only if $\mu(\{- e_j, e_j\}) = \frac{1}{d}$ for $j = 1, \ldots, d$ for some orthonormal basis $e_1, \ldots, e_d$.
\end{proof}

It is easy to see that the uniform distribution on an orthonormal basis is not a maximizer of $V^s$ for $0 < s < 2$.

\subsection{Maximizing the powers of $A$}\label{sec:Powers of A}

We now turn to the powers of  $A (x_1,\ldots,x_k)$, the $(k-1)$-dimensional  volume of the simplex with vertices $x_1, \ldots, x_k$,  and again start by considering the kernel $A^2$. 
   The result of Theorem \ref{thm:volume-gen} for $V^2$ can be used to obtain a similar statement for the measures in $\mathcal{P}^*(\mathbb{R}^d)$ maximizing $ I_{A^2} $, in the case of  $k= d+1$ inputs, i.e when the simplex is full-dimensional. The main idea is to embed $ \mathbb R^d $ into $ \mathbb R^{d+1} $, treat the value of $ A^2(x_1,\ldots,x_{d+1}) $ as the value of $ V^2(y_1,\ldots,y_{d+1}) $ for a suitable $ y_1,..., y_k $, and then use Theorem \ref{thm:volume-gen}. 
   
   We shall defer the calculation of the maximal value of $ I_{A^2} $ until Theorem~\ref{thm:area-gen}, which also gives an alternative proof of the characterization of maximizers on the sphere $\mathbb S^{d-1}$, moreover, addressing the case of {\emph{any}} number of inputs $3\le k \le d+1$, rather than just $k=d+1$ as in the theorem below. However, the result of this section, Theorem \ref{thm:A^2maxGen}, applies to measures on $\mathbb R^d$, while Theorem~\ref{thm:area-gen} is restricted to the sphere. 

\begin{theorem}\label{thm:A^2maxGen}
For $d \geq 2$ and $k = d+1$, maximizers of $I_{A^2}$ in $\mathcal{P}^*(\mathbb{R}^d)$ are the balanced, isotropic probability measures on $\mathbb{R}^d$.
\end{theorem}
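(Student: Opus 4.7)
The plan is to reduce the $A^2$-maximization problem in $\R^d$ to the $V^2$-maximization problem in $\R^{d+1}$, which is already resolved by Theorem~\ref{thm:volume-gen}. The key device is the affine embedding $\psi : \R^d \to \R^{d+1}$, $\psi(x) = (cx, a)$, where the constants $c, a > 0$ will be chosen optimally. For any $\mu \in \mathcal{P}^*(\R^d)$, the pushforward satisfies $\int \|\psi(x)\|^2\, d\mu(x) = c^2 + a^2$, so imposing $c^2 + a^2 = 1$ places $\psi_\#\mu \in \mathcal{P}^*(\R^{d+1})$.

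First, I would establish the key algebraic identity between the two kernels. Subtracting the first column from the remaining columns of the relevant $(d+1)\times(d+1)$ determinant and expanding along the constant last row gives
\begin{equation*}
V(\psi(x_1), \dots, \psi(x_{d+1})) = a\, c^d\, d!\, A(x_1, \dots, x_{d+1}),
\end{equation*}
and hence $I_{V^2}(\psi_\#\mu) = a^2 c^{2d} (d!)^2\, I_{A^2}(\mu)$. Next, applying Theorem~\ref{thm:volume-gen} in $\R^{d+1}$ with $k = d+1$ inputs yields $I_{V^2}(\psi_\#\mu) \leq \frac{(d+1)!}{(d+1)^{d+1}}$, which rearranges to the bound $I_{A^2}(\mu) \leq \frac{d+1}{a^2 c^{2d}\, d!\,(d+1)^{d+1}}$. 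This is tightest when $a^2 c^{2d}$ is maximized subject to $a^2 + c^2 = 1$; a routine one-variable optimization gives $c^2 = \tfrac{d}{d+1}$ and $a^2 = \tfrac{1}{d+1}$, leading to the clean upper bound $I_{A^2}(\mu) \leq \frac{d+1}{d^d\, d!}$.

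The last step is to identify the equality cases via the block structure of the second-moment matrix of $\psi_\#\mu$:
\begin{equation*}
\int_{\R^{d+1}} y y^T\, d(\psi_\#\mu)(y) = \begin{pmatrix} c^2 \int x x^T\, d\mu & c a \int x\, d\mu \\ c a \int x^T\, d\mu & a^2 \end{pmatrix}.
\end{equation*}
Equality in Theorem~\ref{thm:volume-gen} requires this matrix to equal $\tfrac{1}{d+1} I_{d+1}$. The bottom-right entry $a^2 = \tfrac{1}{d+1}$ holds automatically by our choice; the off-diagonal block forces $\mu$ to be balanced; and the top-left block, combined with $c^2 = \tfrac{d}{d+1}$, reduces precisely to the isotropy condition $\int x x^T\, d\mu = \tfrac{1}{d} I_d$ on $\R^d$. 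Conversely, every balanced isotropic $\mu$ makes $\psi_\#\mu$ isotropic in $\R^{d+1}$ and so attains the bound; such $\mu$ exist (e.g.~the uniform measure on the vertices of a regular $d$-simplex inscribed in $\S^{d-1}$), so the set of maximizers is precisely the balanced isotropic measures on $\R^d$.

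The main obstacle is conceptual rather than technical: the naive embedding $\psi(x) = (x, 1)$ places $\psi_\#\mu$ on an affine hyperplane of $\R^{d+1}$ where it can never be isotropic for $d \geq 2$, so the resulting bound on $I_{A^2}$ would not be sharp. The essential insight is that the two parameters $c$ and $a$ must be jointly tuned so that the optimal bound on $I_{V^2}(\psi_\#\mu)$ is simultaneously the sharpest possible bound on $I_{A^2}(\mu)$ and attained exactly when $\mu$ is both balanced and isotropic in $\R^d$.
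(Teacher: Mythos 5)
Your proposal is correct and takes essentially the same route as the paper: an affine embedding $x \mapsto (cx,a)$ of $\R^d$ into $\R^{d+1}$ turning $A^2$ into a constant multiple of $V^2$, an appeal to Theorem~\ref{thm:volume-gen}, and an equality analysis via the block structure of the second-moment matrix of the pushforward; your optimal constants $c^2 = d/(d+1)$, $a^2 = 1/(d+1)$ are exactly the map $\psi$ the paper writes down directly. The only difference is presentational --- you derive the constants by maximizing $a^2c^{2d}$ subject to $a^2+c^2=1$, while the paper simply states them --- and your identity $V(\psi(x_1),\dots,\psi(x_{d+1})) = a\,c^d\,d!\,A(x_1,\dots,x_{d+1})$ and the resulting value $\frac{d+1}{d^d\,d!}$ both check out.
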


\begin{proof}
In this proof, we say that a measure $ \mu $ on $ \mathbb R^d $ is $ d $-isotropic if equation \eqref{eq:isotropic_init} holds. 
Given a unit basis vector $ e_{d+1}\in \mathbb R^{d+1} $, we identify $ \mathbb R^d $ with the hyperplane in $ \mathbb R^{d+1} $, orthogonal to $ e_{d+1} $ and passing through the origin.

To reduce the value of $ I_{A^2} $ to that of $ I_{V^2} $, given a measure $ \mu $ on $ \mathbb R^d $, denote its pushforward to $\mathbb R^{d+1}$ by $\hat\mu$:
\begin{equation}
    \label{eq:muhat}
    \hat\mu:= \psi_\# \mu,
\end{equation}
where the map $ \psi: \mathbb R^d \to \mathbb R^{d+1} $ is
\[
    \psi(x) :=  \sqrt{\frac{d}{d+1}}x + \frac1{\sqrt{d+1}}e_{d+1}.
\]
It is understood here that $ x\in\mathbb R^d \subset \mathbb R^{d+1} $, so the addition in right-hand side is in \(\mathbb R^{d+1}\).  

Recall that a $ (d+1) $-dimensional simplex with base of $ d $-dimensional volume $ S $ and height $ h $ has $ (d+1) $-dimensional volume  ${ S h}/{(d+1)} $. This gives,  with $V^2 = V^2(x_1,\ldots,x_{d+1}) $ the square of the $ (d+1) $-dimensional volume of the parallelepiped spanned by its inputs,
\begin{equation}\label{eq:AtoV}
I_{V^2}(\hat\mu) = (d!)^2 \frac{d^{d}}{(d+1)^{d+1}}I_{A^2}(\mu),
\end{equation}
where we account for the fact that $ V $ includes a factor of $ (d+1)! $, whereas $ A $ does not. 

By Theorem \ref{thm:volume-gen}, the functional $I_{V^2}$ on the left-hand side of equation~\eqref{eq:AtoV} is maximized over $\mathcal{P}^*(\mathbb{R}^{d+1})$ exactly when $\hat{\mu}$ is isotropic. To finish the proof, it remains to observe that the pushforward $ \hat\mu = \psi_\#\mu $ is $ (d+1)$-isotropic if and only if $ \mu $ is $ d $-isotropic and balanced. Indeed, writing $ x = (x^{(1)}, \ldots, x^{(d+1)}) = (y, x^{(d+1)}) $, we have for $ 1\leq i \leq j \leq d+1 $
\begin{equation}
    \label{eq:isotropyGamma}
    \int_{\mathbb{R}^{d+1}} x^{(i)}x^{(j)}\, d\hat\mu(x) =
    \begin{cases}
        \frac{d}{d+1} \int_{\mathbb{R}^d} y^{(i)}y^{(j)}\, d\mu(y) &  j \leq d,\\
        \frac{\sqrt{d }}{d+1}  \int_{\mathbb{R}^d} y^{(i)}\, d\mu(y) & i \leq d, \  j=d+1,\\
        \frac{1}{d+1} & i=j=d+1.
    \end{cases}
\end{equation}
By \eqref{eq:isotropic_init}, $ (d+1) $-isotropy of $ \hat\mu $ means the integral in the left-hand side of \eqref{eq:isotropyGamma} is equal to $ \delta_{ij}/(d+1) $, $ 1\leq i\leq j\leq d+1 $. In particular, then the first integral in the right-hand side is equal to $ \delta_{ij}/d $, which is precisely the condition for $ d $-isotropy of $ \mu $, and the integrals of $ y^{(i)} $ are all equal to zero, implying $ \mu $ is balanced. The converse implications obviously follows along the same lines. 
\end{proof}

We can use the same methods as in Corollary \ref{cor:V^s for s>2} to find the maximizers of $ I_{A^s} $, for larger powers, on the sphere $\mathbb S^{d-1}$.
\begin{corollary}\label{cor:A^s for s>2}
Let $s > 2$ and $A(x_1, \ldots, x_{d+1})$ be the $d$-dimensional volume of a simplex with vertices $x_1, \ldots, x_{d+1} \in \mathbb{S}^{d-1}$. Then $I_{A^s}$ is uniquely (up to rotations) maximized by the uniform distribution on the vertices of a regular $d$ simplex.
\end{corollary}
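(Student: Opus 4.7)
The plan is to mirror the strategy of Corollary~\ref{cor:V^s for s>2}: sandwich $I_{A^s}$ between a multiple of $I_{A^2}$ and a constant via a pointwise inequality, invoke Theorem~\ref{thm:area-gen} to bound $I_{A^2}$, and then analyze the equality cases to pin down $\mu$ up to rotation.

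Let $A_{\max}$ denote the maximum of $A$ on $(\S^{d-1})^{d+1}$, attained exactly on the vertex sets of regular $d$-simplices inscribed in $\S^{d-1}$; this is the classical fact that the regular simplex maximizes volume among simplices inscribed in a sphere, which also follows from Corollary~\ref{cor:Simplex Best A, V} applied with $s=1$. Since $s>2$, the factorization $A^s = A^{s-2}\cdot A^2$ yields the pointwise bound $A^s \le A_{\max}^{s-2}\, A^2$ on $(\S^{d-1})^{d+1}$, with equality if and only if $A\in\{0,A_{\max}\}$. Integrating and applying Theorem~\ref{thm:area-gen} gives
\[
I_{A^s}(\mu) \;\le\; A_{\max}^{s-2}\, I_{A^2}(\mu) \;\le\; A_{\max}^{s-2}\, I_{A^2}(\sigma),
\]
where the second inequality is an equality precisely for balanced isotropic $\mu$. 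The uniform measure $\mu^*$ on the vertices $v_1,\ldots,v_{d+1}$ of a regular inscribed $d$-simplex attains this bound: it is balanced and isotropic (the vertices form a unit-norm tight frame with centroid at the origin), while any $(d+1)$-tuple drawn from its support either exhausts all vertices (giving $A=A_{\max}$) or contains a repetition (giving $A=0$), so both inequalities become equalities.

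For uniqueness, let $\mu$ be any maximizer. Equality in the second step forces $\mu$ to be balanced and isotropic; equality in the first, combined with the continuity of $A$, forces $A\in\{0,A_{\max}\}$ on all of $\supp(\mu)^{d+1}$. Since $\mu$ is balanced and isotropic, $\supp(\mu)$ cannot lie in any affine hyperplane and therefore contains $d+1$ affinely independent points $v_1,\ldots,v_{d+1}$; these must then satisfy $A(v_1,\ldots,v_{d+1})=A_{\max}$ and hence form a regular simplex.

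The main obstacle is ruling out additional points in $\supp(\mu)$. For any candidate $w\in\supp(\mu)\setminus\{v_1,\ldots,v_{d+1}\}$ and each $i$, the tuple obtained by replacing $v_i$ with $w$ must have $A\in\{0,A_{\max}\}$. A short computation shows that the only two points of $\R^d$ equidistant from the remaining vertices at the regular simplex edge length are $v_i$ itself and its reflection across the opposite face, whose norm is $(d+2)/d\neq 1$; so the case $A=A_{\max}$ would force $w=v_i$, contradicting $w\notin\{v_1,\ldots,v_{d+1}\}$. Hence $A=0$, which places $w$ in the affine hyperplane $\{\langle x,v_i\rangle=-1/d\}$. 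Imposing this for every $i$ and summing, the regular-simplex identity $\sum_i v_i=0$ yields the contradiction $0=-(d+1)/d$. Thus $\supp(\mu)=\{v_1,\ldots,v_{d+1}\}$, and since any $d$ of the $v_i$'s are linearly independent (the only linear relation among them being $\sum v_i=0$), the balanced condition together with $\sum_i \mu(\{v_i\})=1$ forces uniform weights, giving $\mu=\mu^*$ up to the choice of inscribed simplex.
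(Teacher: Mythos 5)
Your proposal is correct and follows essentially the same route as the paper: the pointwise sandwich $A^s \le A_{\max}^{s-2}A^2$ with equality iff $A\in\{0,A_{\max}\}$, followed by Theorem \ref{thm:area-gen} to bound $I_{A^2}$ and an analysis of when both inequalities are tight. Your treatment of the equality case (ruling out extra support points via the reflection of norm $(d+2)/d$ and the hyperplane conditions $\langle w,v_i\rangle=-1/d$, then forcing uniform weights) is a correct and more detailed justification of a step the paper states without proof.
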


\begin{proof}
We know that $A(x_1, \ldots, x_{d+1})$ is maximized exactly when $x_1, \ldots, x_{d+1}$ are the vertices of a regular simplex (see, e.g. \cite{Jo, Ta1, Ball2, HL}, see also the case $j=d$ of Corollary \ref{cor:Geometric Simplex is best}). Let $\alpha$ be that maximum volume. We see that for $s > 2$,
\begin{equation*}
 A^2(x_1, \ldots, x_{d+1}) \geq \alpha^{2-s} A^s(x_1, \ldots, x_{d+1})
\end{equation*}
for all $x_1, \ldots, x_{d+1} \in \mathbb{S}^{d-1}$, with equality exactly when $A(x_1, \ldots, x_{d+1})$ is $0$ or $\alpha$.

We know that, for all $\mu \in \mathcal{P}(\mathbb{S}^{d-1})$
\begin{equation}
\frac{d+1}{d! d^d} \geq I_{A^2}(\mu) \geq \alpha^{2-s} I_{A^s}(\mu).
\end{equation}
The first inequality becomes an equality when $\mu$ is balanced and isotropic, and the second becomes an equality when $A(x_1, \ldots, x_{d+1})$ is $0$ or $\alpha$ for all $x_1, \ldots, x_{d+1} \in \operatorname{supp}(\mu)$. These both occur exactly when $\mu$ is the uniform distribution on the vertices of a regular $d$-simplex.
\end{proof}

For $0 < s < 2$, the uniform distribution on the vertices of a regular simplex is not a maximizer of $I_{A^s}$. It is also not clear which measures maximize $I_{A^s}$ for $2<s$ for general  $k< d+1$. We conjecture that the maximizers are again discrete in this case (see the discussion at the end of Section~\ref{sec:A^2 on Sphere}).

\section{Kernels for \texorpdfstring{$k$}{k}-point bounds}\label{sec:k-point}

In this section, we explain how to construct a class of continuous (in certain cases, polynomial) kernels which are $k$-positive definite and whose energy is minimized by $\sigma$. These kernels are a generalization of those developed by Bachoc and Vallentin in \cite{BV}, and similar to the kernels given in \cite{Mu} and \cite{DMOV}, all of which were used for obtaining $k$-point semidefinite programming bounds. We provide a slight alteration to these kernels, so that the inputs are no longer restricted to being linearly independent, or constrained to some proper subset of the sphere.

Consider the  points $\{x_1,\ldots, x_k\}\subset \mathbb S^{d-1}$ with $k \leq d+1$. Suppose that $x_3,\ldots, x_k$ are linearly independent and $x_1, x_2 \not\in X = \sp\{x_3, \ldots, x_k\}$, and denote the orthogonal projections of $x_1$ and $x_2$ onto $X^{\perp}$ as $y_1$ and $y_2$, respectively. Then the normalized vectors $\frac{y_1}{\|y_1\|}$ and $\frac{y_2}{\|y_2\|}$ belong to the unit sphere in the $(d-k+2)$-dimensional space $X^{\perp}$. If $k \leq d$, then on this unit sphere, the kernel given by $P_l^{d-k+2}( \langle x, y \rangle)$ is positive definite, suggesting that we may be able to build a $k$-positive definite kernel from
\begin{equation}\label{eq:SDP Building Block}
P_l^{d-k+2}\Big( \Big\langle \frac{y_1}{\|y_1\|}, \frac{y_2}{\|y_2\|} \Big\rangle \Big).
\end{equation}
If $k = d+1$, then $\frac{y_1}{\|y_1\|}, \frac{y_2}{\|y_2\|} \in \mathbb{S}^0 = \{ -1, 1 \}$, and we see that $1$ and $\frac{y_1}{\|y_1\|} \frac{y_2}{\|y_2\|}$ make a basis for positive definite functions. Of course, for $l > 0$, $P_l^{d-k+2}\Big( \Big\langle \frac{y_1}{\|y_1\|}, \frac{y_2}{\|y_2\|} \Big\rangle \Big)$ is not well defined, as a function of $x_1, \ldots, x_k$, if $x_1$ or $x_2$ is in $X$, and may not be continuous whenever the dimension of $X$ changes. We can modify \eqref{eq:SDP Building Block} to account for these issues, arriving at the following polynomial kernel. In what follows, we denote $W$ to be the Gram matrix of $x_3, \ldots, x_k$, and we set $P^1_0 =1$ and $P^1_1(t) = t$ (these are the only cases when $P^1_j$  are defined).

\begin{theorem}\label{thm:Qdef}
With the notation above, for any $l \in \mathbb{N}_0$, the function $Q_{k,l}^{d}: \Big( \mathbb{S}^{d-1} \Big)^k \rightarrow \mathbb{R}$ defined by
\begin{equation}\label{eq:Qdef}
Q_{k,l}^{d}( x_1, \ldots, x_k) = \det(W)^l  \|  y_1  \|^{l}  \| y_2 \|^{l} P_l^{d-k+2}\Big( \Big\langle \frac{y_1}{\|y_1\|}, \frac{y_2}{\|y_2\|} \Big\rangle \Big)
\end{equation}
is a rotationally-invariant $k$-positive definite polynomial kernel and $I_{Q_{k,l}^d}$ is minimized by $\sigma$.
\end{theorem}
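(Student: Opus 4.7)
My plan is to establish the four claims (polynomial, rotationally invariant, $k$-positive definite, $\sigma$ a minimizer) in turn. The key tool for positive definiteness will be the Gegenbauer addition formula, and the minimization claim will follow from Proposition~\ref{prop:kPosDef and Energy Zero is Min} once the rotational symmetry of $\sigma$ under the stabilizer of $X$ is exploited. The main technical hurdle is the polynomiality: we must track how the prefactor $\det(W)^l$ absorbs the denominators arising from Cramer's rule and the normalizations in \eqref{eq:Qdef}.

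For polynomiality, write $y_i = x_i - \pi_X x_i$ with $\pi_X x_i = \sum_{j=3}^k \alpha_{ij} x_j$. By Cramer's rule the $\alpha_{ij}$ are ratios of Gram subdeterminants over $\det(W)$, so $\det(W) y_i$ is a vector polynomial in the Gram entries $\langle x_a, x_b \rangle$. Since $y_i \in X^{\perp}$, one has $\|y_i\|^2 = \langle y_i, x_i \rangle$ and $\langle y_1, y_2 \rangle = \langle y_1, x_2 \rangle = \langle x_1, y_2 \rangle$, so $\det(W) \|y_i\|^2$ and $\det(W) \langle y_1, y_2 \rangle$ are polynomials in the Gram entries. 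Expanding $P_l^{d-k+2}(t) = \sum_{j=0}^{\lfloor l/2 \rfloor} a_j\, t^{l-2j}$,
\[
\|y_1\|^l \|y_2\|^l P_l^{d-k+2}\!\!\left( \frac{\langle y_1, y_2 \rangle}{\|y_1\| \|y_2\|}\right) = \sum_j a_j\, \langle y_1, y_2 \rangle^{l-2j}\, \|y_1\|^{2j}\, \|y_2\|^{2j},
\]
and the factor $\det(W)^l = \det(W)^{l-2j} \det(W)^j \det(W)^j$ distributes across the three monomials so that each resulting factor is polynomial. Rotational invariance is then immediate, since $\det(W)$, $\|y_i\|^2$, and $\langle y_1, y_2 \rangle$ depend only on scalar products.

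For $k$-positive definiteness, fix $x_3, \ldots, x_k$. If $\det(W) = 0$ the kernel vanishes; otherwise apply the Gegenbauer addition formula on the unit sphere of $X^{\perp}$, which is isometric to $\mathbb{S}^{d-k+1}$:
\[
P_l^{d-k+2}(\langle u, v \rangle) = \frac{1}{h_l} \sum_m H_m(u)\, H_m(v),
\]
where $\{H_m\}$ is an orthonormal basis of degree-$l$ spherical harmonics and $h_l$ its cardinality (for $k = d+1$, use $P_l^1(t) = t^l$ on $\mathbb{S}^0 = \{\pm 1\}$). Each $H_m$ extends to a homogeneous harmonic polynomial of degree $l$ on $X^{\perp}$, giving $\|y\|^l H_m(y/\|y\|) = H_m(y)$. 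Since $y = \pi_{X^{\perp}} x$ is linear in $x$, the functions $g_m(x) := H_m(\pi_{X^{\perp}} x)$ are polynomials on $\mathbb{R}^d$, and we obtain
\[
Q_{k,l}^{d}(x_1, x_2, x_3, \ldots, x_k) = \frac{\det(W)^l}{h_l} \sum_m g_m(x_1)\, g_m(x_2),
\]
a manifestly positive definite kernel in $(x_1, x_2)$.

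Finally, to conclude that $\sigma$ minimizes $I_{Q_{k,l}^{d}}$, by Proposition~\ref{prop:kPosDef and Energy Zero is Min} it suffices to show $I_{Q_{k,l}^{d}}(\sigma) = 0$ (the case $l=0$ is trivial, as $Q_{k,0}^{d} \equiv 1$ makes $I_{Q}$ constant). For fixed $x_3, \ldots, x_k$,
\[
\int \!\!\int Q_{k,l}^{d}(x_1, x_2, x_3, \ldots, x_k)\, d\sigma(x_1)\, d\sigma(x_2) = \frac{\det(W)^l}{h_l} \sum_m \left( \int g_m\, d\sigma \right)^{\!2},
\]
so it is enough to show $\int g_m\, d\sigma = 0$ for $l \geq 1$. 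Rotations of $\mathbb{R}^d$ fixing $X$ pointwise preserve $\sigma$ and act as arbitrary rotations of $X^{\perp}$; hence the pushforward of $\sigma$ under $\pi_{X^{\perp}}$ is rotationally symmetric on $X^{\perp}$. Integrating a homogeneous harmonic polynomial of positive degree against any rotationally invariant measure vanishes (decompose into spherical shells and use that $H_m$ has mean zero on $\mathbb{S}(X^{\perp})$), completing the proof after integrating over the remaining inputs $x_3, \ldots, x_k$.
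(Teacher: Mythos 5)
Your proof is correct and follows essentially the same route as the paper: polynomiality by absorbing $\det(W)^l$ into the Cramer's-rule denominators after expanding the Gegenbauer polynomial, $k$-positive definiteness via the addition formula on the unit sphere of $X^{\perp}$ (yielding a sum of squares for the two-point energy of the potential function), and vanishing of the $\sigma$-energy from the mean-zero property of spherical harmonics of positive degree. The only substantive difference is in the final step, where you deduce $\int Y_j(z_1)\,\|y_1\|^{l}\, d\sigma(x_1)=0$ from the rotational invariance of the pushforward of $\sigma$ under the projection onto $X^{\perp}$, whereas the paper computes the same integral explicitly via the slice-integration formula of Lemma A.5.4 in \cite{DX}; both arguments are valid.
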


We note that these kernels are symmetric in the last $k-2$ variables as well as   in the first two variables.

\begin{proof}
Note that $Q_{k,0}^d = 1$, so our claim holds in these cases. Now, assume that $l \in \mathbb{N}$.

Rotational-invariance follows immediately from the rotational-invariance of $W$, $\| y_1 \|$, $\|y_2\|$ and $\langle y_1, y_2 \rangle$.

In what follows, we denote $u_{i,j} = \langle x_i, x_j \rangle$ for all $i$ and $j$, and for $h \in \{ 1, 2\}$, $w_h = (u_{h,3}, \ldots, u_{h,k})^T$, $y_h^{\perp} = x_h - y_h$, and $z_h = \frac{y_h}{\|y_h\|}$.

We first must show that our kernel $Q_{k,l}^{d}$ is well-defined. Write  $y_1^{\perp} = \sum_{j=3}^{k} \alpha_j x_j$ and $y_2^{\perp} = \sum_{j=3}^{k} \beta_j x_j$, and denote $\alpha = ( \alpha_3 , \ldots, \alpha_k)^T$ and $\beta = ( \beta_3, \ldots, \beta_k)^T$. Since for $3 \leq j \leq k$,
\begin{equation*}
u_{1,j} = \langle x_1, x_j \rangle = \langle y_1^{\perp}, x_j \rangle =  \sum_{i=3}^{k} \alpha_i u_{i,j} \quad \text{ and } \quad u_{2,j} = \langle x_2, x_j \rangle = \langle y_2^{\perp}, x_j \rangle =  \sum_{i=3}^{k} \beta_i u_{i,j} 
\end{equation*}
we conclude that $w_1 = W \alpha$ and $w_2 = W \beta$.

Now assume that $x_3, \ldots, x_k$ are linearly independent. Consequently, $\alpha = W^{-1} w_1$ and $\beta = W^{-1} w_2$, so
\begin{equation}
\langle y_1^{\perp}, y_2^{\perp} \rangle = \alpha^T W \beta = w_1^T W^{-1} W W^{-1} w_2 = w_1^T W^{-1} w_2,
\end{equation}
and similarly
\begin{equation}
\langle y_1^{\perp}, y_1^{\perp} \rangle = w_1^T W^{-1} w_1 \text{ and } \langle y_2^{\perp}, y_2^{\perp} \rangle = w_2^T W^{-1} w_2.
\end{equation}
We then see that
\begin{equation}
\langle y_1, y_2 \rangle =  \langle x_1, x_2 \rangle - \langle y_1^{\perp}, y_2^{\perp} \rangle = u_{1,2} - w_1^T W^{-1} w_2,
\end{equation}
\begin{equation}
\| y_1 \|^2 = 1 - w_1^T W^{-1} w_1 \text{ and } \| y_2 \|^2 = 1 - w_2^T W^{-1} w_2.
\end{equation}
Thus, if $x_1, x_2 \not\in X$, we can rewrite \eqref{eq:Qdef} as
\begin{align}
Q_{k,l}^d (x_1, \ldots, x_k) = \det(W)^l \Big( \big(1-w_1^T W^{-1} w_1 \big) & \big(1-w_2^T W^{-1} w_2 \big) \Big)^{l/2}  \\ \nonumber &   \times P_l^{d-k+2}\left(\frac {u_{1,2} - w_1^T W^{-1} w_2} {\sqrt{1-w_1^T W^{-1} w_1}\sqrt{1- w_2^T W^{-1} w_2}} \right).
\end{align}

Letting $P_l^{d-k+2}(t) = \sum_{m=0}^{ \lfloor \frac{l}{2} \rfloor} a_{l-2m} t^{l-2m}$, we see that
\begin{align}\label{eq:Qexpansion1}
Q_{k,l}^{d}( x_1, \ldots, x_k) = \sum_{m=0}^{ \lfloor \frac{l}{2} \rfloor} a_{l-2m} & \Big( \det(W) u_{1,2} -w_1^{T} \adj(W) w_2 \Big)^{l - 2m} \; \Big(\det(W) -w_1^{T} \adj(W) w_1  \Big)^{m} \; \\ \nonumber & \times \Big(\det(W) -w_2^{T} \adj(W) w_2  \Big)^{m},
\end{align}
where $\adj(W)$ is the adjugate matrix of $W$. This is a polynomial of the inner products of $x_1,\ldots, x_k$, and so is well defined for all $x_1, \ldots, x_k \in \mathbb{S}^{d-1}$.

In addition, by rewriting \eqref{eq:Qdef} as
\begin{equation}\label{eq:Qexpansion2}
Q_{k,l}^{d}( x_1, \ldots, x_k) = \det(W)^l \sum_{m=0}^{ \lfloor \frac{l}{2} \rfloor} a_{l-2m} \langle y_1, y_2 \rangle^{l - 2m} \|y_1\|^{m} \|y_2\|^{m},
\end{equation}
for $k \leq d$ and
\begin{equation}\label{eq:Qexpansion2, k=d+1}
Q_{d+1,1}^d(x_1, \ldots, x_k) = \det(W) \langle y_1, y_2 \rangle,
\end{equation} 
we see that $Q_{k,l}^d$ is zero if $x_3, \ldots, x_k$ are linearly dependent.

If $k = d+1$ and $l =1$, \eqref{eq:Qexpansion2, k=d+1} shows us that for any fixed $x_3, \ldots, x_{d+1} \in \mathbb{S}^{d-1}$ and $\mu \in \mathcal{M}(\mathbb{S}^{d-1})$,
\begin{equation}\label{eq:I_Q k = d+1}
I_{U_{Q_{d+1,l}^d}^{x_3, \ldots, x_{d+1}}}(\mu) = \int_{\mathbb{S}^{d-1}} \int_{\mathbb{S}^{d-1}} \det(W) y_1 y_2\, d\mu(x_1) d\mu( x_2) = \det(W) \Big( \int_{\mathbb{S}^{d-1}} y_1 d \mu(x_1) \Big)^2 \geq 0,
\end{equation}
so $Q_{k, 1}^d$ is $k$-positive definite. Note that since $W$ is the Gram matrix of $x_3, \ldots, x_k$, its determinant is nonnegative. We now want to show that this energy is zero when $\mu = \sigma$. We first note that this occurs if $x_3, \ldots, x_k$ are linearly dependent, so let us assume that $x_3,\ldots, x_k$ are linearly independent, and set $f( y_1^{\perp}, y_1) = f(x_1) = y_1$. Denoting the unit ball in $\mathbb{R}^n$ as $\mathbb{B}^n$, we have, by Lemma A.5.4 of \cite{DX}, that
\begin{align*}
\int_{\mathbb{S}^{d-1}} f(x_1)\, d\sigma(x_1) & = \int_{\mathbb{B}^{d-1}} \frac{f(y_1^{\perp}, \sqrt{ 1 - \| y_1^{\perp}\|^2}) + f(y_1^{\perp}, -\sqrt{ 1 - \| y_1^{\perp}\|^2})}{\sqrt{ 1 - \| y_1^{\perp}\|^2}} d y_1^{\perp} \\
& = \int_{\mathbb{B}^{d-1}} \frac{\sqrt{ 1 - \| y_1^{\perp}\|^2} + (-\sqrt{ 1 - \| y_1^{\perp}\|^2})}{\sqrt{ 1 - \| y_1^{\perp}\|^2}} d y_1^{\perp} = 0. \\
\end{align*}
It now follows from \eqref{eq:I_Q k = d+1} that $I_{Q_{d+1,1}^d}(\sigma) = 0$, so $\sigma$ minimizes $I_{Q_{d+1,1}^d}$.

When $k \leq d$, we need a bit more machinery. Let $Y_1,\ldots Y_{\dim(\mathcal{H}_l^{d-k+1})}$ be an orthonormal basis of $\mathcal{H}_l^{d-k+1}$, the space of spherical harmonics of degree $l$ on $\mathbb{S}^{d-k+1}$. Then the addition formula (see \cite[Theorem 1.2.6]{DX}) tells us that
\begin{equation}\label{eq:addition formula for Q}
Q_{k,l}^{d}( x_1, \ldots, x_k) = \det(W)^l \|  y_1  \|^{l} \|  y_2 \|^{l} \frac{1}{\dim(\mathcal{H}_l^{d-k+1})} \sum_{j=1}^{\dim(\mathcal{H}_l^{d-k+1})} Y_j( z_1) Y_j(z_2).
\end{equation}
Thus for any fixed $x_3, \ldots, x_k$ and $\mu \in \mathcal{M}(\mathbb{S}^{d-1})$, 
\begin{align*}
I_{U_{Q_{k,l}^d}^{x_3, \ldots, x_k}}(\mu) & = \int_{\mathbb{S}^{d-1}} \int_{\mathbb{S}^{d-1}} Q_{k,l}^d( x_1, x_2, \ldots, x_k)\, d\mu(x_1) d\mu(x_2) \\
& = \int_{\mathbb{S}^{d-1}} \int_{\mathbb{S}^{d-1}}\det(W)^l \|  y_1  \|^{l} \|  y_2 \|^{l} \sum_{j=1}^{\dim(\mathcal{H}_l^{d-k+1})} Y_j( z_1) Y_j(z_2) \,d\mu(x_1) d\mu(x_2) \\
& = \det(W)^l \sum_{j=1}^{\dim(\mathcal{H}_l^{d-k+1})} \Bigg( \int_{\mathbb{S}^{d-1}} Y_j( z_1) \|  y_1  \|^{l} d \mu(x_1) \Bigg)^2 \geq 0.
\end{align*}
Note that since $W$ is the Gram matrix of $x_3, \ldots, x_k$, its determinant is nonnegative. Thus, $Q_{k,l}^d$ is indeed $k$-positive definite, so for any $\mu \in \mathcal{P}(\mathbb{S}^{d-1})$, $I_{Q_{k,l}^d}(\mu) \geq 0$.

We now show that $\sigma$ minimizes the energy $I_{Q_{k,l}^d}$. For any fixed $x_3, \ldots, x_k \in \mathbb{S}^{d-1}$, we see that
$$I_{U_{Q_{k,l}^d}^{x_3, \ldots, x_k}}(\sigma) = \det(W)^l \sum_{j=1}^{\dim(\mathcal{H}_l^{d-k+1})} \Bigg( \int_{\mathbb{S}^{d-1}} Y_j( z_1) \|  y_1  \|^{l} d \sigma(x_1) \Bigg)^2.$$
If $x_3, \ldots, x_k$ are linearly dependent, we know this is zero. Assume that $x_3, \ldots, x_k$ are linearly independent, and for $1 \leq j \leq  \dim(\mathcal{H}_l^{d-k+1})$, let
$$f_j(x_1) = f_j( y_1^{\perp}, y_1) = Y_j( z_1) \|  y_1  \|^{l} .$$
By Lemma A.5.4 of \cite{DX}, we have that
\begin{align*}
\int_{\mathbb{S}^{d-1}} f_j(x_1) d \sigma (x_1) & = \int_{\mathbb{B}^{k-2}} ( 1 - \|y_1^{\perp} \|^2 )^{\frac{d-k}{2}} \left[ \int_{\mathbb{S}^{d-k+1}} f_j( y_1^{\perp} , \sqrt{1 - \| y_1^{\perp} \|^2} \xi ) d \sigma (\xi) \right] dy_1^{\perp} \\
& = \int_{\mathbb{B}^{k-2}} ( 1 - \| y_1^{\perp} \|^2 )^{\frac{d-k}{2}} \left[ \int_{\mathbb{S}^{d-k+1}} Y_j(\xi) (1 - \| y_1^{\perp} \|^2)^{\frac{l}{2}}  d \sigma (\xi) \right] dy_1^{\perp} \\
& = 0.
\end{align*}
Thus, for any fixed $x_3, \ldots, x_k \in \mathbb{S}^{d-1}$,
$$ \int_{ \mathbb{S}^{d-1}} \int_{\mathbb{S}^{d-1}} Q_{k,l}^d(x_1, x_2, \ldots, x_k) d \sigma(x_1) d \sigma(x_2) = 0,$$
meaning that
$$ I_{Q_{k,l}^d}(\sigma) = 0,$$
so $\sigma$ is indeed a minimizer of $I_{Q_{k,l}^d}$.

\end{proof}
We note here that $Q_{k,l}^d$ is zero if $x_3, \ldots, x_k$ are linearly dependent or if $x_1$ or $x_2$ are in $X$.

For $k=3$, these kernels are essentially \eqref{eq:BachocValQKernel}, introduced by Bachoc and Vallentin \cite{BV}. In this instance, note that $\det(W) =1$ is constant. The general case was covered by Musin \cite{Mu} who used the kernels to formulate general SDP bounds for spherical codes and, with some additional machinery, generalized the result of Schoenberg \cite{Sch} to characterize all positive definite kernels invariant under the stabilizer of $X$. However, in that setting, it was assumed that $x_3, \ldots, x_k$ were fixed and linearly independent, so no factor such as $\det(W)^l$ was included and the functions were only really functions of two variables. Recently, similar kernels with $k\geq 4$ were used for finding new bounds for sizes of equiangular sets of lines in \cite{DMOV}, where kernels were constructed in a way that assumed that distance set the last $k-2$ inputs had finitely many values, making them multivariate functions, but not allowing the last $k-2$ inputs to take arbitrary values on the sphere. The authors of \cite{DMOV} even discuss the difficulty of such a task. Our inclusion of $\det(W)$ as a factor of $Q$ allows us to address this issue, though alone, this would not allow us to construct functions which are not constant when $x_3, \ldots, x_k$ are linearly dependent, or more complicated positive definite functions, such as semidefinite combinations of the functions \eqref{eq:SemDefProgYs}. We discuss how to construct such functions later in this section.

For the main result of Section \ref{sec:A^2 on Sphere}, it is sufficient to use the case $l=1$ so we formulate the relevant statement as a separate lemma.

\begin{lemma}\label{lem:Musin-1}
For any set of fixed vectors $x_3, \ldots, x_k\in\mathbb{S}^{d-1}$, the kernel
$$Q_{k,1}^d (x_1, \ldots, x_k) = \det(W) \langle y_1, y_2 \rangle =  \det(W)u_{1,2} - w_1^T \adj(W) w_2  $$
is $k$-positive definite, and $I_{Q_{k,1}^d}$ is minimized by $\sigma$.
\end{lemma}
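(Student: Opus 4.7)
The lemma is essentially the $l=1$ specialization of Theorem \ref{thm:Qdef}, and the plan is to extract that special case while reconciling the two explicit expressions in the statement with the general definition \eqref{eq:Qdef}.

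First I would note that $P_1^{d-k+2}(t)=t$, so evaluating \eqref{eq:Qdef} at $l=1$ gives
\[
Q_{k,1}^d(x_1,\ldots,x_k) \;=\; \det(W)\,\|y_1\|\,\|y_2\|\,\Big\langle\frac{y_1}{\|y_1\|},\frac{y_2}{\|y_2\|}\Big\rangle \;=\; \det(W)\langle y_1,y_2\rangle
\]
on the open set where $x_3,\ldots,x_k$ are linearly independent and $x_1,x_2\notin X$. This matches the first expression in the lemma. To establish the polynomial form, I would invoke the identity $\langle y_1,y_2\rangle = u_{1,2} - w_1^T W^{-1}w_2$ that is derived in the proof of Theorem \ref{thm:Qdef} for linearly independent $x_3,\ldots,x_k$, multiply through by $\det(W)$, and apply the standard relation $\adj(W) = \det(W)\,W^{-1}$ to obtain $\det(W)\langle y_1,y_2\rangle = \det(W)u_{1,2} - w_1^T\adj(W)w_2$. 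Since both sides are polynomials in the Gram entries $u_{i,j}$, the identity extends by continuity to all of $(\mathbb{S}^{d-1})^k$, covering degenerate configurations where $\det(W)=0$.

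The $k$-positive definiteness of $Q_{k,1}^d$ and the fact that $\sigma$ minimizes $I_{Q_{k,1}^d}$ then follow immediately from Theorem \ref{thm:Qdef} applied with $l=1$. Because the lemma is merely a restatement of that more general result in its simplest non-trivial case, there is no genuine obstacle here: the substantive work—building $Q_{k,l}^d$ via orthogonal projections, absorbing the singular $1/\|y_h\|$ factors into the $\det(W)^l$ prefactor, and recognizing $I_{U_{Q_{k,l}^d}^{x_3,\ldots,x_k}}(\mu)$ as a sum of squares via the addition formula for spherical harmonics on $\mathbb{S}^{d-k+1}$—was already carried out in the proof of Theorem \ref{thm:Qdef}.
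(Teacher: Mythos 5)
Your proposal is correct and matches the paper's treatment: the paper states Lemma \ref{lem:Musin-1} precisely as the $l=1$ specialization of Theorem \ref{thm:Qdef}, with the polynomial form $\det(W)u_{1,2}-w_1^T\adj(W)w_2$ coming from \eqref{eq:Qexpansion1} (equivalently, from $P_1^{d-k+2}(t)=t$ together with $\adj(W)=\det(W)W^{-1}$ and continuity). No separate argument is needed beyond what you give.
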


For small values of $k$, these kernels take the form:
\begin{align*}
Q_{3,1}^d &= u_{1,2} - u_{1,3}u_{2,3},\\
Q_{4,1}^d &= u_{1,2} -u_{1,2}u_{3,4}^2 - u_{1,3}u_{2,3} - u_{1,4}u_{2,4} + u_{1,3}u_{2,4}u_{3,4} + u_{1,4}u_{2,3}u_{3,4}.
\end{align*}

We can use these kernels $Q_{k,l}^d$ to construct various other kernels which are $k$-positive definite and whose energies are minimized by $\sigma$. Similar objects were studied in \cite{KV}.

\begin{corollary}\label{cor:MoreGenPosDef}
Let $G: (\mathbb{S}^{d-1} )^{k-1} \rightarrow \mathbb{R}$ be a continuous function such that, for $\eta_1, \eta_2, \ldots, \eta_{k-1} \in \mathbb{S}^{d-1}$, $G(\eta_1, \ldots, \eta_{k-1})$ depends only on the inner products $\langle \eta_i, \eta_j \rangle$, $1 \leq i < j \leq k-1$. Then the kernel
\begin{equation}\label{eq:MorGenPosDef}
T( x_1, x_2, \ldots, x_k) = G(x_1, x_3, \ldots, x_k) G( x_2, x_3,\ldots, x_k) Q_{k,l}^d( x_1, x_2, \ldots, x_k )
\end{equation}
is rotationally-invariant and $k$-positive definite. If $l \geq 1$, $T$ satisfies
\begin{equation}
\inf_{\mu \in \mathcal{P}(\mathbb{S}^{d-1})} I_{T}(\mu) = I_{T}(\sigma) = 0.
\end{equation}
\end{corollary}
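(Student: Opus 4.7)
I will derive all three claims from the corresponding properties of $Q_{k,l}^d$ established in Theorem \ref{thm:Qdef}, with the hypothesis on $G$ supplying the crucial extra leverage. Rotational invariance of $T$ is immediate: $G$ is invariant as a function of inner products, and $Q_{k,l}^d$ is invariant by Theorem \ref{thm:Qdef}, so their product is too. For $k$-positive definiteness, fix $x_3, \ldots, x_k \in \mathbb{S}^{d-1}$ and set $g(x) := G(x, x_3, \ldots, x_k)$, which is continuous on $\mathbb{S}^{d-1}$. For any signed Borel measure $\nu \in \mathcal{M}(\mathbb{S}^{d-1})$, the weighted measure $d\tilde\nu(x) := g(x)\, d\nu(x)$ also lies in $\mathcal{M}(\mathbb{S}^{d-1})$, and
\[
I_{U_T^{x_3,\ldots,x_k}}(\nu) \;=\; I_{U_{Q_{k,l}^d}^{x_3,\ldots,x_k}}(\tilde\nu) \;\geq\; 0
\]
by Theorem \ref{thm:Qdef}; equivalently, Schur's product theorem applies to the rank-one positive semidefinite kernel $g(x_1)g(x_2)$ and the positive definite kernel $U_{Q_{k,l}^d}^{x_3,\ldots,x_k}$.

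For the energy assertion with $l \geq 1$, by Proposition \ref{prop:kPosDef and Energy Zero is Min} it suffices to show $I_T(\sigma) = 0$. Integrating first over $(x_1, x_2)$ with $x_3, \ldots, x_k$ held fixed, the crucial observation is that, since $x_3, \ldots, x_k$ span $X$ while $y_1 \in X^\perp$, one has $\langle x_1, x_j \rangle = \langle y_1^\perp, x_j \rangle$ for $3 \leq j \leq k$; hence $g(x_1)$ depends on $x_1$ only through the component $y_1^\perp$, say $g(x_1) = \tilde g(y_1^\perp)$. When $x_3, \ldots, x_k$ are linearly dependent $\det(W) = 0$ and $Q_{k,l}^d$ vanishes; otherwise, the addition-formula identity \eqref{eq:addition formula for Q} from the proof of Theorem \ref{thm:Qdef} rewrites the inner integral as
\[
\frac{\det(W)^l}{\dim \mathcal{H}_l^{d-k+1}} \sum_j \left( \int_{\mathbb{S}^{d-1}} \tilde g(y_1^\perp)\, Y_j(z_1)\, \|y_1\|^l \, d\sigma(x_1) \right)^2.
\]
Applying Lemma A.5.4 of \cite{DX} with the slicing $x_1 = y_1^\perp + \|y_1\| z_1$ reduces each factor to a radial integral of $\tilde g$ multiplied by $\int_{\mathbb{S}^{d-k+1}} Y_j(\xi)\, d\sigma(\xi) = 0$, since $Y_j$ has positive degree $l \geq 1$. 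The boundary case $k = d+1$ (where $\mathbb{S}^{d-k+1} = \mathbb{S}^0$) is handled by the same parity argument as in \eqref{eq:I_Q k = d+1}. Thus $I_T(\sigma) = 0$, and the energy statement follows.

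The main obstacle is precisely this last step: verifying that after inserting $G$, the inner spherical-harmonic integral still vanishes. This is where the hypothesis on $G$ is essential, because it forces $g$ to factor through $y_1^\perp$, leaving the $\xi$-integral as a pure spherical harmonic of positive degree against the uniform measure on $\mathbb{S}^{d-k+1}$. Without this structural restriction, $g$ would in general couple nontrivially to $z_1$, and the vanishing would fail.
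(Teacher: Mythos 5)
Your proposal is correct and follows essentially the same route as the paper: the key points (reducing $k$-positive definiteness to that of $Q_{k,l}^d$, observing that $G(x_1,x_3,\ldots,x_k)$ factors through $y_1^\perp$, and using the addition formula together with Lemma A.5.4 of \cite{DX} so that the inner integral of $Y_j$ over $\mathbb{S}^{d-k+1}$ vanishes for $l\geq 1$) are exactly those in the paper's argument. Your weighted-measure phrasing $d\tilde\nu = g\,d\nu$ of the positive-definiteness step is a mild repackaging of the paper's sum-of-squares identity, not a different method.
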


From the way we defined $T$, we can see that $T$ is indeed continuous and symmetric in the first two variables.

\begin{proof}
We will use the same notation as in the proof of Theorem \ref{thm:Qdef}. We see immediately that the rotational-invariance of $T$ follows from the rotational-invariance of $Q_{k,l}^d$ and the inner products $\langle x_i, x_j \rangle$.  We also have that for fixed $x_3, \ldots, x_k$, $G(x_i, x_3, \ldots, x_k)$ depends only on $y_i^{\perp}$, the orthogonal projection of $x_i$ onto $X$.

For $k \leq d$, that $T$ is $k$-positive definite can be seen by the fact that for fixed $x_3, \ldots, x_k$ and $\mu \in \mathcal{M}(\mathbb{S}^{d-1})$, \eqref{eq:addition formula for Q} gives us
$$I_{U_T^{x_3, \ldots, x_k}}(\mu) = \det(W)^l \sum_{j=1}^{\dim(\mathcal{H}_l^{d-k+1})} \Bigg( \int_{\mathbb{S}^{d-1}} Y_j( z_1) \|  y_1  \|^{l} G( x_1, x_3,\ldots x_k) d \mu(x_1) \Bigg)^2 \geq 0.$$

If $ x_3, \ldots, x_k$ are linearly dependent, then $T = 0$, so assume that $x_3, \ldots, x_k$ are linearly independent,  and for $1 \leq j \leq  \dim(\mathcal{H}_l^{d-k+1})$, let
$$f_j(x_1) = f_j( y_1^{\perp}, y_1) = Y_j( z_1) \|  y_1  \|^{l} G( x_1, x_3,\ldots x_k).$$
By Lemma A.5.4 of \cite{DX}, and since $G$ does not depend on $y_1$, we have
\begin{align*}
\int_{\mathbb{S}^{d-1}} f_j(x_1) d \sigma (x_1) & = \int_{\mathbb{B}^{k-2}} ( 1 - \| y_1^{\perp} \|^2 )^{\frac{d-k}{2}} \left[ \int_{\mathbb{S}^{d-k+1}} f_j( y_1^{\perp} , \sqrt{1 - \| y_1^{\perp} \|^2} \xi ) d \sigma (\xi) \right] dy_1^{\perp} \\
& = \int_{\mathbb{B}^{k-2}} ( 1 - \| y_1^{\perp} \|^2 )^{\frac{d-k}{2}} (1 - \| y_1^{\perp} \|^2)^{\frac{l}{2}} G(x_1, x_3,\ldots x_k) \left[ \int_{\mathbb{S}^{d-k+1}} Y_j(\xi)   d \sigma (\xi) \right] d y_1^{\perp} \\
& = 0.
\end{align*}
Thus, for any fixed $x_3, \ldots, x_k \in \mathbb{S}^{d-1}$,
$$ \int_{ \mathbb{S}^{d-1}} \int_{\mathbb{S}^{d-1}} T(x_1, x_2, \ldots, x_k) d \sigma(x_1) d \sigma(x_2) = 0,$$
meaning that
$$ I_{T}(\sigma) = 0,$$
so $\sigma$ is indeed a minimizer of $I_{T}$.

The case of $k = d+1$ is similar.

\end{proof}

\begin{lemma}\label{lem:l=0PosDef}
Let $G: \big( \mathbb{S}^{d-1} \big)^{k-1} \rightarrow \mathbb{R}$ be continuous, depend only on the inner products of its inputs, and satisfy $\int_{\mathbb{S}^{d-1}} G( \eta_1, \ldots, \eta_{k-1}) d \sigma(\eta_1) = 0$. Then the kernel
\begin{equation}\label{eq:l=0PosDef}
H(x_1, x_2, \ldots, x_k) = G(x_1, x_3, \ldots, x_k) G( x_2, x_3, \ldots, x_k)
\end{equation}
is rotationally-invariant, $k$-positive definite, and satisfies
\begin{equation}
\inf_{\mu \in \mathcal{P}(\mathbb{S}^{d-1})} I_H(\mu) = I_H(\sigma) = 0.
\end{equation}
\end{lemma}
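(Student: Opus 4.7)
The proof of Lemma \ref{lem:l=0PosDef} is essentially an $l=0$ analogue of Corollary \ref{cor:MoreGenPosDef}, but without the need to invoke addition formulas, so the argument should be considerably shorter. The plan is to verify the three claimed properties in turn, treating $k$-positive definiteness as a rank-one decomposition of the potential function, and then to identify $\sigma$ as a minimizer by appealing to Proposition \ref{prop:kPosDef and Energy Zero is Min}.

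Rotational invariance of $H$ follows at once: since $G$ depends only on the pairwise inner products of its inputs, $G(Rx_1, Rx_3, \ldots, Rx_k) = G(x_1, x_3, \ldots, x_k)$ for any rotation $R \in O(d)$, and likewise for the second factor, so $H$ is invariant under the diagonal action of $O(d)$.

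For $k$-positive definiteness, fix $z_3, \ldots, z_k \in \mathbb{S}^{d-1}$ and set $g(x) := G(x, z_3, \ldots, z_k)$. The potential function factors as
\begin{equation*}
U_H^{z_3, \ldots, z_k}(x, y) = g(x)\, g(y),
\end{equation*}
so for any signed Borel measure $\nu \in \mathcal{M}(\mathbb{S}^{d-1})$,
\begin{equation*}
\int_{\mathbb{S}^{d-1}}\!\int_{\mathbb{S}^{d-1}} U_H^{z_3, \ldots, z_k}(x, y)\, d\nu(x)\, d\nu(y) \;=\; \Bigl(\int_{\mathbb{S}^{d-1}} g(x)\, d\nu(x)\Bigr)^{\!2} \;\geq\; 0.
\end{equation*}
Thus $U_H^{z_3, \ldots, z_k}$ is positive definite on $\mathbb{S}^{d-1}$, which is the definition of $k$-positive definiteness of $H$.

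To see that $I_H(\sigma) = 0$, fix $z_3, \ldots, z_k \in \mathbb{S}^{d-1}$. Because $G$ depends only on inner products, for each such choice the hypothesis $\int_{\mathbb{S}^{d-1}} G(\eta_1, \ldots, \eta_{k-1})\, d\sigma(\eta_1) = 0$ (applied after a rotation that sends a fixed reference frame onto $z_3, \ldots, z_k$, using the $O(d)$-invariance of $\sigma$) gives $\int_{\mathbb{S}^{d-1}} g(x)\, d\sigma(x) = 0$. Plugging $\nu = \sigma$ into the displayed identity above yields $I_{U_H^{z_3, \ldots, z_k}}(\sigma) = 0$, and integrating over $(z_3, \ldots, z_k) \in (\mathbb{S}^{d-1})^{k-2}$ against $\sigma^{\otimes (k-2)}$ (Fubini) shows $I_H(\sigma) = 0$. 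Since $H$ is $k$-positive definite, Proposition \ref{prop:kPosDef and Energy Zero is Min} then implies that $\sigma$ minimizes $I_H$ over $\mathcal{P}(\mathbb{S}^{d-1})$, so in particular $\inf_{\mu \in \mathcal{P}(\mathbb{S}^{d-1})} I_H(\mu) = I_H(\sigma) = 0$. No real obstacle is expected; the only point to handle carefully is the reduction from the single-variable vanishing integral assumption to $\int g\, d\sigma = 0$ for every choice of $z_3, \ldots, z_k$, which is precisely where the inner-product dependence of $G$ together with rotation invariance of $\sigma$ is used.
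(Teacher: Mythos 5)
Your proof is correct and follows exactly the argument the paper has in mind: the paper states this lemma without proof, treating it as the immediate $l=0$ analogue of Corollary \ref{cor:MoreGenPosDef}, where the potential function factors as $g(x)g(y)$ so that $I_{U_H^{z_3,\ldots,z_k}}(\nu)$ is a perfect square, and the vanishing mean of $G$ in its first variable (understood as holding for every choice of the remaining arguments) forces $I_H(\sigma)=0$, whence Proposition \ref{prop:kPosDef and Energy Zero is Min} finishes the job. The only caveat is your parenthetical rotation argument: it is unnecessary if the hypothesis is read as quantified over all $\eta_2,\ldots,\eta_{k-1}$ (the intended reading), and would not suffice if the hypothesis held for only one tuple, since rotations only reach tuples with the same Gram matrix.
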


The formulation of $T$ and $H$ in the corollary and lemma, and the fact that the sum of $k$-positive definite kernels minimized by $\sigma$ is a $k$-positive definite kernel minimized by $\sigma$, allows us to now recover Theorem \ref{thm:SemiDefMin}. In \cite{BV}, the authors created matrices $Y_l^d$ of polynomials, and then took the trace of the product of a positive semidefinite matrix and a $Y_l^d$. When $l = 0$, this would lead to a sum of kernels of the form \eqref{eq:l=0PosDef}, and for $l > 0$ this would lead to a sum of kernels of the form \eqref{eq:MorGenPosDef}.

By combining  Lemmas \ref{lem:Schur's Lemma}, \ref{lem:Schur's Lemma2}, \ref{lem:kPosDef to nPosDef}, and \ref{lem:l=0PosDef} with Corollary \ref{cor:MoreGenPosDef} we can now construct a wide range of rotationally-invariant $k$-positive definite kernels whose energies are minimized by $\sigma$ from the kernels $Q_{n,l}^d$'s for $n < k$. In particular, we can construct kernels which are not constant when $x_3, \ldots, x_n$ are linearly dependent, unlike $Q_{k,l}^d$.

\section{Maximizing the  integral of $A^2$ on the sphere}\label{sec:A^2 on Sphere}

We now turn to the last main results of the paper. As an analogue of the result by Cahill and Casazza (Theorem \ref{thm:volume-gen}), we solve the optimization problem for $A^2$, the square of the $(k-1)$-dimensional volume of the simplex,  for an arbitrary number of inputs $3\le k \le d+1$. We have already proved some partial cases of the theorem below: Theorem \ref{thm: triangle area squared max} (for the case $k=3$ and $d\ge 2$, i.e. the area of the triangle)  and Theorem \ref{thm:A^2maxGen} (for full-dimensional simplices, i.e. $k=d+1\ge 3$).  We would like to point out, that the latter theorem applies to measures on $\mathbb R^d$. The following  theorem, while  restricted to the sphere,  covers the whole range $3\le k \le d+1$.

\begin{theorem}\label{thm:area-gen} Let $d\ge 2$ and $3\leq k\leq d+1$.  Let $A(x_1, \ldots, x_k)$ be the $(k-1)$-dimensional Euclidean volume of a simplex with vertices $x_1, \ldots, x_k \in \mathbb{S}^{d-1}$.  Then the  set of maximizing measures of $I_{A^2}$ in $\mathcal{P}(\mathbb S^{d-1})$ is the set of balanced isotropic measures on $\mathbb S^{d-1}$. In particular, the uniform surface measure $\sigma$ maximizes $I_{A^2}$. The value of the maximum is $\frac {k}{(k-1)! d^{k-1}}\binom{d}{k-1}$.
\end{theorem}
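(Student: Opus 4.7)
The plan is to decompose the symmetric kernel $A^2$ in the form $A^2 = C - \Psi$, where $C$ is a constant and $\Psi$ is a sum of symmetrized $k$-positive definite kernels (built from the $Q_{k,l}^d$'s of Section~\ref{sec:k-point}) whose $\mu^k$-integral is nonnegative and vanishes precisely on balanced isotropic $\mu$. The starting point is the lifting identity
\[
\bigl((k-1)!\bigr)^{2}\, A^{2}(x_1,\ldots,x_k) \;=\; \det\!\bigl[\,1+\langle x_i,x_j\rangle\,\bigr]_{i,j=1}^{k},
\]
obtained by recognizing $[1+u_{ij}]$ as the Gram matrix of the lifted vectors $\tilde x_i=(1,x_i)\in\mathbb{R}^{d+1}$, which span a $k$-parallelepiped of volume $(k-1)!\,A$ (the simplex $0,\tilde x_1,\ldots,\tilde x_k$ has base of area $A$ lying in the hyperplane $\{y_0=1\}$ and unit height from the origin).

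\medskip

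Applying multilinearity of the determinant row-by-row, and noting that any expansion with two or more all-ones rows has determinant zero, one obtains
\[
\bigl((k-1)!\bigr)^{2} A^{2} \;=\; \det U \;+\; \mathbf{1}^{T}\mathrm{adj}(U)\,\mathbf{1},
\]
where $U=[u_{ij}]$ is the Gram matrix of $x_1,\ldots,x_k$. The first summand equals $V^{2}(x_1,\ldots,x_k)$ and is controlled by the Cahill--Casazza theorem (Theorem~\ref{thm:volume-gen}), which gives $I_{\det U}(\mu)\le \frac{k!}{d^{k}}\binom{d}{k}$ with equality exactly for isotropic $\mu$. The diagonal part $\sum_i\mathrm{adj}(U)_{ii}$ is the sum of $V^{2}$ over $(k-1)$-subsets of the vectors, again handled by Theorem~\ref{thm:volume-gen}, whereas the off-diagonal cofactors $\mathrm{adj}(U)_{ij}$ (for $i\neq j$) are the terms that encode the balance condition, since each is linear (rather than quadratic) in individual vectors~$x_i$.

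\medskip

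To control these off-diagonal cofactors, I would show that, after symmetrization over permutations (Proposition~\ref{prop:Min is Min for Symmetrization}), they assemble into a nonnegative combination of the $k$-positive definite kernels $Q_{n,1}^{d}$ from Lemma~\ref{lem:Musin-1}, applied to sub-tuples of inputs, possibly multiplied by test functions as in Corollary~\ref{cor:MoreGenPosDef}. Each such kernel has nonnegative energy with zero value at $\sigma$, and more refinedly vanishes against any balanced measure, because the orthogonal projections $y_1,y_2$ appearing in the definition of $Q_{n,1}^{d}$ average out when the first moment of $\mu$ is zero. Combining this with the Cahill--Casazza bounds for the $V^{2}$-type pieces yields $I_{A^{2}}(\mu)\le I_{A^{2}}(\sigma)$, with equality iff $\mu$ is simultaneously isotropic (to saturate every $V^{2}$-bound) and balanced (to annihilate the off-diagonal cofactors). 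Finally, $I_{A^{2}}(\sigma)=\frac{k}{(k-1)!\,d^{k-1}}\binom{d}{k-1}$ follows by integrating the determinant formula term-by-term against $\sigma^{k}$ using the elementary moments $\int u_{ij}\,d\sigma=0$ and $\int u_{ij}^{2}\,d\sigma=1/d$ for $i\neq j$.

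\medskip

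The main obstacle is the explicit identification in the last step: rewriting the symmetrization of the off-diagonal cofactors as a combination of $k$-positive definite kernels whose energy is transparently zero on balanced isotropic measures. For $k=3$ this reduces to the elementary decomposition in the proof of Theorem~\ref{thm: triangle area squared max}, where the cofactors collapse into a multiple of $S_{1,0,0}^{d}$ plus a quadratic frame-potential term; for larger $k$ one needs the full construction of Section~\ref{sec:k-point} together with careful combinatorial bookkeeping of the various $Q_{n,l}^{d}$'s that appear, which is presumably why the theorem is placed immediately after the self-contained construction of these kernels.
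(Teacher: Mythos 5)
Your overall strategy---expand a bordered/lifted Gram determinant into a sum of $(k-1)$-point $V^2$ kernels (handled by Theorem \ref{thm:volume-gen}) plus off-diagonal cofactors that should be negatives of the $k$-positive definite kernels of Section \ref{sec:k-point}---is exactly the paper's, but your starting identity is false for $3\le k\le d$, and this is a substantive error rather than a typo. You claim $((k-1)!)^2A^2=\det\bigl[1+\langle x_i,x_j\rangle\bigr]=\det U+\mathbf{1}^T\adj(U)\,\mathbf{1}$ via the lift $\tilde x_i=(1,x_i)$. The lifted simplex does lie in the hyperplane $\{y_0=1\}$ and has $(k-1)$-volume $A$, but the height of the cone over it with apex at the origin is the distance from the origin to the \emph{affine hull} of $\tilde x_1,\ldots,\tilde x_k$, which equals $\sqrt{1+\dist\bigl(0,\operatorname{aff}(x_1,\ldots,x_k)\bigr)^2}$, not $1$. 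The correct identity is $\det\bigl[1+u_{ij}\bigr]=V^2(x_1,\ldots,x_k)+((k-1)!)^2A^2$, i.e. $((k-1)!)^2A^2=\mathbf{1}^T\adj(U)\,\mathbf{1}$ \emph{without} the $\det U$ term, which is precisely the paper's Lemma \ref{lem:A-formula}. (Sanity check with $k=3$ and $x_1,x_2,x_3$ orthonormal: $((k-1)!)^2A^2=4\cdot\tfrac34=3$, while $\det(I_3+J_3)=4$ and $\det U=1$.) Your identity happens to hold only in the case $k=d+1$, where $\det U\equiv 0$ by linear dependence. Carried through, the spurious extra summand $V^2(x_1,\ldots,x_k)$ would make your term-by-term integration against $\sigma^k$ produce $\frac{1}{((k-1)!)^2}\,\frac{k!}{d^k}\binom{d}{k}+\frac{k}{(k-1)!\,d^{k-1}}\binom{d}{k-1}$ rather than the stated maximum, so the plan as written cannot close for $k\le d$.

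Beyond this, the step you yourself flag as ``the main obstacle'' is exactly the content of the paper's proof and is left unproved: you say the symmetrized off-diagonal cofactors ``assemble into a nonnegative combination'' of the $Q_{n,1}^d$'s, ``possibly multiplied by test functions as in Corollary \ref{cor:MoreGenPosDef}.'' In fact no combination and no auxiliary factors are needed: the paper shows by a careful double cofactor expansion that for each $i\neq j$ one has the exact identity $(-1)^{i+j}\det(U_{\{i\},\{j\}})=-Q_{k,1}^d(x_i,x_j,x_{l_1},\ldots,x_{l_{k-2}})$ with $\{l_1,\ldots,l_{k-2}\}=\{1,\ldots,k\}\setminus\{i,j\}$, using the closed form $Q_{k,1}^d=\det(W)u_{1,2}-w_1^T\adj(W)w_2$ of Lemma \ref{lem:Musin-1}. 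Your heuristic for why balanced measures kill these terms (the projection of the first moment onto $X^\perp$ vanishes) is correct, but the converse---that every maximizer must be balanced---additionally uses that an isotropic measure has full-dimensional support, so that the intersection of the subspaces $X$ spanned by $(k-2)$-tuples from the support is $\{0\}$; this is asserted rather than argued in your proposal.
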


\begin{proof}
Let $U$ be the Gram matrix of vectors $\{x_1,\ldots,x_k\}\subset \mathbb S^{d-1}$ with entries $u_{i,j}$, i.e. $\langle x_i,x_j \rangle = u_{i,j}$ for $1\leq i,j\leq k$. For $I,J\subseteq \{1,\ldots,k\}$, we denote by $U_{I,J}$ the submatrix of $U$ obtained by deleting rows with numbers from $I$ and columns with numbers from $J$.  By Lemma \ref{lem:A-formula}, whose proof is postponed to the Appendix, $$((k-1)!)^2 A^2 = - \det \begin{pmatrix}U&\mathbf{1}\\ \mathbf{1}^T&0\end{pmatrix}.$$ We expand the determinant by choosing, for each $i,j \in \{ 1,\ldots, k\}$, the elements in the last column and $i^{th}$ row and the $j^{th}$ column and last row. We treat the cases $i=j$ and $i\neq j$ separately.

\begin{align*}
((k-1)!)^2 A^2 & = - \sum\limits_{i=1}^k (-1)^{k+1+i+k+i} \det(U_{\{i\},\{i\}}) - \sum\limits_{i\neq j} (-1)^{k + 1 + i+ k +j} \det(U_{\{i\},\{j\}})\\
&=\sum\limits_{i=1}^k \det(U_{\{i\},\{i\}}) + \sum\limits_{i\neq j} (-1)^{i+j} \det(U_{\{i\},\{j\}})
\end{align*}

For each $i \in \{ 1, \ldots, k \}$, $\det(U_{\{i\},\{i\}})$ is the $(k-1)$-point kernel $V^2 (x_1,\dots, x_{i-1},x_{i+1},\dots,x_n)$ from Theorem \ref{thm:volume-gen}. Subsequently, Theorem \ref{thm:volume-gen} implies that the energy integral  for the kernel defined by the first sum is not greater than $k \frac {(k-1)!} {d^{k-1}} \binom{d}{k-1}$.

It is now sufficient to show that the contribution of the second sum is nonpositive. Let us fix $i,j \in \{1, \ldots, k\}$, with $i \neq j$, and denote $U_{\{i,j\},\{i,j\}}$ by $U'$. We expand $\det(U_{\{i\},\{j\}})$ by row $j$ of $U$ and column $i$ of $U$ taking an element $u_{j,m}$, $m\neq j$, from the row and $u_{n,i}$, $n\neq i$, from the column, respectively. 

If $m = i$ and $n = j$, then we take $u_{j,i}$ both for the row and the column expansion. The contribution of this case to $\det(U_{\{i \}, \{ j\}})$ the sum is then $(-1)^{i+j -1} u_{j,i} \det(U')$. 

Let us now consider the case where $m\neq i$ and $n\neq j$. Without loss of generality, let us assume that $i < j$ (the case of $i > j$ is similar). Let $n'$ be the position of row $n$ of $U$ after rows $i$ and $j$ are deleted, i.e. $n'=n$ if $n<i$, $n'=n-1$ if $i < n < j$, and $n' = n-2$ if $j < n$. Similarly we define $m'=m$ if $m<i$, $m'=m-1$ if $i < m < j$, and $m' = m-2$ if $j < m$. This guarantees that  $U_{\{i,j,n\},\{i,j,m\} } = U'_{ \{n'\},\{m'\}}$. A careful examination of the signs  shows that the contribution of this expansion in the sum is then
\begin{align*}
(-1)^{i +n' + j + m'} u_{n,i} u_{j, m} \det(U'_{\{n'\},\{m'\}}) &= (-1)^{p} u_{n,i} u_{j, m} \det(U'_{\{n'\},\{m'\}}) \\ &= (-1)^{ p} u_{n,i} u_{j, m} \det(U_{\{i, j, n\},\{i, j, m\}})
\end{align*}
where
$$p = i + n + j + m + \frac{\sgn(n-i) + \sgn(n-j) + \sgn(m-i) + \sgn(m-j)}{2} - 2.$$

Overall, we have
\begin{align*}
(-1)^{i+j}\det(U_{\{i\},\{j\}}) & = (-1)^{2i + 2j -1}u_{j,i} \det(U') + \sum_{\substack{1\leq m,n\leq k \\ m,n\notin \{i,j\}}} (-1)^{2i + 2j + m' + n'} u_{n,i} u_{j,m}\det(U'_{\{n'\},\{m'\}}) \\
& = -u_{j,i} \det(U') + \sum_{\substack{1\leq m,n\leq k\\ m,n\neq i,j}} (-1)^{m'+n'}u_{n,i} u_{j,m} \det(U'_{\{n'\},\{m'\}}) \\
& = -(u_{j,i} \det(U') - {u_i'}^T \operatorname{adj}(U') u_j')\\
& = -(u_{j,i} \det(U') - {u_i'}^T \adj(U') u_j')\\
& = - Q_{k,1}^d( x_i, x_j, x_{l_1}, \ldots, x_{l_{k-2}}),
\end{align*}
where $u_i'=(u_{1,i},\ldots,u_{k,i})^T$ with the first index running through all $n\neq i,j$, $u_j'=(u_{j,1},\ldots,u_{j,k})^T$ with the second index running through all $m\neq i,j$, and $\{l_1, \ldots, l_{k-2} \} = \{ 1, \ldots, k \} \setminus \{i, j \}$. For the last identity above, see Lemma \ref{lem:Musin-1}. From Theorem \ref{thm:Qdef} (or Lemma \ref{lem:Musin-1} specifically for this case), we know that $-Q_{k,1}^d$ is $k$-negative definite, so its contribution to $I_{A^2}$, and therefore the contribution of
$$K (x_1, \ldots, x_k) = \sum_{i \neq j} (-1)^{i+j} \det(U_{\{i \}, \{j\}}) = - \frac{\binom{k}{2}}{k!} \sum_{\pi} Q_{k,1}^d( x_{\pi(1)}, \ldots, x_{\pi(k)}),$$
to $I_{A^2}$ is nonpositive, so $I_{A^2} \leq \frac{k}{(k-1)! d^{k-1}} \binom{d}{k-1}$.

It remains to find out which measures maximize $I_{A^2}$. Due to Theorem \ref{thm:volume-gen}, any maximizing measure $\mu$ must be isotropic. In order to find measures vanishing on the second part we need to return to Lemma \ref{lem:Musin-1}. The necessary and sufficient condition for vanishing on $Q_{k,1}^d(x_1, \ldots, x_k)$ is the following. For any linearly independent $x_3, \ldots, x_k$ from the support of $\mu$ and the linear space $X$ generated by them, the projection of $\mu$ onto $X^{\perp}$ must be balanced. In other words, the center of mass of $\mu$ must belong to $X$. An isotropic measure must be full-dimensional so there must exist $d$ linearly independent vectors from $\operatorname{supp}(\mu)$. The intersection of all linear spaces generated by any $(k-2)$ of these $d$ vectors is only the origin so the center of mass of $\mu$ must be at the origin. Clearly, balanced isotropic measures attain the found maximum.
\end{proof}

\begin{remark}
In the last part of the proof we could also have shown that $\sigma$ is a maximizer, then noted that the potential is a polynomial of degree at most two in any of its variables, with some parts being of degree one, meaning that balanced isotropic measures are the maximizers, since they yield the same value of energy as $\sigma$ (this a direct analogy to spherical $2$-designs). \end{remark}

We note that in the case that $k = 2$, $A(x,y)$ is simply the Euclidean distance between $x$ and $y$. If we were to split $\| x-y\|^2$ into a linear part and a ``volume" part as in the proof, then the volume is simply the distance from the origin to a point on the circle, which is always $1$. Thus, in that particular case, only the linear part matters, so maximizers of $I_{A^2}$ on the sphere are all balanced measures, as shown in \cite{Bj}. The case of $k = 3$ was handled by Theorem \ref{thm: triangle area squared max}, where in this case $Q_{k,1}^d = Y_{1,0,0}$ and the $(k-1)$-volume squared function is $V^2(x,y) = 1- \langle x, y \rangle^2$, as discussed in Section \ref{sec:k=2}.

Finally, we note that despite having this result for $A^2$ on the sphere, Corollary \ref{cor:A^s for s>2} does not hold if $A$ has $k < d+1$ inputs. As $s \rightarrow \infty$, we should expect the maximizer of $I_{A^s}$ to be supported on some set such that $A$ takes only its minimum (zero) and maximum values. This will be the set of vertices of a regular $(k-1)$-simplex on some $(k-2)$-dimensional subsphere. Such a measure is not isotropic on $\mathbb{S}^{d-1}$ for $k < d+1$, and so we can also not use the same proof method  to determine maximizers. We  do, however, conjecture that the maximizers of $I_{A^s}$ are discrete when $s>2$,  for all $3\le k \le d+1$. 

\section{Acknowledgments}

We would like to thank Danylo Radchenko  and David de Laat for fruitful discussions and useful suggestions.
All of the authors express gratitude to ICERM for hospitality and support during the Collaborate{@}ICERM  program  in  2021. D.~Bilyk has been supported by the NSF grant DMS-2054606 and Simons  Collaboration Grant 712810. D.~Ferizovi\'{c} thankfully acknowledges support by the Methusalem grant of the Flemish Government. A.~Glazyrin was supported by the NSF grant DMS-2054536. R.W.~Matzke was supported by the Doctoral Dissertation Fellowship of the University of Minnesota, the Austrian Science Fund FWF project F5503 part of the Special Research Program (SFB) ``Quasi-Monte Carlo Methods: Theory and Applications", and NSF Postdoctoral Fellowship Grant 2202877. O.~Vlasiuk was supported by an AMS-Simons Travel Grant.

\section{Appendix: Expressing \texorpdfstring{$A^2$}{A2} through Gram determinants}\label{sec:Appen A^2}

Let $U$ be the Gram matrix of vectors $\{x_1,\ldots,x_k\}\subset \mathbb S^{d-1}$ with entries $u_{i,j}$, i.e. $\langle x_i,x_j \rangle = u_{i,j}$ for $1\leq i,j\leq k$. The following lemma provides a linear-algebraic description of $A^2$.

\begin{lemma}\label{lem:A-formula}
$$A^2 (x_1,\ldots,x_k) = -\frac 1 {((k-1)!)^2} \det \begin{pmatrix}U&\mathbf{1}\\ \mathbf{1}^T&0\end{pmatrix},$$
where $\mathbf{1}$ is the column vector of $k$ ones.
\end{lemma}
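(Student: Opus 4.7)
The plan is to relate the bordered matrix $\begin{pmatrix} U & \mathbf{1} \\ \mathbf{1}^T & 0 \end{pmatrix}$ to the Gram determinant of the edge vectors $x_2 - x_1, \ldots, x_k - x_1$ via elementary row/column operations, and then invoke the classical formula expressing the squared volume of a simplex as a Gram determinant.

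First I would recall (or briefly reprove) the standard identity
\[
A^2(x_1,\ldots,x_k) = \frac{1}{((k-1)!)^2}\,\det G,
\]
where $G$ is the $(k-1)\times(k-1)$ Gram matrix with entries $G_{i-1,j-1} = \langle x_i - x_1, x_j - x_1\rangle$ for $i,j \in \{2,\ldots,k\}$. This follows because the $(k-1)$-dimensional parallelepiped spanned by the edge vectors $x_j - x_1$ has volume $\sqrt{\det G}$, and the simplex with vertices $x_1,\ldots,x_k$ is $\frac{1}{(k-1)!}$ times that volume.

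Next I would transform the $(k+1)\times(k+1)$ bordered matrix
\[
B := \begin{pmatrix} U & \mathbf{1} \\ \mathbf{1}^T & 0 \end{pmatrix}
\]
by, for each $i = 2,\ldots,k$, subtracting row $1$ from row $i$, and then, for each $j = 2,\ldots,k$, subtracting column $1$ from column $j$. Neither operation changes the determinant. The key observation is that the resulting $(i,j)$ entry for $i,j \in \{2,\ldots,k\}$ becomes $u_{i,j} - u_{1,j} - u_{i,1} + u_{1,1} = \langle x_i - x_1, x_j - x_1 \rangle = G_{i-1,j-1}$, the last column becomes $(1,0,\ldots,0,0)^T$, and the last row becomes $(1,0,\ldots,0,0)$. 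Expanding the determinant first along the last row (which has a single nonzero entry in position $(k+1,1)$ equal to $1$), and then along the last column of the resulting minor (again a single nonzero entry of $1$ in position $(1,k-1+1)$), leaves precisely the $(k-1)\times(k-1)$ Gram matrix $G$. Tracking the two cofactor signs $(-1)^{k+2}$ and $(-1)^{k+1}$ gives $\det B = -\det G$, which combined with the formula for $A^2$ yields the claim.

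The only real obstacle is bookkeeping: carefully verifying that the two triangular-shaped blocks of zeros appear as claimed after the row/column operations, and that the two successive cofactor expansions produce exactly the sign $-1$. No deep idea is required beyond the correspondence between the border row $\mathbf{1}^T$ (combined with the zero in the corner) and the act of ``subtracting row $1$ from the others," which converts inner products of $x_i$'s into inner products of differences $x_i - x_1$.
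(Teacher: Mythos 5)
Your proof is correct and follows essentially the same route as the paper's: both reduce the claim to the classical identity $A^2 = \frac{1}{((k-1)!)^2}\det G$ for the Gram matrix $G$ of the edge vectors $x_i - x_1$, and then relate $\det G$ to the bordered determinant $\det\begin{pmatrix}U&\mathbf{1}\\ \mathbf{1}^T&0\end{pmatrix}$ by elementary row/column operations. You simply run the manipulation in the opposite direction (reducing the bordered matrix to $G$ via two cofactor expansions rather than building the border around $G$), and your sign bookkeeping $(-1)^{k+2}\cdot(-1)^{k+1}=-1$ is right, as is the observation that the leftover nonzero entries in the first row and column are deleted by the two expansions.
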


\begin{proof}

$A^2$ can be found from the Gram matrix of the vectors $x_2-x_1, \ldots, x_k-x_1$.
\begin{align*}
A^2 (x_1,\ldots,x_k) &= \frac 1 {((k-1)!)^2} \det\begin{pmatrix}\langle x_2-x_1, x_2-x_1\rangle&\ldots&\langle x_2-x_1, x_k-x_1\rangle \\ \vdots & \ddots& \vdots \\ \langle x_k-x_1, x_2-x_1\rangle & \ldots & \langle x_k-x_1, x_k-x_1\rangle\end{pmatrix} \\
& = \frac 1 {((k-1)!)^2} \det\begin{pmatrix}2-2u_{1,2}&\ldots&1+u_{2,k} - u_{1,2} - u_{1,k} \\ \vdots & \ddots& \vdots \\ 1+u_{k,2} - u_{1,k} - u_{1,2}& \ldots & 2-2u_{1,k}\end{pmatrix} \\
& = \frac {-1} {((k-1)!)^2} \det\begin{pmatrix}0 & 0 & \ldots & 0 &1\\0 & 2-2u_{1,2}&\ldots&1+u_{2,k} - u_{1,2} - u_{1,k} & 0\\ \vdots & \vdots & \ddots& \vdots &\vdots\\0& 1+u_{k,2} - u_{1,k} - u_{1,2}& \ldots & 2-2u_{1,k}&0\\ 1 & 0 & \ldots & 0 & 0\end{pmatrix}\\
& = \frac {-1} {((k-1)!)^2} \det\begin{pmatrix}0 & u_{1,2} & \ldots & u_{1,k} &1\\u_{1,2} & 2-2u_{1,2}&\ldots&1+u_{2,k} - u_{1,2} - u_{1,k} & 0\\ \vdots & \vdots & \ddots& \vdots &\vdots\\u_{1,k}& 1+u_{k,2} - u_{1,k} - u_{1,2}& \ldots & 2-2u_{1,k}&0\\ 1 & 0 & \ldots & 0 & 0\end{pmatrix}.
\end{align*}

Note that in the third equality, we created a $(k+1) \times (k+1)$ matrix whose determinant is the negative of our original matrix, due to the only nonzero entries in the last row and column being the ones in the upper right and lower left corners. This also means that inserting $u_{1,j}$'s into the first row and column doesn't affect the determinant.

Now we add the  first row and column to all rows and columns except for the last ones.

$$A^2=-\frac 1 {((k-1)!)^2} \det\begin{pmatrix}0 & u_{1,2} & \ldots & u_{1,k} &1\\u_{1,2} & 2&\ldots&1+u_{2,k} & 1\\ \vdots & \vdots & \ddots& \vdots &\vdots\\u_{1,k}& 1+u_{k,2}& \ldots & 2&1\\ 1 & 1 & \ldots & 1 & 0\end{pmatrix}.$$

We subtract the last column from all columns except for the first one, then add the bottom row to the top row, and see that
\begin{align*}
A^2 & = -\frac 1 {((k-1)!)^2} \det\begin{pmatrix}0 & u_{1,2}-1 & \ldots & u_{1,k}-1 &1\\u_{1,2} & 1&\ldots&u_{2,k} & 1\\ \vdots & \vdots & \ddots& \vdots &\vdots\\u_{1,k}& u_{k,2}& \ldots & 1&1\\ 1 & 1 & \ldots & 1 & 0\end{pmatrix} \\
& = -\frac 1 {((k-1)!)^2} \det\begin{pmatrix}1 & u_{1,2}& \ldots & u_{1,k}&1\\u_{1,2} & 1&\ldots&u_{2,k} & 1\\ \vdots & \vdots & \ddots& \vdots &\vdots\\u_{1,k}& u_{k,2}& \ldots & 1&1\\ 1 & 1 & \ldots & 1 & 0\end{pmatrix}= -\frac 1 {((k-1)!)^2} \begin{pmatrix}U&\mathbf{1}\\ \mathbf{1}^T&0\end{pmatrix}.
\end{align*}

\end{proof}

\bibliographystyle{alpha}

\end{document}